\newtheorem{theorem}{Theorem}[section]
\newtheorem{lemma}[theorem]{Lemma}
\newtheorem{proposition}[theorem]{Proposition}
\newtheorem{corollary}[theorem]{Corollary}
\newenvironment{proof}[1][Proof]{\begin{trivlist}
\item[\hskip \labelsep {\bfseries #1}]}{\end{trivlist}}
\newenvironment{definition}[1][Definition]{\begin{trivlist}
\item[\hskip \labelsep {\bfseries #1}]}{\end{trivlist}}
\newenvironment{remark}[1][Remark]{\begin{trivlist}
\item[\hskip \labelsep {\bfseries #1}]}{\end{trivlist}}
\newcommand{\bea}{\begin{eqnarray*}}
\newcommand{\eea}{\end{eqnarray*}}
\newcommand{\bean}{\begin{eqnarray}}
\newcommand{\eean}{\end{eqnarray}}
\newcommand{\baln}{\begin{align}}
\newcommand{\ealn}{\end{align}}
\newcommand{\bdefi}{\begin{definition}}
\newcommand{\edefi}{\end{definition}}
\newcommand{\bi}{\begin{itemize}}
\newcommand{\ei}{\end{itemize}}
\newcommand{\benu}{\begin{enumerate}}
\newcommand{\eenu}{\end{enumerate}}
\newcommand{\E}{{\rm E}}
\newcommand{\V}{{\rm Var}}
\newcommand{\sg}{\Sigma}
\newcommand{\Esg}{\bbE_{\sg_0}} 
\newcommand{\what}{\widehat}
\newcommand{\sumij}{\sum_{i=1}^p \sum_{j=1}^p} 
\newcommand{\bfX}{{\bf X}}
\newcommand{\bbE}{\mathbb{E}}
\newcommand{\bbP}{\mathbb{P}} 
\newcommand{\bbR}{\mathbb{R}}
\newcommand{\cC}{\mathcal{C}}
\newcommand{\cP}{\mathcal{P}}
\newcommand{\cU}{\mathcal{U}}
\newcommand{\cX}{\mathcal{X}}
\newcommand{\lra}{\longrightarrow}
\newcommand{\ra}{\rightarrow}
\newcommand{\calC}{\mathcal{C}}
\newcommand{\calD}{\mathcal{D}}
\newcommand{\calL}{\mathcal{L}}
\newcommand{\calR}{\mathcal{R}}
\begin{document}
	
\title{Optimal Bayesian Minimax Rates for Unconstrained Large Covariance Matrices}
\author[1]{Kyoungjae Lee}
\author[2]{Jaeyong Lee}
\affil[1]{Department of Applied and Computational Mathematics and Statistics, The University of Notre Dame}
\affil[2]{Department of Statistics, Seoul National University}

\maketitle

	\begin{abstract}
		We obtain the optimal Bayesian minimax rate for the unconstrained large covariance matrix of multivariate normal sample with mean zero,  when both the sample size, $n$, and the dimension, $p$, of the covariance matrix tend to infinity. 
		Traditionally the posterior convergence rate is used to compare the frequentist asymptotic performance of priors, but defining the optimality with it is elusive. We propose a new decision theoretic framework for prior selection and define {\it Bayesian minimax rate}.
		Under the proposed framework, we obtain the optimal Bayesian minimax rate for the spectral norm for all rates of $p$. We also considered Frobenius norm, Bregman divergence and squared log-determinant loss and obtain the optimal Bayesian minimax rate under certain rate conditions on $p$. A simulation study is conducted to support the theoretical results. 
	\end{abstract}
	
Key words: Bayesian minimax rate; Convergence rate; Decision theoretic prior selection; Unconstrained covariance.

\section{Introduction}
Estimating covariance matrix  plays a fundamental role in multivariate data analysis. Many statistical methods in multivariate data analysis such as the principle component analysis, canonical correlation analysis, linear and quadratic discriminant analysis require the estimated  covariance matrix as the starting point of the analysis. 
In the risk management and the longitudinal data analysis, the covariance matrix estimation is a crucial part of the analysis. 
The log-determinant of covariance matrix is used for constructing hypothesis test or quadratic discriminant analysis \cite{anderson03}.

Suppose we observe a random sample $\bfX_n = (X_1,\ldots, X_n), ~X_i \in \bbR^p,~ i=1,\ldots,n,$ from the $p$-dimensional normal distribution with mean zero and covariance matrix $\Sigma$, i.e.
\bea
X_1,\ldots, X_n \mid \Sigma &\overset{iid}{\sim}& N_p (0, \Sigma ).
\eea
We assume the zero mean and focus on the covariance matrix. 

With advance of technology, data arising from various areas such as climate prediction, image processing, gene association study, and proteomics, are often high dimensional. 
In such high dimensional settings, it is often natural to assume that the dimension of the variable $p$ tends to infinity as the sample size $n$ gets larger, i.e. $p = p_n \lra \infty$ as $n \lra \infty$. 
This assumption can be justified as follows.   
First, when $p$ is large in comparison with $n$, often the limiting scenario with $p$ tending to infinity approximates closer to the reality than that with $p$ fixed. 
Second, in many cases we can postulate the reality is infinitely complex and involves infinitely many variables, and with limited resources and time, we can collect only a portion of variables and observations. If we have more resources to collect more data, it is natural to collect more observations as well as more variables, i.e. to increase both $n$ and $p$.

When $p$ tends to infinity as $n \lra \infty$, the traditional covariance estimator is not optimal \cite{johnstone2009consistency}.
The sparsity or bandable assumptions on large matrices have been used frequently in the literature.  
Many researchers have studied the large sample properties under the restrictive matrix classes.
\cite{bickel08b} considered the bandable covariance/precision classes and studied the convergence rate of banding estimator on those classes.
\cite{verzelen2010adaptive} derived the convergence rate for precision matrices via sparse Cholesky factors and showed that it is the minimax rate under the Frobenius norm.
In addition, the minimax convergence rates for the sparse or bandable covariance matrices were established by \cite{cai2010optimal}, \cite{cai2012minimax, cai2012optimal} and \cite{xue2013minimax}.
For a comprehensive review on the convergence rate for the covariance and precision matrices, see \cite{cai2016review}.

The posterior convergence rate has been investigated by \cite{pati14}, \cite{banerjee2014}, and \cite{gao2015rate}.
\cite{pati14} showed that their continuous shrinkage priors are optimal for the sparse covariance estimation under the spectral norm in the sense that the posterior convergence rate is quite close to the frequentist minimax rate. They achieved a nearly minimax rate upto a $\sqrt{\log n}$ term under the spectral norm and sparse assumption even when $n = o(p)$. 
\cite{banerjee2014} considered Bayesian banded precision matrix estimation using graphical models. They obtained the posterior convergence rate of the precision matrix under matrix $\ell_\infty$ norm when $\log p = o(n)$.  
\cite{gao2015rate} developed a prior distribution for the sparse PCA and showed that it achieves the minimax rate under the Frobenius norm. They also derived the posterior convergence rate under the spectral norm.

Most of the previous works on the Bayesian estimation of large covariance matrix concentrate on the constrained covariance or precision matrix. To the best of our knowledge, only \cite{gao2016bernstein} considered asymptotic results for large unconstrained covariance matrix under the ``large p and large n" setting. However, they attained the Bernstein-von Mises theorems under somewhat restrictive assumptions on the dimension $p$.


In this paper, we fill the gap in the literature. 
At first, we propose a new decision theoretic framework to define Bayesian minimax rate. 
The posterior convergence rate is the primary concept when the asymptotic optimality is studied in the Bayesian sense. But it is not completely satisfactory.
The following is a quote from \cite{ghosal2017fundamentals} which they write just after defining the posterior convergence rate. 
\begin{quote}\normalsize
	`We defined ``a'' rather than {\it the} rate of contraction, and hence logically any rate slower than a contraction rate is also a contraction rate. Naturally we are interested in a fastest decreasing sequence $\epsilon_n$, but in general this may not exist or may be hard to establish. Thus our rate is an upper bound for a targeted rate, and generally we are happy if our rate is equal to or close to an ``optimal'' rate. With an abuse of terminology we often make statements like ``$\epsilon_n$ is {\it the} rate of contraction.'' ' 
\end{quote}
In the proposed new decision theoretic framework, a probability measure on the parameter space is an action and a prior is a decision rule for it gives a probability measure (the posterior) for a given data set. 
In this setup, we define the convergence rate and the Bayesian minimax rate. 

We investigate the Bayesian minimax rates for unconstrained large covariance matrix. We consider four losses for the covariance inference: spectral norm, Frobenius norm, Bregman divergence and squared log-determinant loss. 
For the spectral norm, we have the complete result of the Bayesian minimax rate. We show that the Bayesian minimax rate is $\min(p/n, 1)$ for all rates of $p$. 
For the Frobenius norm and Bregman divergence, we show the Bayesian minimax lower bound is $p \cdot \min(p, \sqrt{n})/n$ for all rates of $p$, but obtained the upper bound under the constraint $p \leq \sqrt{n}$. Thus, under the condition $p \leq \sqrt{n}$, the Bayesian minimax rate is $p^2/n.$ We also show that the Bayesian minimax rate under the squared log-determinant loss is $p/n$ when $p = o(n)$.

The rest of the paper is organized as follows. In section \ref{prel}, we define the model,  the covariance classes we consider, and introduce some notations. We propose the new  decision theoretical framework and define the Bayesian minimax rate. The Bayesian minimax rates under the spectral norm, the Frobenious norm, the Bregman matrix divergence, and the squared log-determinant loss are presented  in section \ref{main}. A simulation study is given in section \ref{simul}. The discussion is given in section \ref{disc}, and the proofs are given in Supplementary Material (\cite{lee2017supp}).

\section{Preliminaries}\label{prel}
\subsection{The Model and the Inverse-Wishart Prior}
Suppose we observe a random sample from the $p$-dimensional normal distribution
\bean\label{model}
X_1, \cdots, X_n \mid  \sg_n &\overset{iid}{\sim}& N_p (0, \sg_n), 
\eean
where $\sg_n$ is a $p\times p$ positive definite matrix, and $p$ is a function of $n$ such that $p=p_n \lra \infty$ as $n \lra \infty$. The true value of the covariance matrix is denoted by $\sg_0$ or $\sg_{0n}$, which is dependent on $n$. 

For the prior of the covariance matrix $\sg_n$ in model \eqref{model}, we  consider the inverse-Wishart prior 
\bean\label{prior}
\sg_n &\sim& IW_p(\nu_n , A_n),
\eean
where $\nu_n > p - 1$, $A_n$ is a $p \times p$ positive definite matrix for a proper prior. The mean of $\sg_n$ is $A_n/(\nu_n -p -1)$.
The condition $\nu_n >p-1$ is needed for the distribution to have a density in the space of  $p\times p$ positive definite matrices. If $\nu_n$ is an integer with $\nu_n \le p-1$, \eqref{prior} defines a singular distribution on the space of $p\times p$ positive semidefinite matrices \cite{uhlig1994}.

We also consider the truncated inverse-Wishart prior. The inverse-Wishart prior with parameter $\nu$ and $A$ whose  eigenvalues are restricted in $[K_1,K_2]$ with $0 < K_1 < K_2$ is denoted by $IW_p(\nu, A, K_1, K_2)$. The truncated inverse-Wishart prior was adopted for technical reason. By Lemma \ref{BequiF}, to connect the Frobenius norm with Bregman matrix divergence, the eigenvalues of argument matrices have to be bounded. The truncated inverse-Wishart prior guarantees that the posterior covariance matrix has bounded eigenvalues.

\subsection{Matrix Norms and Notations}\label{subsecMN}
We define the spectral norm (or matrix $\ell_2$ norm) for matrices by
\bea
\| A \| := \sup_{\|x\|_2 = 1} \| Ax \|_2,
\eea
where $\| \cdot \|_2$ denotes the vector $\ell_2$ norm defined by $\| x \|_2 := ( \sum_{i=1}^p x_i^2 )^{1/2}$, $x = (x_1,\ldots, x_p)^T \in \bbR^p$ and $A$ is $p \times p$ matrix. 
The spectral norm is the same as $\sqrt{\lambda_{max}(A^TA)}$ or $\lambda_{max}(A)$ if $A$ is symmetric,  where $\lambda_{max}(B)$ denotes  the largest eigenvalue of  $B$.

The Frobenius norm is defined by
\bea
\| A \|_F := \left( \sum_{i=1}^p \sum_{j=1}^p a_{ij}^2 \right)^{\frac{1}{2}}, 
\eea
where $A = (a_{ij})$ is a $p \times p$ matrix. It is the same as  $\sqrt{tr(A^TA)}$, where $tr(B)$ denotes  the trace of $B$. The  Frobenius norm is the vector $\ell_2$ norm with $p \times p$ matrices treated as $p^2$-dimensional vectors.

The Bregman divergence \cite{bregman67} is originally defined for vectors, but it can be extended to the real symmetric matrices.  Let $\phi$  be a differentiable and strictly convex function that maps real symmetric $p\times p$ matrices to $\bbR$.  
The Bregman divergence with $\phi$ between  two real symmetric matrices is defined as
\bea
D_\phi (A,B) := \phi (A) - \phi(B) - tr [ (\nabla \phi(B))^T (A-B) ] ,
\eea
where $A$ and $B$ are real symmetric matrices and $\nabla \phi$ is the gradient of $\phi$, i.e., $\nabla \phi(B) = (\partial \phi(B)/\partial B_{i,j} )$.

In this paper, we consider a class of $\phi$ such that $\phi(X) = \sum_{i=1}^p \varphi (\lambda_i)$ where $\varphi$ is a differentiable and strictly convex real-valued function and $\lambda_i$'s are the eigenvalues of $A$. Furthermore, we assume that  $\varphi$ satisfies the following properties for some constant $\tau_1>0$:
\begin{itemize} 
	\item[(i)] $\varphi$ is a twice differentiable and strictly convex function over $\lambda \in (\tau_1,\infty)$; 
	\item[(ii)]  there exist some constants $C>0$ and $r\in \bbR$ such that $|\varphi(\lambda)| \le C \lambda^r$ for all $\lambda \in (\tau_1,\infty)$; and
	\item[(iii)] for any positive constants $\tau >\tau_1$, there exist some positive constants $M_L$ and $M_U$ such that $M_L \le \varphi''(\lambda) \le M_U$ for all $\lambda \in [\tau_1, \tau]$. 
\end{itemize}
The above class of Bregman matrix divergences includes the squared Frobenius norm, von Neumann divergence and Stein's loss. For their use in statistics and mathematics, see \cite{cai2012optimal}, \cite{dhillon07} and \cite{kulis09}.

If $\varphi(\lambda) = \lambda^2$, the Bregman divergence is the squared Frobenius norm $D_\phi (A,B) = \| A-B \|_F^2$. 
If $\varphi(\lambda) = \lambda \log \lambda - \lambda$,  it is the  von Neumann divergence $ D_\phi(A,B) = tr\left( A\log A - A \log B - A + B \right), $ where $\log A$ is the matrix logarithm, i.e., $A=VDV^T$ is mapped to $\log A = V \log D V^T$. Here, $D=diag(d_i)$ is a $p\times p$ diagonal matrix where $d_i$ is the $i$th eigenvalue of $A$, and $V= [V_1, \cdots, V_p]$ is a $p\times p$ orthogonal matrix where $V_i$ is an eigenvector of $A$ corresponding to the eigenvalue $d_i$. 
If $\varphi(\lambda) = -\log \lambda$, the Bregman divergence is the Stein's loss $D_\phi (A,B) = tr(A B^{-1}) - \log \det(AB^{-1}) -p.$ The Stein's loss is the Kullback-Leibler divergence between two multivariate normal distributions with means zero and covariance matrices $A$ and $B$, respectively.

Finally, we introduce some notations for asymptotic analysis which will be used subsequently. 
For any positive sequences $a_n$ and $b_n$, we say $a_n \asymp b_n$ if there exist positive constants $c$ and $C$ such that $c \le a_n/b_n \le C$ for all sufficiently large $n$. We define
$a_n = o(b_n)$, if $a_n/b_n \ra 0$ as $n\ra \infty$ and 
$a_n = O(b_n)$, if there exist positive constants $N$ and $M$ such that $|a_n| \le M |b_n|$ for all $n \ge N$.
For any random variables $X_n$ and $X$, $X_n \overset{d}{\lra} X$ means the convergence in distribution.
For any real symmetric matrix $A$, $A>0$ ($A \geq 0$) means that the matrix $A$ is positive definite (nonnegative definite). 
We denote $\delta_{A}$ as the dirac measure at $A$.

\subsection{A Class of Covariance Matrices}
Let $\calC_p$ denote the set of all $p\times p$ covariance matrices.
For any positive constants $\tau, \tau_1$ and $\tau_2$, define the class of covariance matrix
\bea
\mathcal{C}(\tau) &=& \cC_p(\tau)\,\,:=\,\, \{ \sg\in \cC_p : \| \sg \| \le \tau , \sg \ge 0 \},\\
\cC(\tau_1,\tau_2) &=& \cC_p(\tau_1,\tau_2) \,\,:=\,\, \{ \sg\in \cC_p : \lambda_{min}(\sg)\ge \tau_1, \|\sg\|\le \tau_2 \},
\eea
where $\lambda_{min}(\sg)$ is the smallest eigenvalue of $\sg$.
Throughout the paper, we consider the model \eqref{model} and assume that the true covariance matrix belongs to $\cC(\tau)$ or $\mathcal{C}(\tau_1, \tau_2)$. 

Often the subgaussian property is used to relax the Gaussian distribution assumption. The distribution of random vector $X$ has subgaussian property with variance factor $\tau > 0$, if 
$$ P ( | v^T ( X - \E X ) | > t ) \le e^{-t^2/(2\tau)} $$
for all $t>0$ and $\| v \|=1$. 
The subgaussian property with variance factor $\tau$ implies $\| \V(X) \|\le 2\tau$.
In the literature, the subgaussian distribution is frequently used as a basic assumption, for examples, \cite{cai2010optimal}, \cite{cai2012minimax, cai2012optimal} and \cite{xue2013minimax}.  
If $X$ follows a multivariate normal distribution, $\| \sg \|\le \tau$ is a sufficient condition for $X$ to have the subgaussian property.

\subsection{Decision Theoretic Prior Selection}\label{subsec:decision}

Let $d(\sg, \sg')$ be a pseudo-metric that measures the discrepancy between two covariance matrices $\sg$ and $\sg'$. 
A sequence $\epsilon_n \lra 0$ is called a posterior convergence rate at the true parameter $\sg_0$ if for any $M_n \lra \infty$,
\bea
\pi( d(\sg, \sg_0) \ge M_n \epsilon_n \mid \bfX_n ) &\lra& 0 
\eea
in $\bbP_{\sg_0}$-probability as $n\lra \infty$.
The convergence rate is measured by the rate of $\epsilon_n$, which allows that the posterior contraction probability converges to zero in probability $\bbP_{\sg_0}$, where $\bbP_{\sg_0}$ is the distribution for random sample $(X_1,\ldots,X_n) \overset{iid}{\sim} N_p(0, \sg_0)$.
In the literature, the posterior is said to achieve the minimax rate if its convergence rate is the same as the frequentist minimax rate (\cite{pati14}; \cite{gao2015rate}; \cite{hoffmann2015adaptive}).
Since the posterior convergence rate cannot be faster than the frequentist minimax rate (\cite{hjort2010bayesian}), it is often called the optimal rate of posterior convergence (\cite{shen2015adaptive}; \cite{rockova2015bayesian}). 
However, its definition is  elusive as the quote from \cite{ghosal2017fundamentals} indicates.

As an alternative framework for the evaluation of the prior and the posterior, we take a frequentist decision theoretical approach. For each $n$, the parameter space is $\calC_p$ and the action space is the set of all probability measures on $\calC_p$. After the data $\bfX_n$ is collected, the posterior $\pi(\cdot | \bfX_n)$ is computed for the given prior $\pi$ and the posterior takes a value in the action space. 
In this setup, the prior can be considered as a decision rule, because the prior and observations together produce the posterior. 
A probability measure in the action space will be used as a posterior for the inference, but it does not have to be generated from a prior. 
We define the loss and risk function of the parameter $\Sigma_0$ and the prior $\pi$ as 
\bea
\calL(\Sigma_0, \pi(\cdot | \bfX_n)) &:=& \E^\pi \big( d(\Sigma, \Sigma_0) | \bfX_n), \\
\calR(\Sigma_0, \pi) &:=& \E _{\Sigma_0} \calL(\Sigma_0, \pi(\cdot | \bfX_n)) \,\,=\,\, \E _{\Sigma_0} \E^\pi \big( d(\Sigma, \Sigma_0) | \bfX_n).
\eea
Note that the risk function measures the performance of the prior $\pi$.
To distinguish them from the usual loss and risk, we call the above loss and risk as {\it posterior loss (P-loss)} and {\it posterior risk (P-risk)}. 
The P-risk itself is not new. For example, the P-risk was also used in \cite{castillo2014bayesian} for density estimation on the unit interval.


There are a couple of benefits of the proposed decision theoretic prior selection. 
First, the decision theoretic prior selection makes the definition of the minimax rate of the posterior mathematically concrete.  
Although the minimax rate of the posterior is used frequently, it has been used without a rigorous definition. The frequentist minimax rate is used as a proxy of the desired concept.
Second, in the study of the posterior convergence rate, the scale of the loss function needs to be carefully chosen so that the posterior consistency holds. But in the proposed decision theoretic prior selection, the inconsistent priors can be compared without any conceptual difficulty. 
Thus, the scale of the loss function does not need to be chosen.

We now define the  minimax rate and convergence rate for P-loss.
Let $\Pi_n$ be the class of all priors on $\sg_n$.
A sequence $r_n$ is said to be the {\it minimax rate for P-loss (P-loss minimax rate)} or simply {\it the Bayesian minimax rate} for the class $\cC_p^* \subset \cC_p$ and the space of the prior distributions $\Pi_n^* \subset \Pi_n$, if 
$$ \inf_{\pi\in\Pi_n^*} \sup_{\sg_0\in\cC_p^*}  \E_{\Sigma_0} \calL(\Sigma_0, \pi(\cdot | \bfX_n)) \asymp r_n.$$
A prior $\pi^*$ is said to have a {\it convergence rate for P-loss (P-loss convergence rate)} or {\it convergence rate}  $a_n$, if 
$$ \sup_{\sg_0\in\cC_p}  \E_{\Sigma_0} \calL(\Sigma_0, \pi^*(\cdot | \bfX_n)) \lesssim a_n,$$
and, if $a_n \asymp r_n$ where $r_n$ is the minimax rate for P-loss, $\pi^*$ is said to attain the minimax rate for P-loss or the Bayesian minimax rate.  If it is clear from context, we will drop P-loss and refer them as the minimax rate and the convergence rate. 
For a given inference problem, we wish to find a prior $\pi^*$ which attains the minimax rate for P-loss.

\begin{remark}
	The P-loss convergence rate implies the posterior convergence rate by Proposition \ref{epost covrate} in Supplementary Material (\cite{lee2017supp}). By obtaining the P-loss convergence rate, we also get the traditional posterior convergence rate. The converse may not be true, because for certain loss functions, the P-loss may not even converge to $0$ while the posterior convergence rate converges to $0$.
\end{remark}

\begin{remark}
	The P-loss convergence rate is slower than or equal to the frequentist minimax rate by Proposition \ref{freq bayes LB} in Supplementary Material (\cite{lee2017supp}). To obtain a P-loss minimax lower bound, the mathematical tools for frequentist minimax lower bound can be used. 
\end{remark}

\begin{remark}
If we assume that the prior class $\Pi_n$ includes the data dependent priors, the P-loss minimax rate is the same as the frequentist minimax rate. Take $\pi = \delta_{\hat{\sg}^*}$ where $\hat{\sg}^*$ is an estimator attaining the frequentist minimax rate. Then, $\pi$ attains the frequentist minimax rate and thus attains the Bayesian minimax rate. 
However, the data-dependent prior is not acceptable for legitimate Bayesian analysis unless the prior is dependent on ancillary statistics. 
Even if $\Pi_n$ does not contain data-dependent priors, in most cases the frequentist and P-loss minimax rates are the same.

However, if we consider a restricted class of priors, the P-loss minimax rate might differ from the usual frequentist minimax rate. In such cases, the frequentist minimax rate will not be a natural concept to study the asymptotic properties of the posterior. 
See Remark in subsection \ref{subsec:fro}.
\end{remark}

\section{Bayesian Minimax Rates under Various Matrix Loss Functions}\label{main}
\subsection{Bayesian Minimax Rate under Spectral Norm}
In this subsection, we show that the Bayesian minimax rate for covariance matrix under the spectral norm is $\min(p/n, 1)$. We also show that the prior
\bean\label{mixWandI}
\pi_n(\sg_n) &=& IW_p(\sg_n \mid \nu_n, A_n) I\left(p \le \frac{n}{2}\right) + \delta_{I_p}(\sg_n) I\left(p > \frac{n}{2}\right)
\eean
attains the Bayesian minimax rate for the class $\cC(\tau_1, \tau_2)$ under the spectral norm, where $IW_p(\sg \mid \nu_n, A_n)$ is the inverse-Wishart distribution, $\nu_n>p-1$ and $A_n$ is a $p\times p$ positive definite matrix. 
We have the complete result for all values of $n$ and $p$.
The Bayesian minimax rate holds for any $n$ and $p$, regardless of their relationship. 
The number $1/2$ in the prior  \eqref{mixWandI}  can be replaced by any number in $(0,1)$ and the prior still renders the minimax rate.

The main result of the section is given in Theorem \ref{minimaxS} whose proof is given in Supplementary Material (\cite{lee2017supp}). 
We divide the proof into two parts: lower bound  and  upper bound parts.
First, we show that the lower bound of the frequentist minimax rate is $\min(p/n,1)$, which may be of interest in its own right,  and it in turn implies that $\min(p/n,1)$ is a Bayesian minimax lower bound.  
After that, the P-loss convergence rate with the prior \eqref{mixWandI} is derived, which is the same as the Bayesian minimax lower bound when $\nu_n^2=O(np)$ and $A_n = S_n$. 
Consequently, we obtain the following theorem by combining these two results.
Throughout the paper, $\Pi_n$ is the class of all priors on $\sg_n \in \calC_p$ as we have defined in subsection \ref{subsec:decision}.

\begin{theorem}\label{minimaxS}
	Consider the model \eqref{model}. For any positive constants $\tau_1<\tau_2$, 
	$$\inf_{\pi\in\Pi_n} \sup_{\sg_0 \in \cC(\tau_1 , \tau_2)} \Esg \bbE^{\pi} ( \|\sg_n - \sg_0 \|^2 \mid \bfX_n) \asymp  \min\left(\frac{p}{n}, 1 \right).$$
	Furthermore, the prior \eqref{mixWandI} with $\nu_n^2 = O(np)$ and $\|A_n\|^2 = O(np)$ attains the Bayesian minimax rate.
\end{theorem}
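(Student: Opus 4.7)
The plan is to split the proof into the two halves that the paper already outlines: a Bayesian minimax lower bound of order $\min(p/n,1)$, and a matching upper bound for the mixture prior in \eqref{mixWandI}. By the remark immediately preceding the theorem (Proposition~\ref{freq bayes LB} in the supplement), the Bayesian minimax lower bound is at least the frequentist minimax lower bound, so it suffices to construct a hard subfamily in $\cC(\tau_1,\tau_2)$ for the classical minimax problem. The upper bound is handled regime by regime: the inverse-Wishart branch of the prior covers $p\le n/2$, while the point mass $\delta_{I_p}$ handles $p>n/2$ trivially.

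For the frequentist lower bound, when $p\le n$ I would use a standard Fano/Assouad construction on a packing of $\{\Sigma_\gamma=\tau_0 I_p+\epsilon\,H_\gamma : \gamma\in\Gamma\}$ where $\tau_0\in(\tau_1,\tau_2)$, $H_\gamma$ is a traceless rank-one (or low-rank) perturbation of operator norm $1$, and $\epsilon\asymp\sqrt{p/n}$. Choosing the packing so that the pairwise spectral distance is $\gtrsim\sqrt{p/n}$ while the KL divergences between the corresponding $N_p(0,\Sigma_\gamma)^{\otimes n}$ distributions are bounded yields the $p/n$ lower bound on the squared spectral risk. When $p>n$ a two-point Le Cam argument with $\Sigma=\tau_0 I_p$ versus $\Sigma=(\tau_0+c)I_p$ for a small constant $c$ keeps the KL bounded while the squared spectral distance is a positive constant, yielding the $\min(p/n,1)$ lower bound uniformly in $p$.

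For the upper bound in the regime $p>n/2$, the posterior is the deterministic $\delta_{I_p}$, so the P-loss is exactly $\|I_p-\Sigma_0\|^2\le(\tau_2+1)^2=O(1)$, which matches $\min(p/n,1)\asymp 1$. In the regime $p\le n/2$, conjugacy gives the posterior $IW_p(\nu_n+n,\,A_n+S_n)$ with $S_n=\sum_{i=1}^n X_iX_i^\top$, and I would bound
\[
\bbE^\pi\bigl(\|\Sigma-\Sigma_0\|^2\bigm|\bfX_n\bigr)\le 2\,\bbE^\pi\bigl(\|\Sigma-\bar\Sigma_n\|^2\bigm|\bfX_n\bigr)+2\,\|\bar\Sigma_n-\Sigma_0\|^2,
\]
where $\bar\Sigma_n=(A_n+S_n)/(\nu_n+n-p-1)$ is the posterior mean. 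The second term, after taking $\Esg$, splits into a variance piece controlled by the Davidson-Szarek-type bound $\Esg\|S_n/n-\Sigma_0\|^2\lesssim p/n$ and a bias piece of order $(\|A_n\|+(\nu_n+p)\|\Sigma_0\|)^2/n^2=O(p/n)$ under the hypotheses $\nu_n^2=O(np)$ and $\|A_n\|^2=O(np)$. The first term reduces to bounding the posterior spread of an inverse-Wishart; writing $\Sigma=(A_n+S_n)^{1/2}W^{-1}(A_n+S_n)^{1/2}$ with $W\sim W_p(\nu_n+n,I_p)$ and using the standard eigenvalue concentration $\|W/(\nu_n+n)-I_p\|\lesssim \sqrt{p/(\nu_n+n)}$ in expectation (Davidson-Szarek for Wishart matrices) propagates to $\bbE^\pi\|\Sigma-\bar\Sigma_n\|^2=O_{\bbP}(p/n)$ once $\|A_n+S_n\|=O_{\bbP}(1)$, which again follows from $\|A_n\|^2=O(np)$ together with $\|S_n\|=O_{\bbP}(n)$.

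The main obstacle is the inverse-Wishart posterior spread estimate in spectral norm. Concentration of eigenvalues of $W/(\nu_n+n)$ around $1$ is what drives the rate, but one has to inflate it by the operator norm of $(A_n+S_n)^{1/2}$ and then take expectation over $\bfX_n$; the calibration $\nu_n^2=O(np)$ and $\|A_n\|^2=O(np)$ is exactly what prevents either the prior-induced bias or the degrees-of-freedom-induced variance from exceeding $p/n$. A secondary technical issue is ensuring the $\cC(\tau_1,\tau_2)$ boundedness of $\Sigma_0$ really is enough to control the sample covariance $S_n$ in operator norm at the subgaussian rate $p/n$, which I would handle via the variance-factor discussion already in subsection~\ref{subsecMN}.
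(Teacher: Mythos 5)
Your upper bound strategy is essentially the paper's: conjugacy, centering at a data-dependent matrix, Wishart eigenvalue concentration to control the posterior spread, and a bias/variance split for the centering matrix. The paper centers at $\breve\Sigma_n=(nS_n+A_n)/(n+\nu_n)$ rather than the posterior mean, but this is cosmetic. One caveat: you write $\bbE^\pi\|\Sigma-\bar\Sigma_n\|^2=O_{\bbP}(p/n)$ ``once $\|A_n+S_n\|=O_{\bbP}(1)$,'' but you ultimately need a \emph{moment} bound after taking $\Esg$, not an in-probability bound. The paper handles this by explicitly splitting the outer expectation over the event $\{\|\breve\Sigma_n\|\le C_1,\ \|\breve\Sigma_n^{-1}\|\le C_2\}$ and its complement, controlling the bad-event contribution with exponential Wishart tails plus Cauchy--Schwarz; you will need something equivalent.

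The lower bound has a genuine gap in the regime $p>n$. Your proposed two-point Le Cam comparison between $\tau_0 I_p$ and $(\tau_0+c)I_p$ cannot yield a constant lower bound. The Kullback--Leibler divergence between $N_p(0,\tau_0 I_p)^{\otimes n}$ and $N_p(0,(\tau_0+c)I_p)^{\otimes n}$ is of order $npc^2$, so keeping the testing affinity bounded away from zero forces $c^2\lesssim 1/(np)$, giving a squared spectral loss of order $1/(np)\to 0$, not $\asymp 1$. Scalar multiples of $I_p$ are simply too easy to distinguish when both $n$ and $p$ are large. The paper's construction for $p>n$ is different: it embeds the $n\times n$ hard family $\{\tfrac{\tau_2}{1+\epsilon}(I_n+\epsilon uu^T):u\in\{\pm 1/\sqrt n\}^n\}$, $\epsilon=c\sqrt{n/n}=c$, into the top-left $n\times n$ block of a $p\times p$ matrix padded by $I_{p-n}$; this inherits the $n\ge n$ constant rate because the remaining $p-n$ coordinates carry no signal and do not inflate the KL. I would also flag that for $p\le n$ the label ``Fano/Assouad'' is misleading for the spectral norm: Assouad requires a coordinate-additive loss (and the paper in fact reserves Assouad for the Frobenius bound). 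What the paper actually uses for the spectral norm is a reduction to estimating the scalar functional $\|\Sigma_0\|$ followed by a chi-square-divergence ``constrained risk'' comparison between a point null $N_p(0,2^{-1}\tau_2 I_p)^n$ and a $2^p$-component mixture alternative (Lemma~\ref{constINEQ}), with the $\chi^2$ affinity computed via the inner-product distribution on the sign hypercube. A plain packing argument without that mixture structure does not control the relevant divergence.
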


\begin{remark}
	The proof for the lower bound holds even for $\tau_1$ and $\tau_2$ depending on $n$ and possibly for $\tau_1 \lra 0$ and $\tau_2\lra\infty$ as $n\lra \infty$. In such cases, 
	 the rate of the minimax lower bound is $\tau_{2}^2\cdot \min\left(p/n, 1 \right)$. 
	For details, see Theorem \ref{LboundS} in the Supplementary Material (\cite{lee2017supp}).
	Note that $\tau_2$ affects the minimax lower bound, while $\tau_1$ does not.
	A similar phenomenon occurs for estimation of sparse spiked covariance matrices. See Theorem 4 of \cite{cai2016review}.
\end{remark}

We have complete results of the Bayesian minimax rate under the spectral norm. In words, the results above do not have any condition on the rate of $p$ and $n$. For a given rate of $p$, we obtained the Bayesian minimax rate. 
When $p$ grows the same rate as $n$, the above theorem shows that estimating the covariance under the spectral norm is hopeless. Indeed, this can be seen from the form of the prior \eqref{mixWandI}. When $p \geq n/2$, the point mass prior $\delta_{I_p}$ gives the Bayesian minimax rate. In words, you can not do better than the useless point mass prior $\delta_{I_p}$. 

Applying techniques used in the proof of the upper bound, one can show that the prior \eqref{mixWandI} also gives the same P-loss convergence rate for precision matrix. 
\begin{corollary}\label{UboundS_Pre}
	Consider the model \eqref{model} and prior \eqref{mixWandI} with $\nu_n^2 = O(np)$ and $\|A_n\|^2 = O(np)$. For any positive constants $\tau_1<\tau_2$,
	$$\sup_{\sg_0 \in \cC(\tau_1 , \tau_2)} \Esg \bbE^{\pi} ( \|\sg_n^{-1} - \sg_0^{-1} \|^2 \mid \bfX_n) \le c \cdot \min\left(\frac{p}{n}, 1 \right)$$
	for all sufficiently large $n$ and some constant $c>0$.
\end{corollary}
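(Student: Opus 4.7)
The plan is to split into the two regimes dictated by the prior \eqref{mixWandI}. When $p > n/2$, the prior is the point mass $\delta_{I_p}$, so $\sg_n \equiv I_p$ and therefore $\sg_n^{-1} \equiv I_p$ under the posterior. Hence $\|\sg_n^{-1}-\sg_0^{-1}\| \le 1 + \|\sg_0^{-1}\| \le 1+\tau_1^{-1}$, and the left-hand side is bounded by a constant, which matches $\min(p/n,1)=1$ in this regime.

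For the substantive case $p \le n/2$, I would exploit the conjugacy of the inverse-Wishart prior: the posterior is $IW_p(\nu_n+n, A_n + \sum_{i=1}^n X_iX_i^T)$, so $\sg_n^{-1}$ is Wishart-distributed a posteriori. The key algebraic device is the resolvent identity
\[
\sg_n^{-1} - \sg_0^{-1} \;=\; \sg_n^{-1}(\sg_0 - \sg_n)\sg_0^{-1},
\]
so that $\|\sg_n^{-1}-\sg_0^{-1}\|^2 \le \tau_1^{-2} \|\sg_n^{-1}\|^2 \|\sg_n-\sg_0\|^2$ since $\|\sg_0^{-1}\|\le\tau_1^{-1}$ on $\cC(\tau_1,\tau_2)$. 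The strategy is then to trade the ``bad'' factor $\|\sg_n^{-1}\|^2$ against the spectral-norm bound already established in Theorem \ref{minimaxS}.

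Concretely, I would introduce the event $B = \{\|\sg_n^{-1}\| \le 2\tau_1^{-1}\}$ and split
\[
\bbE^{\pi}\bigl(\|\sg_n^{-1}-\sg_0^{-1}\|^2 \mid \bfX_n\bigr) \;\le\; 4\tau_1^{-4}\,\bbE^{\pi}\bigl(\|\sg_n-\sg_0\|^2\,I_B \mid \bfX_n\bigr) \;+\; \bbE^{\pi}\bigl(\|\sg_n^{-1}-\sg_0^{-1}\|^2\,I_{B^c}\mid \bfX_n\bigr).
\]
The first term is controlled by Theorem \ref{minimaxS} and contributes $O(p/n)$ after taking $\Esg$. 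For the second term, I would apply Cauchy--Schwarz to separate the second and fourth Wishart moments of $\|\sg_n^{-1}\|$ (and inverse-Wishart moments of $\|\sg_n\|$) from the posterior probability $\pi(B^c \mid \bfX_n)$, which I want to show is exponentially small in $\bbP_{\sg_0}$-probability.

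The main obstacle is controlling the left tail of $\lambda_{\min}(\sg_n)$ under the inverse-Wishart posterior, equivalently the right tail of $\|\sg_n^{-1}\|$ under a Wishart posterior whose scale is approximately $n(\sum X_iX_i^T)^{-1}$. Since $(\nu_n+n-p-1)(A_n+\sum X_iX_i^T)^{-1}$ is the posterior mean of $\sg_n^{-1}$ and concentrates around $\sg_0^{-1}$ (whose spectral norm is at most $\tau_1^{-1}$) when $p\le n/2$, standard concentration of extreme eigenvalues of Wishart matrices (à la Davidson--Szarek) should give $\pi(B^c\mid \bfX_n)\le e^{-cn}$ with high $\bbP_{\sg_0}$-probability, provided the sample covariance satisfies the usual $(1\pm c\sqrt{p/n})$ spectral bounds relative to $\sg_0$. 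Combining these tail bounds with moment estimates for Wishart spectral norms will render the $I_{B^c}$ term $o(p/n)$, completing the proof. I expect most of these ingredients are already embedded in the proof of the upper bound half of Theorem \ref{minimaxS}, which is why the corollary can be asserted by ``techniques used in the proof.''
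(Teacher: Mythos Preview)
Your proposal is correct and would go through, but it routes the argument differently from the paper. The paper does not use the resolvent identity $\sg_n^{-1}-\sg_0^{-1}=\sg_n^{-1}(\sg_0-\sg_n)\sg_0^{-1}$ to reduce to Theorem~\ref{minimaxS} as a black box. Instead it mirrors the structure of the proof of Theorem~\ref{UboundS}: it inserts the intermediate point $\breve{\sg}_n^{-1}=\bigl((nS_n+A_n)/(n+\nu_n)\bigr)^{-1}$, uses the triangle inequality to split into $\|\sg_n^{-1}-\breve{\sg}_n^{-1}\|$ and $\|\breve{\sg}_n^{-1}-\sg_0^{-1}\|$, and then conditions on the \emph{data} event $\{\|\breve{\sg}_n^{-1}\|\le C_1\}$ rather than on the \emph{posterior} event $\{\|\sg_n^{-1}\|\le 2\tau_1^{-1}\}$. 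On the good data event the first piece is bounded by $C_1\,\bbE^\pi\bigl(\|\breve{\sg}_n^{1/2}\sg_n^{-1}\breve{\sg}_n^{1/2}-I_p\|\mid\bfX_n\bigr)$, which is exactly the Wishart concentration already computed in \eqref{last1}; the second piece is handled via $S_n^{-1}$ and the Woodbury formula. Your approach has the advantage of recycling Theorem~\ref{minimaxS} wholesale for the main term, at the cost of needing to control posterior fourth moments of $\|\sg_n^{-1}\|$ on the bad event (and, as you note, this still requires a further split on the data because those moments blow up when $S_n$ is ill-conditioned). The paper's approach avoids fourth posterior moments altogether by conditioning on the data first, which keeps the moment bookkeeping slightly cleaner but requires re-deriving the concentration pieces rather than citing Theorem~\ref{minimaxS}.
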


We remark here that \cite{gao2016bernstein} derived a posterior convergence rate for unconstrained covariance matrix under the spectral norm when $p=o(n)$.
In this paper, we obtained a P-loss convergence rate which implies the stronger convergence than a posterior convergence rate, for any $n$ and $p$. \cite{gao2016bernstein} also attained a posterior convergence rate for precision matrix under $p^2=o(n)$. In this paper, Corollary \ref{UboundS_Pre} gives a P-loss convergence rate for any $n$ and $p$.

\subsection{Bayesian Minimax Rate under Frobenius Norm}\label{subsec:fro}

Throughout this subsection, $\tau>0$ can depend on $n$ and possibly $\tau\lra\infty$ as $n\lra\infty$. 
In this subsection, we show that the rate of the Bayesian minimax lower bound for covariance matrix under Frobenius norm is $\tau^2 \cdot \min(p,\sqrt{n})\cdot p/n$ for the class $\cC(\tau)$, and the inverse-Wishart prior attains the Bayesian minimax lower bound when $p\le \sqrt{n}$.

The following theorem gives the Bayesian minimax lower bound. 
The proof of Theorem \ref{LboundF} is given in Supplementary Material (\cite{lee2017supp}).
In the proof of the theorem, we prove that the lower bound of the  frequentist minimax rate is $ \tau^2\cdot \min(p,\sqrt{n})\cdot p/n$ as a by-product. 
\begin{theorem} \label{LboundF}
	Consider the model \eqref{model}. For any $\tau >0$,  
	$$\inf_{\pi\in\Pi_n} \sup_{\sg_0 \in \mathcal{C}(\tau)} \Esg \bbE^{\pi} (\| \sg_n - \sg_0 \|_F^2 \mid \bfX_n) \ge c \cdot \tau^2 \cdot \frac{p}{n}\cdot \min(p, \sqrt{n})$$
	for all sufficiently large $n$ and some constant $c > 0$.
\end{theorem}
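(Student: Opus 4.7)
The plan is to reduce the Bayesian minimax lower bound to a standard frequentist one and then establish the latter via Fano's inequality against a well-chosen banded packing. The reduction is immediate: by Jensen's inequality applied to the convex squared Frobenius loss, the posterior mean $\what\sg := \bbE^\pi(\sg_n \mid \bfX_n)$ satisfies $\bbE^\pi(\|\sg_n-\sg_0\|_F^2 \mid \bfX_n) \geq \|\what\sg - \sg_0\|_F^2$, so
\[
\inf_{\pi \in \Pi_n} \sup_{\sg_0 \in \cC(\tau)} \Esg \bbE^\pi(\|\sg_n-\sg_0\|_F^2 \mid \bfX_n) \;\ge\; \inf_{\what\sg} \sup_{\sg_0 \in \cC(\tau)} \Esg \|\what\sg - \sg_0\|_F^2.
\]
This is the content of Proposition~\ref{freq bayes LB}. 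It therefore suffices to prove the frequentist minimax lower bound at the rate $\tau^2 p\min(p,\sqrt n)/n$.

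For the frequentist bound I would build a packing from \emph{banded} $\pm 1$ perturbations, using a single construction that interpolates between the two regimes. Let $k := \min(p-1, \lceil\sqrt n\,\rceil)$ and let $m \asymp kp = p\min(p,\sqrt n)$ be the number of strictly-upper-triangular positions in the half-band $\{(i,j) : i < j \le i+k\}$. For $\theta \in \{-1,+1\}^m$, define the symmetric matrix $H_\theta$ by placing $\theta$ into these band positions (and zero elsewhere), and set
\[
\sg_\theta \;=\; c_0 \tau\, I_p + \tfrac{c \tau}{\sqrt n}\, H_\theta
\]
for constants $0 < c < c_0 < 1$ to be chosen. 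Gershgorin's disc theorem gives $\|H_\theta\| \le 2k$, whence $\|(c\tau/\sqrt n) H_\theta\| \le 2c\tau$, so for suitable $c_0, c$ every $\sg_\theta$ lies in $\cC(\tau)$ and is uniformly positive definite.

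The Varshamov--Gilbert bound then produces a subset $T\subset\{-1,+1\}^m$ with $\log|T| \gtrsim m$ and pairwise Hamming distance $\ge m/4$, yielding the packing radius
\[
\min_{\theta \ne \theta' \in T} \|\sg_\theta - \sg_{\theta'}\|_F^2 \;\gtrsim\; \frac{\tau^2 m}{n} \;\asymp\; \frac{\tau^2 p \min(p,\sqrt n)}{n}.
\]
A second-order expansion of the Gaussian KL divergence around $c_0\tau I_p$ yields $KL\bigl(N(0,\sg_\theta)\,\|\,N(0,\sg_{\theta'})\bigr) \lesssim \|\sg_\theta - \sg_{\theta'}\|_F^2/\tau^2 \lesssim c^2 m /n$ per sample, so the total KL across the $n$ iid samples is $\lesssim c^2 m$. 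Taking $c$ small enough that this sits below a fixed fraction of $\log|T|$, Fano's inequality (Yu 1997) then produces the desired frequentist lower bound; combined with the Jensen reduction this completes the proof.

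The main design choice, and the conceptual obstacle, is the choice of bandwidth $k = \min(p-1,\sqrt n)$. A dense off-diagonal $\pm 1$ perturbation would expose $\sim p^2$ binary parameters but has spectral norm of order $p$, forcing the amplitude down to $\tau/p$ and losing the rate when $p \gg \sqrt n$. A purely diagonal perturbation has controlled spectral norm but only $p$ parameters, giving merely $\tau^2 p/n$. Banding at half-width $\sqrt n$ is the sweet spot: the row-sum bound keeps $\|(c\tau/\sqrt n) H_\theta\|$ at $O(\tau)$ while still exposing $\asymp p\sqrt n$ independent binary coordinates, so the spectral-norm constraint and the Fano packing-number constraint are saturated simultaneously. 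Once this geometric balance is identified, the KL and Varshamov--Gilbert computations are routine.
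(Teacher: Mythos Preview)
Your proposal is correct, and the essential geometric idea---perturbing $c_0\tau I_p$ by a banded $\pm1$ matrix of amplitude $c\tau/\sqrt n$ with half-bandwidth $k=\min(p,\sqrt n)$, so that the spectral-norm constraint and the information budget are saturated together---coincides with the paper's construction. The Jensen reduction to the frequentist minimax problem is also identical.

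The only methodological difference is the information-theoretic tool: the paper applies \emph{Assouad's lemma} directly to the full hypercube $\{0,1\}^m$ (with $m\asymp kp$), bounding the total variation affinity for Hamming-adjacent pairs via Pinsker's inequality and the same second-order KL expansion (their Lemma~\ref{logdet}), whereas you pass through Varshamov--Gilbert to extract a well-separated subset and then invoke Fano. Assouad is marginally more streamlined here because it avoids the packing-extraction step and only requires the KL bound for single-coordinate flips, which keeps the arithmetic slightly cleaner; your Fano argument, on the other hand, requires bounding the KL uniformly over all pairs in the packing (Hamming distance up to $m$), but this is still of order $c^2 m$ and the argument goes through unchanged. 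Both routes yield the same rate with the same constants up to absolute factors, so the choice is a matter of taste.
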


\begin{theorem}\label{UboundW}
	Consider the model \eqref{model} and prior \eqref{prior} with 
	$ \nu_n > 0$ and $A_n >0$ for all $n$. If $\nu_n = p$ and $\|A_n\|^2 =O(n )$, for any $\tau >0$, 
	$$\sup_{\sg_0 \in \mathcal{C}(\tau)}\bbE_{\sg_0}\bbE^{\pi} (\|\sg_n - \sg_0 \|^2_F \mid \bfX_n) \le c \cdot \tau^2 \cdot \frac{p^2}{n} $$
	for some constant $c>0$ and all sufficiently large $n$. Furthermore, if $p\le \sqrt{n}$, $\nu_n^2= O(np)$ and $\|A_n\|^2 = O(np)$ is the necessary and sufficient condition for achieving the rate $p^2/n$. 
\end{theorem}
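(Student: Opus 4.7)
The plan is to exploit the conjugacy of the inverse-Wishart prior: under \eqref{model} the posterior is $\sg_n \mid \bfX_n \sim IW_p(\nu_n + n, A_n + S_n)$ where $S_n = \sum_{i=1}^n X_i X_i^T \sim W_p(n, \sg_0)$, and this allows the P-loss to be computed in closed form via the explicit inverse-Wishart moment formulas.

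First, I would decompose
\[
\bbE^\pi(\|\sg_n - \sg_0\|_F^2 \mid \bfX_n) \;=\; \sum_{i,j} \V^\pi(\sg_{n,ij} \mid \bfX_n) \;+\; \|\bbE^\pi(\sg_n \mid \bfX_n) - \sg_0\|_F^2.
\]
For $IW_p(\nu, A)$ the mean is $A/(\nu - p - 1)$ and $\sum_{i,j} \V(\sg_{ij})$ is, via the standard componentwise formula, a linear combination of $\|A\|_F^2$ and $tr(A)^2$ divided by $(\nu-p)(\nu-p-1)^2(\nu-p-3)$. Plugging in the posterior parameters (under the regime $\nu_n = o(n)$ and $p = o(n)$, so $\nu_n + n - p \asymp n$) gives the variance piece $\asymp (\|A_n + S_n\|_F^2 + tr(A_n + S_n)^2)/n^3$ and the bias piece equal to $\|A_n - (\nu_n - p - 1)\sg_0 + (S_n - n\sg_0)\|_F^2/(\nu_n + n - p - 1)^2$.

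Second, taking the outer expectation $\bbE_{\sg_0}$ reduces everything via the Wishart moments $\bbE\,tr(S_n^2) = n(n+1)tr(\sg_0^2) + n\,tr(\sg_0)^2$, $\bbE\,tr(S_n)^2 = n^2 tr(\sg_0)^2 + 2n\,tr(\sg_0^2)$, and $\bbE\|S_n - n\sg_0\|_F^2 = n(tr(\sg_0)^2 + \|\sg_0\|_F^2)$, together with $\|\sg_0\|\le\tau$ (hence $tr(\sg_0^k)\le p\tau^k$). For the first claim, $\nu_n = p$ kills the $(\nu_n-p-1)\sg_0$ bias, while $\|A_n\|_F^2 \le p\|A_n\|^2 = O(np)$ and $tr(A_n)^2 \le p^2\|A_n\|^2 = O(np^2)$ are dominated by the Wishart terms, so all pieces collapse to $O(\tau^2 p^2/n)$. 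Under the generalized conditions $\nu_n^2 = O(np)$ and $\|A_n\|^2 = O(np)$ with $p\le\sqrt n$, the same bookkeeping works because $\nu_n = O(\sqrt{np}) = o(n)$ and the additional $\nu_n$-bias contributes $(\nu_n - p - 1)^2 p\tau^2/n^2 = O(\tau^2 p^2/n)$, proving sufficiency.

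For necessity, I would choose $\sg_0$ adversarially inside $\cC(\tau)$: taking $\sg_0 = \epsilon I_p$ with small $\epsilon$ isolates the $\|A_n\|_F^2/(\nu_n+n-p-1)^2$ term and forces $\|A_n\|^2 = O(np)$, while $\sg_0 = \tau I_p$ makes the deterministic bias $\|A_n - (\nu_n-p-1)\tau I_p\|_F^2/(\nu_n+n-p-1)^2$ dominate and, combined with the first bound, forces $\nu_n^2 = O(np)$. The main obstacle is exactly this decoupling: $A_n$ and $\nu_n$ can cancel in the bias for any single $\sg_0$, so one must combine several adversarial choices and check that the Wishart fluctuation floor $p^2\tau^2/n$ does not drown out the deterministic signals one is reading off. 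The remaining bookkeeping task is to track the quartic $(\nu_n+n-p)$-denominator in the IW variance uniformly over the admissible range of $\nu_n$.
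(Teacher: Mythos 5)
Your proof follows essentially the same route as the paper's: the same variance-plus-bias decomposition of the P-risk, the same closed-form inverse-Wishart componentwise moments (whose sum over $(i,j)$ gives exactly the $\|B_n\|_F^2$ and $\operatorname{tr}(B_n)^2$ structure, with $B_n = A_n + \sum_k X_kX_k^T$), and the same Wishart second-moment identities for the outer expectation, exploiting $n+\nu_n-p\asymp n$ in the stated parameter regime; the bookkeeping you report for the dominant $p^2\tau^2/n$ contributions matches the paper's $T_1$ and $T_2$ bounds term for term. The one place you go beyond the paper is flagging the decoupling issue in the necessity direction, where the paper simply asserts that each term in its final upper bound must be $\lesssim \tau^2 p^2/n$; note, however, that your $\sg_0 = \epsilon I_p$ choice as stated only pins down $\|A_n\|_F^2 = O(np^2\tau^2)$, not $\|A_n\|^2 = O(np)$ (these coincide only when $A_n$ is comparable to a multiple of $I_p$, since in general $\|A_n\|^2 \le \|A_n\|_F^2 \le p\|A_n\|^2$), an imprecision that is shared with the paper's own terse treatment.
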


Note that if $\tau>0$ is a fixed constant, from the relationship between the spectral norm and Frobenius norm, one can obtain a P-loss convergence rate $\min(p,n)\cdot p/n$ instead of $p^2/n$ in Theorem \ref{UboundW}. However, in this case, one should restrict the parameter space to $\calC(\tau_1,\tau_2)$ instead of the more general parameter space $\calC(\tau)$. 

In practice, we recommend using $\nu_n=p$ and small $A_n$ such as $A_n=O_p$ or $A_n=I_p$, where $O_p$ denotes a $p\times p$ zero matrix because it guarantees the rate $p^2/n$ regardless of the relation between $n$ and $p$.  
Note that the Jeffreys prior \cite{Jeffreys61theory}
\bea
\pi(\sg_n) &\propto& det (\sg_n)^{-(p+2)/2},
\eea
the independence-Jeffreys prior \cite{sun2007objective}
\bea
\pi(\sg_n) &\propto& det (\sg_n)^{-(p+1)/2}
\eea
and the prior proposed by \cite{geisser1963posterior}
\bea
\pi(\sg_n) &\propto& det (\sg_n)^{-p}
\eea
satisfy the above conditions. They can be viewed as inverse-Wishart priors, $IW(\nu_n, A_n)$, with parameters $(1,O_p)$, $(0, O_p)$ and $(p-1, O_p)$, respectively. Furthermore, the $IW(p+1, S_n)$ prior, whose mean is $S_n$, also satisfies the conditions in Theorem \ref{UboundW}.

By Theorem \ref{UboundW} and Theorem \ref{LboundF}, we have the Bayesian minimax rate $\tau^2\cdot p^2/n$ for covariance matrix under the Frobenius norm when $p\le \sqrt{n}$.
Thus, with the inverse-Wishart prior, we attain the Bayesian minimax rate under the Frobenius norm.  
\begin{theorem} \label{minimaxF}
	Consider the model \eqref{model}. If $p\le \sqrt{n}$, for any $\tau >0$,
	$$\inf_{\pi\in\Pi_n} \sup_{\sg_0 \in \mathcal{C}(\tau)} \Esg \bbE^{\pi} (\| \sg_n - \sg_0 \|_F^2 \mid \bfX_n) \asymp \tau^2 \cdot \frac{p^2}{n}.$$
	Furthermore,  $\nu_n^2= O(np)$ and $\|A_n\|^2 = O(np)$ is the necessary and sufficient condition for the prior \eqref{prior} to achieve the Bayesian minimax rate when $p\le \sqrt{n}$. 
\end{theorem}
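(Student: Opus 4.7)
The plan is to assemble Theorem \ref{minimaxF} directly from the two technical theorems that precede it, since all the heavy lifting has already been done. The statement has three parts: the matching order $p^2/n$, the fact that the inverse-Wishart prior attains it, and the necessary-and-sufficient characterization of the hyperparameters.

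First, I would obtain the lower bound. Theorem \ref{LboundF} gives, for every $\tau>0$,
\[
\inf_{\pi\in\Pi_n}\sup_{\sg_0\in\cC(\tau)}\Esg\bbE^{\pi}\bigl(\|\sg_n-\sg_0\|_F^2\mid\bfX_n\bigr)\ \ge\ c\,\tau^2\cdot\frac{p}{n}\cdot\min(p,\sqrt n).
\]
Under the hypothesis $p\le\sqrt n$ we have $\min(p,\sqrt n)=p$, so the right-hand side is $c\,\tau^2 p^2/n$, giving the lower bound. I would note that this bound is $\Pi_n$-uniform and does not require any restriction on the prior class, so it immediately matches the infimum appearing in the theorem statement.

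Second, I would get the upper bound from Theorem \ref{UboundW}. When $p\le\sqrt n$ the conditions $\nu_n^2=O(np)$ and $\|A_n\|^2=O(np)$ are compatible with (in fact weaker than what is needed for) the sufficient choice $\nu_n=p$, $\|A_n\|^2=O(n)$, whose P-risk is bounded by $c\,\tau^2 p^2/n$. So the inverse-Wishart prior \eqref{prior} with these hyperparameters achieves the rate and the $\asymp$ claim follows by combining with the lower bound. For the necessary-and-sufficient clause, I would simply cite the second sentence of Theorem \ref{UboundW}, which asserts exactly this equivalence under $p\le\sqrt n$: sufficiency is part of the upper-bound half of Theorem \ref{UboundW}, while necessity is the other half of that theorem, and therefore transfers verbatim to the Bayesian minimax statement here (any prior violating the conditions cannot attain the rate, hence in particular cannot attain the minimax rate).

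There is no substantive obstacle in this proof; it is a packaging argument. The only point that requires a moment's care is verifying that the parameter regime in which the lower bound is proved (arbitrary $\tau$, possibly $\tau\to\infty$) is the same regime in which the upper bound of Theorem \ref{UboundW} is valid, so that the two rates can be matched with a common $\tau^2$ prefactor. Since both Theorems \ref{LboundF} and \ref{UboundW} are stated for the class $\cC(\tau)$ with the same convention on $\tau$, this compatibility is automatic, and the proof reduces to the two-line assembly sketched above.
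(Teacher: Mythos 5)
Your proposal is correct and takes essentially the same route as the paper: the paper itself derives Theorem \ref{minimaxF} as a direct corollary of Theorem \ref{LboundF} (specializing $\min(p,\sqrt n)=p$ under the assumption $p\le\sqrt n$) and Theorem \ref{UboundW} (for both the attainability and the necessary-and-sufficient hyperparameter condition), exactly as you describe. The observation that the lower bound holds uniformly over all priors (so ``achieves the Bayesian minimax rate'' reduces to the upper-bound inequality, making the necessity clause of Theorem \ref{UboundW} transfer directly) is the right way to close the necessity step.
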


\begin{remark} 
In section 2.4, we have said that the Bayesian minimax rate can be different from the frequentist minimax rate when a restricted prior class is considered, and that the frequentist minimax rate will not be a natural concept to address the asymptotic properties of the posteriors from a restricted prior class. We give an example here.  Consider a prior class $\Pi_n^* = \{ \pi \in IW_p(\nu_n, A_n) : \nu_n \ge n , \, A_n \in\calC_p  \}$ and assume $p\le \sqrt{n}$. It is easy to check that 
	\bea
	\inf_{\pi\in\Pi_n^*} \sup_{\sg_0 \in \mathcal{C}(\tau)} \bbE_{\sg_0} \bbE^\pi (\| \sg_n - \sg_0 \|_F^2 \mid \bfX_n) 
	&\asymp& \tau^2 \cdot p,
	\eea
	from the proof of Theorem \ref{UboundW}.
Note that the obtained P-loss minimax rate differs from the usual frequentist minimax rate, $\tau^2 \cdot p^2/n$. 
\end{remark}

\subsection{Bayesian Minimax Rate under Bregman matrix Divergence}
In this section, we obtain the Bayesian minimax rate under a certain class of Bregman matrix divergences.
Let $\Phi$ be the class of differentiable and strictly convex real-valued functions satisfying (i)-(iii) conditions in the subsection \ref{subsecMN}, and let $\calD_\Phi$ be the class of Bregman matrix divergences $D_\phi$ where $\phi(X) = \sum_{i=1}^p \varphi (\lambda_i)$ for symmetric matrix $X$ and $\varphi \in \Phi$. 

To achieve the Bayesian minimax convergence rate for Bregman matrix divergences, we use the truncated inverse-Wishart distribution $IW_p( \nu_n,A_n,K_1, K_2)$ whose eigenvalues are all in $[K_1,K_2]$ for some positive constants $K_1<K_2$. 
The density function of $IW_p(\nu_n,A_n,K_1,K_2)$ is given by
\bean\label{tr_IWK12}
\pi^{n,K_1,K_2}(\sg_{n}) &=& \frac{ \det(\sg_{n})^{-(\nu+p+1)/2} e^{-\frac{1}{2}tr(A_n\sg_{n}^{-1})}I(\sg_{n}\in \cC(K_1,K_2) ) }{ \int_{\cC(K_1,K_2)} \det(\sg_{n}')^{-(\nu+p+1)/2} e^{-\frac{1}{2}tr(A_n\sg_{n}'^{-1})} d\sg_n' }
\eean
where $\nu_n>p-1$ and $A_n$ is a $p\times p$ positive definite matrix.

\begin{theorem}\label{minimaxB}
	Consider the model \eqref{model}. If $p\le \sqrt{n}$, for any positive constants $\tau_1<\tau_2$
	$$\inf_{\pi\in\Pi_n} \sup_{\sg_0 \in \cC(\tau_1 , \tau_2)} \Esg \bbE^{\pi} ( D_\phi (\sg_n, \sg_0) \mid \bfX_n) \asymp \frac{p^2}{n}$$
	for all $D_\phi \in \calD_\Phi$. Furthermore, the prior \eqref{tr_IWK12} with $\nu_n^2= O(np)$, $\|A_n\|^2 = O(np)$, $K_1 < \tau_1$ and $K_2 > \tau_2$ achieves the Bayesian minimax rate when $p\le \sqrt{n}$. 
\end{theorem}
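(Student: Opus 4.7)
The plan is to reduce both bounds to the corresponding Frobenius-norm results (Theorems \ref{LboundF} and \ref{UboundW}) by exploiting the local equivalence between the Bregman divergence $D_\phi$ and the squared Frobenius norm on matrices with bounded eigenvalues. Conditions (i)--(iii) imply, via a second-order Taylor expansion of $\phi(X) = \sum_i \varphi(\lambda_i)$, that there exist constants $c_L, c_U > 0$ (depending on any bounded eigenvalue range $[K_1, K_2] \subset (\tau_1, \infty)$ and on $\varphi$) such that $c_L \|A-B\|_F^2 \le D_\phi(A,B) \le c_U \|A-B\|_F^2$ whenever $A$ and $B$ both have eigenvalues in $[K_1, K_2]$; this is the content of Lemma \ref{BequiF}, which I will invoke in both directions.

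For the lower bound, I would reexamine the hypothesis construction in the proof of Theorem \ref{LboundF}. Standard constructions produce matrices differing from a fixed base point (e.g.\ the identity) by perturbations of order $O(\sqrt{p^2/n}) = O(1)$, so by choosing the base point and perturbation magnitudes appropriately, all hypotheses may be arranged to lie inside $\cC(\tau_1, \tau_2)$. For any prior $\pi$ and $K_1 < \tau_1 < \tau_2 < K_2$, one has
$$ \bbE^\pi(D_\phi(\sg_n, \sg_0) \mid \bfX_n) \ge c_L \, \bbE^\pi \bigl( \|\sg_n - \sg_0\|_F^2 \, I(\sg_n \in \cC(K_1, K_2)) \mid \bfX_n \bigr) $$
by Lemma \ref{BequiF}. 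The testing-based argument behind Theorem \ref{LboundF}, together with Proposition \ref{freq bayes LB} passing the frequentist lower bound to the Bayesian one, then delivers a lower bound of order $p^2/n$ on this restricted moment (the complementary event contributes a non-negative term that can only help).

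For the upper bound, fix $K_1 < \tau_1 < \tau_2 < K_2$ and use the truncated prior \eqref{tr_IWK12}; posterior draws then satisfy $\sg_n \in \cC(K_1, K_2)$ almost surely, and since $\sg_0 \in \cC(\tau_1, \tau_2) \subset \cC(K_1, K_2)$, Lemma \ref{BequiF} yields $D_\phi(\sg_n, \sg_0) \le c_U \|\sg_n - \sg_0\|_F^2$. It then suffices to transfer the Frobenius bound $p^2/n$ from Theorem \ref{UboundW}, which is stated for the untruncated inverse-Wishart prior, to its truncated counterpart. This transfer is the main technical obstacle: the truncated posterior is a renormalization of the untruncated posterior $IW_p(\nu_n + n, A_n + S_n)$ restricted to $\cC(K_1, K_2)$, and one must show (i) this event has posterior probability $1 - o(1)$ uniformly over $\sg_0 \in \cC(\tau_1, \tau_2)$ and (ii) conditioning on it does not inflate $\bbE^\pi \|\sg_n - \sg_0\|_F^2$ beyond the target rate. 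Both facts should follow from eigenvalue concentration of the inverse-Wishart distribution: for $p \le \sqrt{n}$ the scale matrix $(A_n + S_n)/(\nu_n + n - p - 1)$ is close to $\sg_0$, so eigenvalues of posterior draws concentrate inside $[K_1, K_2]$ with overwhelming probability, making the renormalizing factor $1 + o(1)$ and leaving the moment bound of Theorem \ref{UboundW} intact up to multiplicative constants.
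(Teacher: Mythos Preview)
Your upper-bound plan is essentially the paper's proof: apply Lemma~\ref{BequiF} (both $\sg_n$ and $\sg_0$ have eigenvalues in $[K_1,K_2]$ by truncation and by hypothesis), then carry Theorem~\ref{UboundW} from the untruncated to the truncated inverse-Wishart posterior. The paper executes the transfer by conditioning on the \emph{data} event $\breve\sg_n:=(nS_n+A_n)/(n+\nu_n)\in\cC(C_2,C_3)$: on this event the truncated posterior moment is at most $[\pi(\sg_n\in\cC(K_1,K_2)\mid\bfX_n)]^{-1}$ times the untruncated one, and Wishart eigenvalue concentration forces that factor to be $\le 2$; on the complement the crude bound $\|\sg_n-\sg_0\|_F^2\le 2p(K_2^2+\tau_2^2)$ together with an $e^{-cn}$ probability suffices. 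Your points (i)--(ii) describe exactly this mechanism.

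Your lower-bound argument has a gap. The pointwise inequality $\bbE^\pi[D_\phi(\sg_n,\sg_0)\mid\bfX_n]\ge c_L\,\bbE^\pi[\|\sg_n-\sg_0\|_F^2\, I(\sg_n\in\cC(K_1,K_2))\mid\bfX_n]$ is correct, but Theorem~\ref{LboundF} lower-bounds the \emph{full} Frobenius P-risk, not this truncated moment; a prior supported entirely outside $\cC(K_1,K_2)$ makes the truncated moment zero while the full one is large, so the testing argument does not apply to your right-hand side. The remark that ``the complementary event contributes a non-negative term that can only help'' is true for the $D_\phi$ side, not for the truncated Frobenius quantity you then try to lower-bound. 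The clean fix bypasses the truncated moment altogether: since $\phi$ is convex, $A\mapsto D_\phi(A,\sg_0)$ is convex, so Jensen gives $\bbE^\pi[D_\phi(\sg_n,\sg_0)\mid\bfX_n]\ge D_\phi(\tilde\sg_n,\sg_0)$ with $\tilde\sg_n=\bbE^\pi(\sg_n\mid\bfX_n)$, reducing to the frequentist minimax lower bound for $D_\phi$ over $\cC(\tau_1,\tau_2)$. That lower bound is $p^2/n$, either by \cite{cai2012optimal} directly, or by rerunning the Assouad construction of Theorem~\ref{LboundF} after restricting (without loss) to estimators with eigenvalues clipped to $[\tau_1,\tau_2]$, whereupon Lemma~\ref{BequiF} converts the Frobenius lower bound into a $D_\phi$ lower bound. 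The paper's written proof of Theorem~\ref{minimaxB} in fact details only the upper bound and treats the lower bound as immediate from Theorem~\ref{LboundF} via this route.
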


To extend the minimax result for the squared Frobenius norm to the Bregman matrix divergence, the posterior distribution for $\sg_n$ and the true covariance $\sg_0$ should be included in the class $\cC(K_1,K_2)$ and $\cC(\tau_1,\tau_2)$, respectively, for some positive constants $K_1 <\tau_1$ and $K_2 >\tau_2$.
The truncated inverse-Wishart prior was needed to restrict the posterior distribution for $\sg_n$ within the class $\cC(K_1,K_2)$. 
In practice, we recommend using sufficiently small $K_1$ and large $K_2$. 
According to the above theorem, the minimax convergence rate for the class $\calD_\Phi$ is equivalent to that for the Frobenius norm if we consider the parameter space $\cC(\tau_1 , \tau_2)$. Moreover, the truncated inverse-Wishart prior $IW_p(\nu_n,A_n,K_1, K_2)$ achieves the Bayesian minimax rate. The proof of the theorem is given in Supplementary Material (\cite{lee2017supp}).

\subsection{Bayesian Minimax Rate of Log Determinant of Covariance Matrix}
In this subsection, we establish the Bayesian minimax rate for the log-determinant of the covariance matrix under squared error loss. The frequentist minimax lower bound was derived by \cite{cai2015law}. We prove that the inverse-Wishart prior achieves the Bayesian minimax rate when $p=o(n)$.

The estimator of the log-determinant of the covariance matrix can be used as a basic ingredient  for  constructing hypothesis test or the quadratic discriminant analysis \cite{anderson03}.
The log-determinant of the covariance matrix is needed to compute the quadratic discriminant function for multivariate normal distribution
$$-\frac{1}{2}\log \det \sg - \frac{1}{2}(x-\mu)^T \sg^{-1} (x -\mu) $$
where $x$ is the random sample from $N_p (\mu, \sg)$.
Furthermore, the differential entropy of $N_p (\mu, \sg)$ is given by
$$\frac{p}{2} + \frac{p \log (2\pi)}{2} + \frac{ \log\det \sg }{2},$$
so the estimation of the differential entropy is equivalent to estimation of the log-determinant of the covariance matrix, when we consider the multivariate normal distribution. 
The differential entropy has various applications including independent component analysis (ICA), 
spectroscopy, image analysis,  
and information theory.
See \cite{beirlant97}, \cite{dudewicz91}, \cite{Hyvarinen98} and \cite{Cover91}.

\cite{cai2015law} showed that the minimax rate for the log-determinant of the covariance matrix under squared error loss is $p/n$ and their estimator achieves this optimal rate when $p = o(n)$. 

On the Bayesian side, \cite{srivastava2008bayesian} and \cite{gupta2010parametric} suggested a Bayes estimator for log-determinant of the covariance matrix of the multivariate normal. They proposed using the inverse-Wishart prior and showed that the posterior mean minimizes expected Bregman divergence. 
In this subsection, we support their argument by showing that the inverse-Wishart prior achieves the P-loss minimax rate for log-determinant of the covariance matrix under squared error loss. Thus, we show that the inverse-Wishart prior gives the optimal result in the Bayesian sense. We also show the sufficient conditions for achieving the Bayesian minimax rate. 
The following theorem presents the Bayesian minimax rate for the log-determinant of the covariance matrix under the squared error loss. The proof of the theorem is given in Supplementary Material (\cite{lee2017supp}).

\begin{theorem}\label{minimaxLD}
	Consider the model \eqref{model}. If $p = o(n)$, we have
	$$\inf_{\pi\in\Pi_n} \sup_{\sg_0 \in \cC_p} \Esg \bbE^{\pi} ( (\log \det \sg_n - \log \det \sg_0)^2 \mid \bfX_n) \asymp \frac{p}{n}.$$
	Furthermore, prior \eqref{prior} with $ \nu_n^2 = O( n/p )$ and $A_n =O_p$ attains the Bayesian minimax rate.
\end{theorem}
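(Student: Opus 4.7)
My proof splits into matching lower and upper bounds of order $p/n$. For the lower bound, I would invoke the frequentist minimax rate $\asymp p/n$ established in \cite{cai2015law} and combine it with Proposition~\ref{freq bayes LB} in the Supplementary Material (which asserts that the P-loss minimax rate dominates the frequentist one) to conclude $\inf_\pi\sup_{\sg_0}\Esg\bbE^\pi[(\log\det\sg_n - \log\det\sg_0)^2\mid\bfX_n] \gtrsim p/n$ immediately.

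For the upper bound I would exploit inverse-Wishart conjugacy: with $A_n = O_p$ the posterior is $\sg_n\mid\bfX_n \sim IW_p(\nu_n+n, nS_n)$, so $\sg_n^{-1}\mid\bfX_n\sim W_p(\nu_n+n,(nS_n)^{-1})$. The Bartlett decomposition gives
$$\log\det\sg_n \stackrel{d}{=} \log\det(nS_n) - \sum_{i=1}^p \log\chi^2_{\nu_n+n-i+1}\quad(\bfX_n\text{-a.s.}),$$
with the chi-squares mutually independent and independent of $\bfX_n$. A bias-variance decomposition then splits the target into the posterior variance $\Esg\V^\pi[\log\det\sg_n\mid\bfX_n]$ and the expected squared error of the posterior mean $\Esg(\bbE^\pi[\log\det\sg_n\mid\bfX_n] - \log\det\sg_0)^2$. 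The posterior variance is deterministic, equal to $\sum_{i=1}^p\V\log\chi^2_{\nu_n+n-i+1}\sim 2p/n$ by the expansion $\V\log\chi^2_k = 2/k + O(1/k^2)$.

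For the second piece, the posterior mean is $T := \log\det S_n + C_n$ with a deterministic constant $C_n = p\log n - \sum_{i=1}^p\bbE\log\chi^2_{\nu_n+n-i+1}$. A further bias-variance split under $\bbE_{\sg_0}$ yields $\V_{\sg_0}(\log\det S_n) + (\bbE_{\sg_0}T - \log\det\sg_0)^2$; a second application of the Bartlett decomposition, now to $nS_n\sim W_p(n,\sg_0)$, gives $\V_{\sg_0}\log\det S_n\sim 2p/n$ and, after the $\log\det\sg_0$ and $p\log n$ terms cancel,
$$\bbE_{\sg_0}T - \log\det\sg_0 = \sum_{i=1}^p\bigl(\bbE\log\chi^2_{n-i+1} - \bbE\log\chi^2_{\nu_n+n-i+1}\bigr).$$
Using $\bbE\log\chi^2_k = \psi(k/2) + \log 2$ and the mean value theorem with $\psi'(x)\asymp 1/x$ for $x$ large, each summand is $O(\nu_n/(n-i+1))$, whence the residual bias is $O(\nu_n p/n)$ and its square is $O(\nu_n^2p^2/n^2) = O(p/n)$ under $\nu_n^2 = O(n/p)$.

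The main obstacle is controlling the bias sharply enough under only $\nu_n^2 = O(n/p)$: naively bounding the bias of $\log\det S_n$ alone yields $O(p^2/n)$, whose square exceeds $p/n$ whenever $p\gg n^{1/3}$. The crucial point is that the prior-induced correction $C_n$ built into the posterior mean precisely cancels this second-order Taylor bias, leaving only the first-order perturbation caused by shifting the Bartlett indices from $n-i+1$ to $\nu_n+n-i+1$; squaring this residual yields exactly the $O(p/n)$ budget that the condition $\nu_n^2 = O(n/p)$ allows. Tracking the next-order digamma remainders is routine under $p = o(n)$, and summing the three $O(p/n)$ contributions (posterior variance, variance of $\log\det S_n$, and squared residual bias) finishes the upper bound and hence the theorem.
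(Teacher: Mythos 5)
Your proof is correct and follows essentially the same route as the paper: both reduce the posterior of $\log\det\Sigma_n$ to $\log\det(nS_n)$ minus a sum of independent $\log\chi^2_{\nu_n+n-k}$ terms, then identify the same three $O(p/n)$ contributions (posterior variance, frequentist variance of $\log\det S_n$, and the squared digamma-difference bias created by $\nu_n$), with $\nu_n^2=O(n/p)$ used precisely to keep the last one within budget, and the lower bound taken from \cite{cai2015law} via Proposition~\ref{freq bayes LB}. The only organizational difference is that you perform the clean two-level Pythagorean bias--variance decomposition and compute $\V_{\sg_0}(\log\det S_n)$ directly from the Bartlett decomposition, whereas the paper instead splits by the triangle inequality into the risk of the UMVUE $T_n=\log\det S_n-\tau_{n,p}$ (which it then bounds by citing Theorem~2 of \cite{cai2015law}) plus a correction term whose mean and variance carry the remaining two contributions --- the same terms under a different bookkeeping.
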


\begin{remark}
	One can also show that the optimal minimax convergence rate is achieved by using the prior \eqref{prior} with $\nu_n^2= O(n/p)$, $A_n=c_n S_n$ and $c_n^2 =O(n/p)$.
\end{remark}

\begin{remark}
\cite{gao2016bernstein} showed the Bernstein-von Mises result for the log-determinant of covariance, which implies a posterior convergence rate. However, they considered a restrictive parameter space $\calC(\tau_1,\tau_2)$ and the stronger condition $p^3 = o(n)$. In this paper, the more general parameter space $\calC_p$ and weaker condition $p=o(n)$ are sufficient for the stronger result, a P-loss convergence rate.
\end{remark}

\section{Simulation study}\label{simul}
In this section, we support our theoretical results by a simulation study. The simulations for three loss functions, spectral norm, square of scaled Frobenius norm and squared log-determinant loss, were conducted. 
We compare the performance of the minimax priors with those of some frequentist estimators. 

We choose the posterior mean as a Bayesian estimator. The posterior mean obtained from the minimax prior  attains the minimax rate in Theorem \ref{UboundS}, Theorem \ref{UboundW} and Theorem \ref{minimaxLD} by the Jensen's inequality.

We generated dataset $X_1,\ldots, X_n$ from $N_p(0, \sg_0)$ where true covariance matrix $\sg_0$ was either diagonal or full covariance matrix. A full covariance matrix is a covariance matrix which does not have any restriction on its elements such as sparsity or banding. 
In the diagonal covariance setting, the true covariance is $\sg_0 = diag(\sigma_{0, ii})$ where $\sigma_{0,ii} \overset{iid}{\sim} Unif(0, 5)$. In the full covariance setting, we made the true covariance $\sg_0 = V^T V$ where $V=(v_{ij})$ is a $p\times p$ matrix with $v_{ij} \overset{iid}{\sim} N(0, 5/p)$.  
In the simulation study, the dimensions of the true covariance matrices are $25, 50, 100$ and $200$, and the numbers of data $n$ are either $n=p^2$ or $n=\lceil p^{3/2} \rceil$.
For each setting, we generated a true covariance once for which we generated 100 data sets and calculated estimators of the covariance. 

For the spectral norm and square of scaled Frobenius norm loss, we computed the posterior mean of the inverse-Wishart prior, $IW(\nu_n, A_n)$,  for comparison. We chose $\nu_n = 2, \sqrt{n/p}, p$ and $n$ to see the effect of the $\nu_n$, but fixed $A_n = O_p$ to remove the prior effect on the structure of the covariance estimate. 
By Theorems \ref{UboundS} and \ref{UboundW}, when $n=p^2$,  the inverse-Wishart prior with $\nu_n=2, \sqrt{n/p}$ and $p$ are minimax priors, while that with $\nu_n=n$ is not. 
We also computed the sample covariance $S_n$ and the tapering estimator $\hat{\Sigma}_k$ \cite{cai2010optimal} for comparison. As mentioned before, the sample covariance matrix is a Bayesian estimator using inverse-Wishart prior with $\nu_n =p+1$ and $A_n = O_p$, which satisfies the conditions in Theorem \ref{UboundW}.
We used $k= \sqrt{n}$ as the threshold of tapering estimator. It corresponds to  $\alpha=0$ in \cite{cai2010optimal}, which gives the minimal sparse constraint for the covariance matrix in their class.


\begin{figure}[!bt]\centering
	\includegraphics[width=12cm, height=12cm]{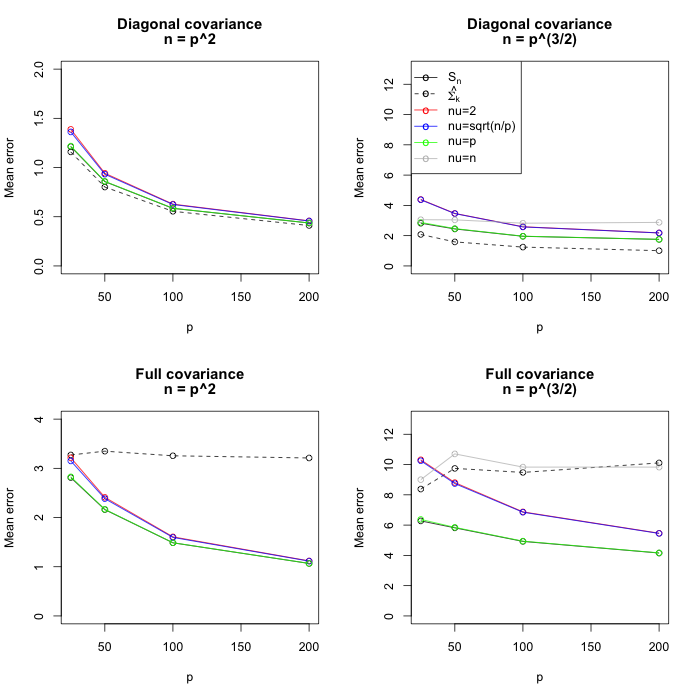}
	\caption{The risks for the Bayes estimator with $IW(\nu_n, O_p, K)$, the sample covariance $S_n$ and tapering estimator $\hat{\Sigma}_k$ under the spectral norm loss function.  
		The true covariances were generated in diagonal setting (top row) and full covariance setting (bottom row). The number of the observation was chosen by either $n=p^2$ (left column) or $n=\lceil p^{3/2}\rceil$ (right column). 
	}
	\label{fig:covSpec}
\end{figure}
Figure \ref{fig:covSpec} summarizes the simulation results for the spectral norm. 
Each point of the plot was calculated by
\bea
\frac{1}{100} \sum_{s=1}^{100} \|\sg_0 - \what{\sg}_n^{(s)} \|
\eea
where $\what{\sg}_n^{(s)}$ is the estimate of the true covariance $\sg_0$ in $s$-th simulation. 
The first and second rows of Figure \ref{fig:covSpec} show  the results when the true covariance matrix is a diagonal and full covariance, respectively; the left and right columns are the results when  $n=p^2$ and $n=\lceil p^{3/2} \rceil$, respectively.

The inverse-Wishart prior with $\nu_n = p$ and the sample covariance performed well in all cases. They are either the best or comparable to the best. When $n= \lceil p^{3/2} \rceil$, the  truncated inverse-Wishart prior with $\nu_n = n$  is not minimax, and the simulation results show that it  performed the worst or the second to the worst. 
The inverse-Wishart priors with $\nu_n = 2$ and $\sqrt{n/p}$ are minimax, and thus their risks decrease as $n \lra \infty$ in all cases, but their performance are slightly worse than that with $\nu_n = p$. 
The tapering estimator $\hat{\sg}_k$ performed the best in diagonal settings because it gives zero to many of upper and lower diagonal elements or shrink them toward zero. However, in the full covariance settings, it performed the worst or close to the worst  for the same reason.


\begin{figure}[!bt]\centering
	\includegraphics[width=12cm, height=12cm]{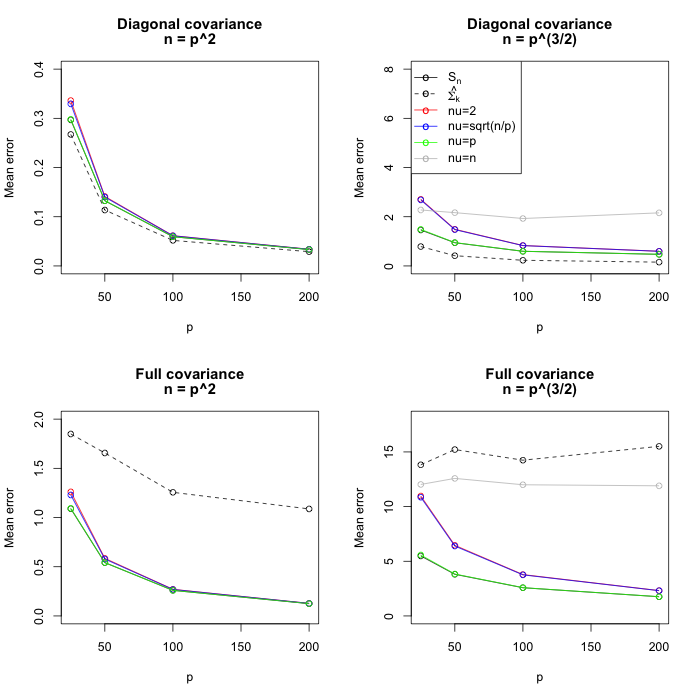}
	\caption{The risks for the Bayes estimator with $IW(\nu_n, O_p, K)$, the sample covariance $S_n$ and tapering estimator $\hat{\Sigma}_k$ under the squared Frobenius norm loss function.
		The true covariances were generated in diagonal setting (top row) and full covariance setting (bottom row). The number of the observation was chosen by either $n=p^2$ (left column) or $n=\lceil p^{3/2}\rceil$ (right column).
	}
	\label{fig:covFro}
\end{figure}
Figure \ref{fig:covFro} summarizes the simulation results for Frobenius norm. 
Each point of the plot was calculated by
$$\frac{1}{100} \sum_{s=1}^{100} \frac{1}{p} \| \sg_0 - \widehat{\sg}_n^{(s)} \|_F^2 $$
where $ \widehat{\sg}_n^{(s)}$ is the estimate of the true covariance $\sg_0$ in $s$-th simulation. 
The results are quite similar to the spectral norm case.


For the square of log-determinant loss, we chose the maximum likelihood estimator (MLE) $\log \det S_n$ and the uniformly minimum variance unbiased estimator (UMVUE) for comparison. The UMVUE of $\log \det \sg$ is given by
$$ \log \det S_n + p\log \left( \frac{n}{2} \right) - \sum_{j=0}^{p-1} \psi \left( \frac{n-k}{2} \right) $$
where $\psi$ is the digamma function which is defined by $\psi(x) = d/dz \log \Gamma(z)|_{z=x}$ where $\Gamma$ is the gamma function. 
See \cite{ahmed89} for more details. 
We tried the same settings for inverse-Wishart prior as before. Note that for $n=p^2$ and $n=\lceil p^{3/2}\rceil$, the choices $\nu_n = 2$ and $\sqrt{n/p}$ satisfy the sufficient condition in Theorem \ref{minimaxLD} while $\nu_n =p$ and $n$ do not. 
The posterior mean of the log-determinant for the inverse-Wishart prior is
$$\log \det \left(S_n+ \frac{A_n}{n}\right) + p\log \left( \frac{n}{2} \right) - \sum_{j=0}^{p-1} \psi \left( \frac{n +\nu_n -k}{2} \right) .$$
Thus, the UMVUE is the same as the Bayesian estimator using inverse-Wishart prior with $\nu_n =0$ and $A_n = O_p$, which satisfies the sufficient condition in Theorem \ref{minimaxLD}.

\begin{figure}[!bt]\centering
	\includegraphics[width=12cm, height=12cm]{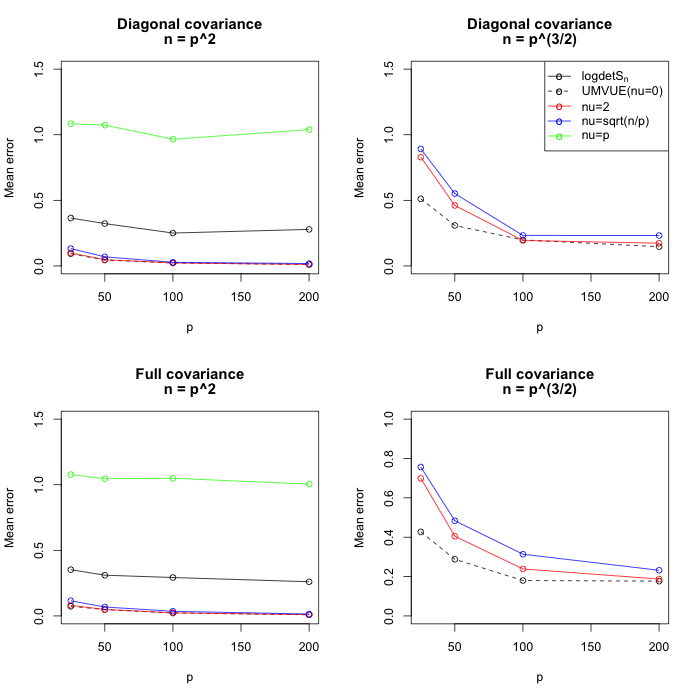}
	\caption{The squared log-determinant loss function plot. The true covariances were generated in diagonal setting (top row) and full covariance setting (bottom row). The number of the observation was chosen by either $n=p^2$ (left column) or $n=\lceil p^{3/2}\rceil$ (right column).}
	\label{fig:covLogdet}
\end{figure}
Figure \ref{fig:covLogdet} summarizes the simulation results for log-determinant.
Each point of the plot was calculated by
$$\frac{1}{100} \sum_{s=1}^{100}  ( \log \det \sg_0 - \widehat{\log \det \sg}_n^{(s)} )^2 $$
where $\widehat{\log \det \sg}_n^{(s)}$ is the estimate of $\log\det\sg$ in $s$-th simulation and $\sg_0$ is the true covariance. 
The top and bottom rows are for the diagonal and full true covariance cases, respectively; the left and right columns are for $n=p^2$ and $\lceil p^{3/2}\rceil$, respectively. 

For the squared log-determinant loss,  the inverse-Wishart priors with $\nu_n=2$ and $\sqrt{n/p}$ are minimax, while those with $\nu_n = p$ and $n$ are not. The UMVUE or the Bayes estimator of the  the inverse-Wishart priors with $\nu_n=0$ performed the best in all cases.  The inverse-Wishart priors with $\nu_n=2$ and $\sqrt{n/p}$ performed comparable to the UMVUE.  Interestingly, the inverse-Wishart priors with $\nu_n=p$, which was the best under the spectral norm,  performed worst in all cases. When $n = \lceil p^{3/2}\rceil$, the results for $\nu_n=p$ do not appear in the Figure \ref{fig:covLogdet} because of its large risk values.
This signifies the fact that we need to choose different prior parameter for different loss function.

\section{Discussion}\label{disc}
In this paper, we develop a new framework for the Bayesian minimax theory, and introduce Bayesian minimax rate and P-loss convergence rate. The proposed decision theoretic framework  gives an alternative way to distinguish the good priors from the inadequate ones and makes the definition of the minimax rate of the posterior clear. 
We obtain the Bayesian minimax rates for the normal covariance model under the various loss functions: spectral norm, the squared Frobenius norm, Bregman matrix divergence and squared log-determinant loss for large covariance estimation. We show that the inverse-Wishart prior or truncated inverse-Wishart prior attains the Bayesian minimax rate. 
The simulation results support the theory obtained.


\appendix
\section{Basic properties of P-loss convergence rate}

A frequentist minimax lower bound is defined as a lower bound of 
$$\inf_{\hat{\sg}} \sup_{\sg_0\in \cC_p } \bbE_{\sg_0} (d(\hat{\sg}, \sg_0) )$$ 
where $\hat{\sg}$ denotes an arbitrary estimator of $\sg_0$, and we say $r_n$ is the frequentist minimax rate for the class $\cC_p$ and the space of the estimators of $\sg_0$, if
\bea
\inf_{\hat{\sg}} \sup_{\sg_0\in \cC_p } \bbE_{\sg_0} (d(\hat{\sg}, \sg_0) ) \asymp r_n .
\eea
Propositions \ref{epost covrate} and \ref{freq bayes LB} state two basic properties of P-loss convergence rate and the Bayesian minimax rate. 
\begin{proposition}\label{epost covrate}
	For any $\sg_0 \in \calC_p$,  a P-loss convergence rate at $\sg_0$ is a posterior convergence rate at $\sg_0$.
\end{proposition}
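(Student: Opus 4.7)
The plan is to prove this by a double application of Markov's inequality. Let $a_n$ be a P-loss convergence rate at $\sigma_0$, meaning that there is a constant $C>0$ such that $\mathbb{E}_{\sigma_0} \mathbb{E}^\pi(d(\sigma_n, \sigma_0) \mid \mathbf{X}_n) \leq C \cdot a_n$ for all large $n$. I wish to show: for every sequence $M_n \to \infty$,
$$\pi\bigl(d(\sigma_n,\sigma_0) \geq M_n a_n \,\big|\, \mathbf{X}_n\bigr) \longrightarrow 0 \quad \text{in } \mathbb{P}_{\sigma_0}\text{-probability.}$$

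First, I would apply Markov's inequality to the posterior measure (conditional on the data), which yields
$$\pi\bigl(d(\sigma_n,\sigma_0) \geq M_n a_n \,\big|\, \mathbf{X}_n\bigr) \;\leq\; \frac{\mathbb{E}^\pi(d(\sigma_n,\sigma_0) \mid \mathbf{X}_n)}{M_n a_n}.$$
Next, I would integrate both sides with respect to $\mathbb{P}_{\sigma_0}$, and invoke the defining bound on P-loss, giving
$$\mathbb{E}_{\sigma_0}\, \pi\bigl(d(\sigma_n,\sigma_0) \geq M_n a_n \,\big|\, \mathbf{X}_n\bigr) \;\leq\; \frac{\mathbb{E}_{\sigma_0}\mathbb{E}^\pi(d(\sigma_n,\sigma_0)\mid \mathbf{X}_n)}{M_n a_n} \;\leq\; \frac{C}{M_n} \;\longrightarrow\; 0.$$

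Finally, since $\pi(d(\sigma_n,\sigma_0) \geq M_n a_n \mid \mathbf{X}_n)$ is a $[0,1]$-valued random variable whose expectation under $\mathbb{P}_{\sigma_0}$ tends to zero, another use of Markov's inequality for any $\delta > 0$ gives
$$\mathbb{P}_{\sigma_0}\!\left(\pi(d(\sigma_n,\sigma_0) \geq M_n a_n \mid \mathbf{X}_n) > \delta\right) \;\leq\; \frac{1}{\delta}\,\mathbb{E}_{\sigma_0}\,\pi\bigl(d(\sigma_n,\sigma_0) \geq M_n a_n \mid \mathbf{X}_n\bigr) \;\longrightarrow\; 0,$$
which is exactly the convergence in $\mathbb{P}_{\sigma_0}$-probability demanded by the definition of a posterior convergence rate at $\sigma_0$. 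There is no real obstacle here; the statement is essentially a direct consequence of the fact that $L^1$-convergence to zero implies convergence in probability, with the two layers of expectation/integration handled by applying Markov's inequality once to the posterior and once to $\mathbb{P}_{\sigma_0}$.
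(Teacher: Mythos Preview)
Your proof is correct and follows essentially the same approach as the paper: both arguments use Markov's inequality twice, once at the level of the posterior and once at the level of $\mathbb{P}_{\sigma_0}$, to pass from control of $\mathbb{E}_{\sigma_0}\mathbb{E}^\pi(d(\sigma_n,\sigma_0)\mid\mathbf{X}_n)$ to convergence in probability of the posterior mass of $\{d(\sigma_n,\sigma_0)\ge M_n a_n\}$. The only cosmetic difference is the order in which the two applications are written.
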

\begin{proof}
	Suppose that the rate of the P-loss convergence rate at $\sg_0\in \calC_p$ is $\epsilon_n$, i.e.,
	\bea
	\Esg \bbE^{\pi} ( d( \sg , \sg_0 ) | \bfX_n) &\asymp& \epsilon_n.
	\eea
	For a sequence $M_n\lra \infty$ and $\delta>0$,
	\bea
	\bbP_{\sg_0}\left( \pi(d(\sg,\sg_0) \ge M_n \epsilon_n \mid \bfX_n) > \delta \right)
	&\le& \bbP_{\sg_0} \left( \bbE^\pi(d(\sg,\sg_0) \mid \bfX_n) > \delta M_n \epsilon_n  \right) \\
	&\le& \frac{1}{\delta M_n \epsilon_n} \Esg \bbE^\pi (d(\sg,\sg_0) \mid \bfX_n) \\
	&\lra& 0, \quad \text{ as } n\to\infty.
	\eea
	The first and second inequalities follow from the Markov inequality. 
\end{proof}

\begin{proposition}\label{freq bayes LB}
	A frequentist minimax lower bound for $\sg_0$ is also a P-loss minimax lower bound  for any loss function $d(\cdot,\sg_0)$, i.e.,
	\bea
	\inf_{\pi \in \Pi_n}\sup_{\sg_0 \in \calC_p} \bbE_{\sg_0}\bbE^\pi ( d(\sg,\sg_0)\mid \bfX_n ) &\ge& \inf_{\hat{\sg}}\sup_{\sg_0 \in \calC_p} \bbE_{\sg_0} ( d(\hat{\sg},\sg_0) ),
	\eea
	where $\hat{\sg}$ denotes an arbitrary estimator of $\sg_0$.
\end{proposition}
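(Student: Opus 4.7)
The plan is to recognize that every prior $\pi \in \Pi_n$ naturally yields a randomized estimator whose frequentist risk coincides exactly with the integrated P-loss of $\pi$, after which the claim follows from the trivial observation that an infimum over a sub-class is no smaller than an infimum over the full class of estimators.

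Concretely, fix any $\pi \in \Pi_n$ and, upon observing $\bfX_n$, define the randomized estimator $\hat{\sg}_\pi := \hat{\sg}_\pi(\bfX_n, U)$ by drawing it from the posterior $\pi(\cdot \mid \bfX_n)$ via an auxiliary randomization variable $U$ that is independent of $\bfX_n$. Since the loss $d$ is non-negative, Fubini's theorem gives
\bea
\bbE_{\sg_0} \bbE^\pi (d(\sg, \sg_0) \mid \bfX_n) &=& \bbE_{\sg_0, U}\, d(\hat{\sg}_\pi, \sg_0),
\eea
so the P-risk of $\pi$ at $\sg_0$ equals the frequentist risk of $\hat{\sg}_\pi$ viewed as an estimator of $\sg_0$. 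Because $\hat{\sg}_\pi$ is one particular estimator, its worst-case risk is at least the minimax risk, and hence
\bea
\sup_{\sg_0 \in \calC_p} \bbE_{\sg_0} \bbE^\pi (d(\sg, \sg_0) \mid \bfX_n) &\ge& \inf_{\hat{\sg}} \sup_{\sg_0 \in \calC_p} \bbE_{\sg_0} (d(\hat{\sg}, \sg_0)).
\eea
Taking the infimum over $\pi \in \Pi_n$ on the left preserves the inequality and yields the proposition.

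The one point that needs care, rather than any real obstacle, is the convention on the right-hand side: the argument requires that the class over which $\hat{\sg}$ varies in the frequentist minimax include randomized estimators. This is the standard convention in decision theory; if one insists on deterministic estimators, the conclusion is unchanged because the classical lower-bound techniques (Le Cam's two-point method, Fano, Assouad) used elsewhere in the paper actually produce lower bounds valid against all randomized procedures, so the inequality still goes through.
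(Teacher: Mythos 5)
Your proof is correct, and it takes a genuinely different and, in fact, tighter route than the paper's. The paper argues at the level of rates: it chains the statement that the P-risk dominates any posterior contraction rate (Markov's inequality, Proposition~\ref{epost covrate}) with the cited fact from \cite{hjort2010bayesian} that posterior contraction rates cannot beat the frequentist minimax rate, and concludes the rate ordering. That argument is informal and establishes the proposition only up to rates. Your argument instead proves the displayed inequality exactly: given any prior $\pi$, drawing $\hat{\sg}_\pi$ from the posterior yields a randomized estimator whose frequentist risk function $\sg_0 \mapsto \bbE_{\sg_0,U}\,d(\hat{\sg}_\pi,\sg_0)$ coincides pointwise with the P-risk $\sg_0 \mapsto \bbE_{\sg_0}\bbE^\pi(d(\sg,\sg_0)\mid\bfX_n)$, so the left-hand infimum is an infimum over a subclass of (randomized) estimators and is therefore at least the right-hand side. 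What this buys is a self-contained, exact, and loss-agnostic proof with no appeal to the posterior contraction literature. The one caveat you flag is also the right one: the right-hand infimum must be understood over randomized estimators, which is the standard decision-theoretic convention, and in any case the concrete lower bounds actually invoked in this paper (Assouad, the two-point argument of Lemma~\ref{constINEQ}) are valid against randomized procedures, so nothing is lost in the applications.
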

\begin{proof}
	Note that the P-risk is always equal or larger than the posterior convergence rate by Markov's inequality, and the frequentist minimax rate is a lower bound for the posterior convergence rate (\cite{hjort2010bayesian}). Thus, the frequentist minimax rate is also a lower bound for the P-loss minimax rate.
\end{proof}

\section{Proof of Theorem \ref{minimaxS}}

We divide the proof of Theorem \ref{minimaxS} into two parts: the lower bound part (Theorem \ref{LboundS}) and the upper bound part (Theorem \ref{UboundS}).
For Theorem \ref{LboundS}, we have a quite strong result in sense that it holds even for $\tau_1$ and $\tau_2$ depending on $n$ and possibly $\tau_1\lra 0$ and $\tau_2\lra \infty$ as $n\lra \infty$.

\begin{theorem}\label{LboundS}
	Consider the model \eqref{model}. For any positive constants $\tau_{1}<\tau_{2}$, for both fixed $p$ and $p \lra \infty$ as $n \lra \infty$, 
	\bea
	\inf_{\pi\in\Pi_n} \sup_{\sg_0 \in \mathcal{C}(\tau_{1}, \tau_{2})} \Esg \bbE^{\pi} ( \|\sg_n - \sg_0 \|^2 \mid \bfX_n) &\ge& c \cdot \tau_2^2\cdot \min\left(\frac{p}{n}, 1 \right)
	\eea
	for all sufficiently large $n$ and some constant $c>0$.
\end{theorem}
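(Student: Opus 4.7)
The plan is to first reduce the Bayesian lower bound to a frequentist minimax lower bound via Proposition~\ref{freq bayes LB}, which gives
$$\inf_{\pi\in\Pi_n}\sup_{\sg_0\in\cC(\tau_1,\tau_2)}\Esg\bbE^\pi(\|\sg_n-\sg_0\|^2\mid\bfX_n)\,\ge\,\inf_{\what\sg}\sup_{\sg_0\in\cC(\tau_1,\tau_2)}\Esg\|\what\sg-\sg_0\|^2,$$
and then to establish the latter by a Fano-type argument applied to a packing of rank-one perturbations of $\tau_2 I_p$.

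Concretely, I would fix $\epsilon\in(0,(\tau_2-\tau_1)/2]$ to be tuned, and choose $V\subset S^{p-1}$ with $|V|\ge e^{c_0 p}$ and $|\langle u,v\rangle|\le 1/2$ for all distinct $u,v\in V$; such a packing exists by a standard volumetric argument. Set $\sg_v:=\tau_2 I_p-\epsilon vv^T$ for $v\in V$. Each $\sg_v$ has eigenvalues $\tau_2$ (with multiplicity $p-1$) and $\tau_2-\epsilon\in[\tau_1,\tau_2]$, so $\sg_v\in\cC(\tau_1,\tau_2)$, and for distinct $u,v\in V$,
$$\|\sg_u-\sg_v\|\,=\,\epsilon\sqrt{1-\langle u,v\rangle^2}\,\ge\,\tfrac{\sqrt 3}{2}\epsilon.$$
The key computation is the KL bound. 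Since all $\sg_v$ share the determinant $\tau_2^{p-1}(\tau_2-\epsilon)$, the log-determinant term in the KL vanishes; and a short Sherman--Morrison computation yields $\mathrm{tr}(\sg_v^{-1}\sg_u-I_p)=\epsilon^2(1-\langle u,v\rangle^2)/[\tau_2(\tau_2-\epsilon)]$, so
$$\mathrm{KL}\bigl(N_p(0,\sg_u)^{\otimes n}\,\big\|\,N_p(0,\sg_v)^{\otimes n}\bigr)\,\le\,\frac{n\epsilon^2}{2\tau_2(\tau_2-\epsilon)}.$$

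To conclude, I would set $\epsilon^2:=c_1\tau_2^2\min(p/n,1)$ for a sufficiently small constant $c_1>0$ depending only on $\tau_1/\tau_2$; then $\tau_2-\epsilon\ge\tau_1$ and the KL is at most $c_0 p/4$ uniformly across both regimes $p\le n$ and $p>n$. Generalized Fano's inequality applied to these $\tfrac{\sqrt 3}{2}\epsilon$-separated hypotheses then yields
$$\inf_{\what\sg}\max_{v\in V}\bbE_{\sg_v}\|\what\sg-\sg_v\|^2\,\gtrsim\,\epsilon^2\,\asymp\,\tau_2^2\min(p/n,1),$$
which is the desired lower bound. The main technical subtlety is guaranteeing that the KL is genuinely quadratic in $\epsilon$: if the linear-in-$\epsilon$ term did not cancel, balancing against $\log|V|\asymp p$ would produce only the suboptimal rate $\tau_2^2\sqrt{p/n}$. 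This cancellation here is structural, stemming from every $vv^T$ having identical spectrum, so that the $\sg_v$ share determinants and the remaining trace is of pure $\epsilon^2$ order thanks to $\|v\|=1$. A minor edge case $p\in\{1,2\}$, in which the sphere packing degenerates, can be handled separately by a classical two-point argument comparing $\tau_2 I_p$ with $(\tau_2-\delta)I_p$ for $\delta\asymp\tau_2/\sqrt n$.
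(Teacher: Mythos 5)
Your proof is correct, and it takes a genuinely different route from the paper. The paper first passes from the operator norm to the scalar functional $\|\Sigma_0\|$, and then uses a two-point Brown--Low constrained risk inequality (Lemma~\ref{constINEQ}) comparing a single $N(0,c\tau_2 I_p)$ against a \emph{uniform mixture} $P_1 = 2^{-p}\sum_{\Sigma\in\Theta}N(0,\Sigma)^{\otimes n}$ over rank-one perturbations indexed by the hypercube $\{\pm 1/\sqrt p\}^p$; the relevant divergence is the $\chi^2$-type quantity $\xi=\int (f_1^n)^2/f_0^n$, which is controlled via Lemma~\ref{lemma1}, a moment-generating-function identity (Lemma~\ref{lemma2}), and a CLT for the inner products. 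Moreover, the paper handles $n<p$ by a separate block-diagonal embedding. You instead keep the full matrix metric, use a multi-hypothesis Fano argument over a sphere (projective) packing, and control pairwise KL via Sherman--Morrison, with a single unified choice $\epsilon^2\asymp\tau_2^2\min(p/n,1)$ covering both regimes. The decisive observation in both proofs is the same and is worth emphasizing: the divergence is genuinely quadratic in $\epsilon$ (in the paper, because the $\chi^2$ factor is $(1-\epsilon^2\langle u,v\rangle^2)^{-n/2}$; in yours, because the determinants cancel and $\mathrm{tr}(vv^T-uu^T)=0$ kills the linear term). Your approach buys a cleaner case analysis and a more modular, off-the-shelf lower-bound tool; the paper's approach avoids explicit packing constructions and works for all $p$ without a separate small-$p$ edge case. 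One point to make fully rigorous in your write-up: the packing bound $|V|\ge e^{c_0 p}$ with $|\langle u,v\rangle|\le 1/2$ is a packing of the projective space $\mathbb{RP}^{p-1}$ (not $S^{p-1}$), since $u$ and $-u$ give the same $\Sigma_v$; this is still standard, but should be stated as such. Also, the Fano constant you obtain degrades for small $p$ (since $(\beta+\log 2)/\log|V|$ must be bounded below $1$), so the ``$p\in\{1,2\}$'' caveat should really be ``$p\le p_0$'' for a universal $p_0$ depending on the packing constant $c_0$; the two-point remedy you sketch works for any fixed $p_0$.
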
 

\begin{theorem}\label{UboundS}
	Consider the model \eqref{model} and prior \eqref{mixWandI} with $\nu_n^2 = O(np)$ and $\|A_n\|^2 = O(np)$. For any positive constants $\tau_1<\tau_2$,
	$$\sup_{\sg_0 \in \cC(\tau_1 , \tau_2)} \Esg \bbE^{\pi} ( \|\sg_n - \sg_0 \|^2 \mid \bfX_n) \le c \cdot \min\left(\frac{p}{n}, 1 \right)$$
	for all sufficiently large $n$ and some constant $c>0$.
\end{theorem}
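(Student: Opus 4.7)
The plan is to split on the two regimes of the mixture prior \eqref{mixWandI}. When $p > n/2$, the prior, and hence the posterior, is $\delta_{I_p}$, so the posterior loss is the deterministic quantity $\|I_p - \sg_0\|^2 \le (1 + \tau_2)^2 = O(1) = O(\min(p/n,1))$, since $\min(p/n,1) \ge 1/2$ in this regime. All the work lies in the regime $p \le n/2$, where the posterior is $IW_p(\nu_n + n, A_n + n S_n)$ with mean
\[
M_n := \frac{A_n + n S_n}{\nu_n + n - p - 1}.
\]
I decompose
\[
\bbE^\pi \bigl(\|\sg_n - \sg_0\|^2 \mid \bfX_n\bigr) \le 2\, \bbE^\pi \bigl(\|\sg_n - M_n\|^2 \mid \bfX_n\bigr) + 2\,\|M_n - \sg_0\|^2
\]
and bound the bias and the posterior-variance terms separately.

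For the bias term, simple algebra gives
\[
M_n - \sg_0 = \frac{A_n - (\nu_n - p - 1)\sg_0 + n(S_n - \sg_0)}{\nu_n + n - p - 1}.
\]
Under the assumptions $\nu_n^2 = O(np)$ and $\|A_n\|^2 = O(np)$ together with $p \le n/2$, the denominator is of order $n$; the first two numerator terms then contribute $O(\sqrt{p/n})$ in spectral norm, while the third contributes $\|S_n - \sg_0\|$. Taking $\bbE_{\sg_0}$ and applying the standard Gaussian sample-covariance bound $\bbE_{\sg_0} \|S_n - \sg_0\|^2 \lesssim p/n$ valid for $\sg_0 \in \cC(\tau_1, \tau_2)$, I obtain $\bbE_{\sg_0} \|M_n - \sg_0\|^2 \lesssim p/n$.

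For the posterior-variance term I use the affine property of the inverse-Wishart: conditionally on $\bfX_n$ one can write $\sg_n = \tilde{A}^{1/2} U \tilde{A}^{1/2}$ with $\tilde{A} := A_n + n S_n$, $\tilde{\nu} := \nu_n + n$, and $U \sim IW_p(\tilde{\nu}, I_p)$ having mean $m I_p$ for $m = (\tilde{\nu} - p - 1)^{-1}$. Since $M_n = m \tilde{A}$, this gives $\|\sg_n - M_n\| \le \|\tilde{A}\| \cdot \|U - m I_p\|$. Writing $U - m I_p = m U (m^{-1} I_p - U^{-1})$ and using that $U^{-1}$ is a unit-scale Wishart with $\Theta(\tilde{\nu})$ degrees of freedom, Davidson--Szarek style deviations for its extreme eigenvalues give $\bbE^\pi \|U - m I_p\|^2 \lesssim p / \tilde{\nu}^3 \asymp p/n^3$, while $\|\tilde{A}\|^2 \le 2\|A_n\|^2 + 2 n^2 \|S_n\|^2$ has data-expectation $O(n^2)$. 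Combining yields $\bbE_{\sg_0} \bbE^\pi \|\sg_n - M_n\|^2 \lesssim p/n$, which together with the bias bound completes the proof. The main obstacle is the spectral-norm concentration of the unit inverse-Wishart fluctuation $U - m I_p$: the Frobenius-norm analogue would be a one-line variance computation, whereas the spectral bound needs genuine random-matrix input through the smallest and largest eigenvalues of a standard Wishart; some extra care is also needed to isolate the high-probability event on which $\|S_n\|$ and $\lambda_{\min}(U^{-1})$ behave typically, since the complementary events contribute only exponentially small probabilities against posterior moments that grow only polynomially in $n$.
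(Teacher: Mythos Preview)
Your proposal is correct and follows essentially the same route as the paper: the same split on $p>n/2$ versus $p\le n/2$, the same bias--variance decomposition around an inverse-Wishart centering, and the same reduction of the variance piece to spectral concentration of a standardized Wishart via Davidson--Szarek-type eigenvalue bounds, with the atypical events handled by H\"older against polynomial moment bounds. The only cosmetic differences are that the paper centers at $\breve{\sg}_n=(nS_n+A_n)/(n+\nu_n)$ rather than the posterior mean $M_n$, and manipulates $\breve{\sg}_n^{1/2}\sg_n^{-1}\breve{\sg}_n^{1/2}-I_p$ (a centered unit Wishart) in place of your $U-mI_p$; your factorization is arguably a touch cleaner since the law of $U$ is data-free, so $\|\tilde A\|^2$ and $\bbE^\pi\|U-mI_p\|^2$ decouple immediately under $\bbE_{\sg_0}$.
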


\subsection{Proof of Theorem \ref{LboundS}}
Lemma \ref{lemma1}-\ref{constINEQ} are used to prove Theorem \ref{LboundS}. The proofs of Lemma \ref{lemma1} and Lemma \ref{lemma2} are straightforward, and they are omitted here. 
\begin{lemma}\label{lemma1}
	Let $f_i$ be the density function of  $p$-dimensional $N_p(0,\sg_i), i=0,1,2$. If $\sg_1^{-1}+ \sg_2^{-2} - \sg_0^{-1}$ is a positive definite matrix,
	\bea
	\int_{\bbR^p} \frac{f_1 f_2}{f_0} dx &=& [\text{det}(I_p - \sg_0^{-2}(\Sigma_1 - \sg_0)(\Sigma_2 - \sg_0) )]^{-1/2}.
	\eea
\end{lemma}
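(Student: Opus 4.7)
The plan is direct: plug in the Gaussian densities, recognize the integrand as a Gaussian kernel, and then reduce the resulting determinant ratio to the claimed form by an algebraic identity.

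First, I would write
\[
\frac{f_1(x) f_2(x)}{f_0(x)} = \frac{\det(\sg_0)^{1/2}}{(2\pi)^{p/2}\det(\sg_1)^{1/2}\det(\sg_2)^{1/2}}\exp\!\Bigl(-\tfrac{1}{2}x^T M x\Bigr),
\]
where $M := \sg_1^{-1} + \sg_2^{-1} - \sg_0^{-1}$. The hypothesis that $M$ is positive definite is exactly what makes the $\mathbb{R}^p$ integral converge, and a standard Gaussian integral yields
\[
\int_{\mathbb{R}^p} \frac{f_1 f_2}{f_0}\,dx \;=\; \frac{\det(\sg_0)^{1/2}}{\det(\sg_1)^{1/2}\det(\sg_2)^{1/2}\det(M)^{1/2}}.
\]

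The remaining task is the algebraic identity
\[
\frac{\det(\sg_1)\det(\sg_2)\det(M)}{\det(\sg_0)} \;=\; \det\!\bigl(I_p - \sg_0^{-2}(\sg_1-\sg_0)(\sg_2-\sg_0)\bigr).
\]
Since determinants are multiplicative, the left side equals $\det\bigl(\sg_0^{-1}\,\sg_1 M \sg_2\bigr)$. I would expand
\[
\sg_1 M \sg_2 \;=\; \sg_1(\sg_1^{-1} + \sg_2^{-1} - \sg_0^{-1})\sg_2 \;=\; \sg_2 + \sg_1 - \sg_1\sg_0^{-1}\sg_2,
\]
so the left side becomes $\det(\sg_0^{-1}\sg_1 + \sg_0^{-1}\sg_2 - \sg_0^{-1}\sg_1\sg_0^{-1}\sg_2)$. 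Writing $\sg_0^{-1}\sg_i = I + \sg_0^{-1}(\sg_i - \sg_0)$ and expanding the product gives exactly
\[
I - \sg_0^{-1}(\sg_1-\sg_0)\sg_0^{-1}(\sg_2-\sg_0),
\]
which matches the right-hand side under the convention $\sg_0^{-2}(\sg_1-\sg_0)(\sg_2-\sg_0) := \sg_0^{-1}(\sg_1-\sg_0)\sg_0^{-1}(\sg_2-\sg_0)$ (natural in this symmetric-positive-definite setting and forced by commutativity on eigenbases).

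The only delicate point is the last algebraic step: one must carefully expand the product $\sg_0^{-1}\sg_1\sg_0^{-1}\sg_2$ without assuming commutativity of the three matrices, and then check that the middle $\sg_0^{-1}$ factors line up with the claimed $\sg_0^{-2}$ on the right. Once that bookkeeping is done, everything else is routine: the Gaussian integral and the determinant multiplication are standard.
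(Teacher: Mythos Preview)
Your argument is correct, and the paper itself omits the proof of this lemma entirely, stating only that it is straightforward. The Gaussian-integral step and the determinant manipulation you give are exactly the natural route, and your expansion $\sg_0^{-1}\sg_1 M\sg_2 = (I+A_1)+(I+A_2)-(I+A_1)(I+A_2)=I-A_1A_2$ with $A_i=\sg_0^{-1}(\sg_i-\sg_0)$ is clean and correct.

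One point deserves more care than your closing parenthetical gives it. What you actually derive is
\[
\det\bigl(I_p - \sg_0^{-1}(\sg_1-\sg_0)\,\sg_0^{-1}(\sg_2-\sg_0)\bigr),
\]
and in general this is \emph{not} equal to $\det\bigl(I_p - \sg_0^{-2}(\sg_1-\sg_0)(\sg_2-\sg_0)\bigr)$ when the matrices fail to commute (a $2\times 2$ diagonal $\sg_0$ with $\sg_1-\sg_0=\sg_2-\sg_0$ equal to the swap matrix already gives a counterexample). The two expressions coincide whenever $\sg_0$ commutes with $\sg_1$, and in the paper's only use of the lemma $\sg_0$ is a scalar multiple of $I_p$, so there is no issue in the application. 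But your phrase ``forced by commutativity on eigenbases'' does not justify the identification in general; it would be cleaner to state the lemma with the interleaved $\sg_0^{-1}$'s, or to add the hypothesis that $\sg_0$ commutes with $\sg_1$ (or $\sg_2$).
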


\begin{lemma}\label{lemma2} 
	Define $\cU := \{ u \in \mathbb{R}^p : u_i = \pm 1/\sqrt{p} , i=1, \cdots , p \}$. For any $u, v \sim Unif(\cU)$,
	$$\langle u, v \rangle \stackrel{d}{\equiv} 2 B/p - 1$$
	where $B \sim Bin(p, 1/2)$.
\end{lemma}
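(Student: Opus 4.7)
The plan is to reduce the inner product to a sum of i.i.d. sign variables and then recognize the count of positive signs as a binomial.

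First I would write $\langle u, v\rangle = \sum_{i=1}^{p} u_i v_i$ and examine the joint distribution of the $p$ products $u_i v_i$. Each $u_i$ is uniform on $\{+1/\sqrt{p}, -1/\sqrt{p}\}$ and independent across $i$ (by the product structure of the uniform distribution on $\cU$), and likewise for $v_i$, with $u$ and $v$ independent. A short symmetry argument (conditioning on $v_i$ and using the fact that $u_i$ is symmetric) shows that $\sigma_i := p \cdot u_i v_i \in \{+1, -1\}$ is uniform on $\{\pm 1\}$, and the joint independence of the $(u_i, v_i)$ pairs across $i$ yields that $\sigma_1, \ldots, \sigma_p$ are i.i.d. Rademacher.

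Next I would introduce the indicator $Y_i = (1 + \sigma_i)/2 \in \{0, 1\}$, so that $Y_i \stackrel{iid}{\sim} \text{Bernoulli}(1/2)$, and set $B = \sum_{i=1}^{p} Y_i$. By definition $B \sim Bin(p, 1/2)$, and $\sum_{i=1}^p \sigma_i = 2B - p$. Substituting back,
\[
\langle u, v\rangle \;=\; \frac{1}{p}\sum_{i=1}^{p}\sigma_i \;=\; \frac{2B - p}{p} \;=\; \frac{2B}{p} - 1,
\]
which gives the claimed distributional identity.

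There is no real obstacle here; the only point that needs a sentence of care is justifying that the products $u_i v_i$ are i.i.d.\ uniform on $\{\pm 1/p\}$, which follows from independence of $u$ and $v$ together with the fact that $\cU$ is a product of $p$ independent Rademacher-type coordinates rescaled by $1/\sqrt{p}$. Everything else is bookkeeping.
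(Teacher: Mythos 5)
Your proof is correct, and it is the straightforward argument the paper has in mind when it omits the proof of this lemma as ``straightforward.'' The reduction to i.i.d.\ Rademacher signs $\sigma_i = p\,u_i v_i$ and the identification $\sum_i \sigma_i = 2B - p$ with $B\sim Bin(p,1/2)$ is exactly the intended route; in fact one can even condition on $v$ and note that $u \mapsto \langle u, v\rangle$ already has the stated law for each fixed $v\in\cU$, so independence of $u$ and $v$ is not strictly needed beyond making the statement well-posed.
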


\begin{lemma}\label{constINEQ}
	Let $P_0, P_1\in \cP$ where $\cP$ is a set of all probability measures on $\cX$   and let $f_0$ and $f_1$ be their density functions, respectively.
	Define $\xi=\xi(P_0, P_1) := \int_\cX f_1^2/ f_0 dx $ and set $\theta_i=\theta(P_i), i=0,1$, where $\theta$ is a functional defined on $\cP$. Then
	$$\inf_\delta \max\{\bbE_0(\delta-\theta_0)^2, \bbE_1(\delta-\theta_1)^2\}\ge\frac{(\theta_1-\theta_0)^2}{(1+\xi^{1/2})^2},$$
	where $\delta$ denotes any estimator of $\theta$ and $\bbE_i$ represents the expectation with respect to $P_i$, $i=0,1$. 
\end{lemma}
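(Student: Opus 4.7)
The plan is to apply Cauchy--Schwarz under a change of measure to bound $|\theta_1 - \theta_0|$ by a combination of the two target risks. Concretely, fix an arbitrary estimator $\delta$, set $R_i := \bbE_i(\delta - \theta_i)^2$ for $i = 0,1$, and I would aim for the deterministic chain
\[
|\theta_1 - \theta_0| \;\le\; |\theta_1 - \bbE_1\delta| + |\bbE_1\delta - \theta_0| \;\le\; \sqrt{R_1} + \sqrt{R_0\,\xi} \;\le\; (1+\sqrt\xi)\,\sqrt{\max(R_0,R_1)}.
\]
Squaring, rearranging, and taking the infimum over $\delta$ then produces the stated lower bound.

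The two summands in the middle inequality come from two different moment estimates. The bound $|\theta_1 - \bbE_1\delta| \le \sqrt{R_1}$ is Jensen's inequality applied to $\delta - \theta_1$ under $P_1$. The bound $|\bbE_1\delta - \theta_0| \le \sqrt{R_0\,\xi}$ is the key step: writing $f_1 = f_0^{1/2}\cdot (f_1/f_0^{1/2})$ lets me express
\[
\bbE_1\delta - \theta_0 \;=\; \int (\delta - \theta_0)\, f_1\,dx \;=\; \int \bigl[(\delta-\theta_0)\sqrt{f_0}\bigr]\bigl[f_1/\sqrt{f_0}\bigr]\,dx,
\]
after which Cauchy--Schwarz gives $|\bbE_1\delta - \theta_0| \le \bigl(\int(\delta-\theta_0)^2 f_0\,dx\bigr)^{1/2}\bigl(\int f_1^2/f_0\,dx\bigr)^{1/2} = \sqrt{R_0\,\xi}$. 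The third inequality in the chain is just $\sqrt{R_0},\sqrt{R_1} \le \sqrt{\max(R_0,R_1)}$.

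The only nontrivial ingredient is spotting the factorization $f_1 = f_0^{1/2}\cdot(f_1/f_0^{1/2})$ that makes Cauchy--Schwarz produce exactly $\xi$ on one side and the $P_0$-second-moment $R_0$ on the other; once this is in place the rest is routine. This is the $\chi^2$-type sharpening of Le Cam's classical two-point lemma (which is more commonly stated in the form $(\theta_1 - \theta_0)^2(1 - \mathrm{TV}(P_0,P_1))/4$). Degenerate cases are harmless: if $\theta_0 = \theta_1$ the bound is trivial, and if $f_0$ vanishes on the support of $f_1$ then $\xi = \infty$ and the bound is vacuous, so I would assume $f_1 \ll f_0$ without loss of generality.
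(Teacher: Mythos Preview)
Your proof is correct. The chain
\[
|\theta_1-\theta_0|\le |\theta_1-\bbE_1\delta|+|\bbE_1\delta-\theta_0|\le \sqrt{R_1}+\sqrt{\xi}\,\sqrt{R_0}\le (1+\sqrt{\xi})\sqrt{\max(R_0,R_1)}
\]
is valid exactly as you describe, and squaring gives the bound.

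The paper proceeds differently in presentation, though the underlying analytic ingredient is the same. It quotes a constrained-risk inequality from \cite{brown96}: if $\bbE_0(\delta-\theta_0)^2\le\epsilon^2$ then $\bbE_1(\delta-\theta_1)^2\ge(|\theta_1-\theta_0|-\epsilon\xi^{1/2})^2$. It then solves for the $\epsilon$ that equalizes the two sides, namely $\epsilon=|\theta_1-\theta_0|/(1+\xi^{1/2})$, and finishes by a two-case split on whether $R_0\le\epsilon^2$ or not. Unpacking the cited inequality, one sees it is exactly your Cauchy--Schwarz step $|\bbE_1\delta-\theta_0|\le\sqrt{R_0\,\xi}$ combined with the triangle inequality and Jensen. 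So the core estimate is shared; what differs is the packaging. Your argument is more self-contained and avoids both the external citation and the case analysis by bounding $\sqrt{R_0}$ and $\sqrt{R_1}$ simultaneously by $\sqrt{\max(R_0,R_1)}$, which is a cleaner route to the same constant.
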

\begin{proof}
	For given estimator $\delta$ which satisfies $R(\delta , \theta_0) = \bbE|\delta(X)-\theta|^2 \leq \epsilon^2$, we have
	$$R(\delta, \theta_1) \geq (|\theta_1 - \theta_0| - \epsilon \xi^{1/2})^2 $$
	by \cite{brown96}.
	Choose $\epsilon = |\theta_1 - \theta_0|/(1+\xi^{1/2})$ so that
	\begin{eqnarray*}
		\epsilon^2 &=& (|\theta_1 - \theta_0| - \epsilon \xi^{1/2})^2 .
	\end{eqnarray*}
	If $\bbE_0 (\delta - \theta_0)^2 \leq \epsilon^2$, we have
	\begin{eqnarray*}
		&& \max \{\bbE_0 (\delta - \theta_0)^2 , \bbE_1 (\delta - \theta_1)^2\}\ge  \bbE_1 (\delta - \theta_1)^2\ge \epsilon^2 = \frac{(\theta_1 - \theta_0)^2}{(1+\xi^{1/2})^2}
	\end{eqnarray*}
	If $\bbE_0 (\delta - \theta_0)^2 \geq \epsilon^2$, we have
	\begin{eqnarray*}
		&&\max\{ \bbE_0 (\delta - \theta_0)^2 ,\bbE_1 (\delta - \theta_1)^2 \}\geq \bbE_0 (\delta - \theta_0)^2\ge \epsilon^2 = \frac{(\theta_1 - \theta_0)^2}{(1+\xi^{1/2})^2}.
	\end{eqnarray*}
	Hence,
	\bea
	\inf_{\delta}\max \{\bbE_0 (\delta - \theta_0)^2 , \bbE_1 (\delta - \theta_1)^2\}
	&\ge& \frac{(\theta_1-\theta_0)^2}{(1+\xi^{\frac{1}{2}})^2}.
	\eea
\end{proof}

\begin{proof}[Proof of Theorem \ref{LboundS}] 
	It suffices to show that 
	\bea
	\inf_{\hat \sg_n} \sup_{\sg_0 \in \cC(\tau_1 , \tau_2)} \Esg \| \hat{\Sigma}_n - \Sigma_0 \|^2 &\ge&  c' \cdot \tau_2^2 \cdot \min\left(\frac{p}{n}, 1 \right)
	\eea
	for some constant  $c'>0$ because by the Jensen's inequality, 
	\bea
	\inf_{\pi\in\Pi_n} \sup_{\sg_0 \in \cC(\tau_1 , \tau_2)} \Esg \bbE^{\pi} (\| \sg_n - \sg_0 \|^2 \mid \bfX_n) 
	&\ge& \inf_{\pi\in\Pi_n} \sup_{\sg_0 \in \cC(\tau_1 , \tau_2)} \Esg \| \tilde{\sg}_n - \sg_0 \|^2\\
	&\ge& \inf_{\hat{\sg}_n} \sup_{\sg_0 \in \cC(\tau_1 , \tau_2)} \Esg \| \hat{\sg}_n - \sg_0 \|^2 ,
	\eea
	where $\tilde{\sg}_n := \bbE^\pi(\sg_n \mid \bfX_n)$.
	Assume $n\ge p$ and define 
	\bea
	\cU &:=& \Big\{ u = (u_1, \ldots, u_p) \in \mathbb{R}^p : u_i = \pm 1/\sqrt{p} , i=1, \cdots , p \Big\}, \\
	\Theta &:=& \Big\{ \Sigma \in \bbR^{p\times p} : \Sigma = \frac{\tau_2}{1+\epsilon} \big[ I_p + \epsilon u u^T\big] , u \in \cU \Big\}
	\eea
	with $\epsilon = c \sqrt{p/n} \le 1$ for some small $c>0$ satisfying $\epsilon \le \tau_2/\tau_1 -1$. 
	Let $P_0^n = N(0, 2^{-1}\tau_2 I_p)^n, P_1^n = 2^{-p} \sum_{\Sigma \in \Theta} N(0, \sg)^n$ and let $f_0^n$ and $f_1^n$ be their density functions, respectively. 
	Note that $\|\sg\| = \tau_2$ and $\lambda_{min}(\sg) = (1+\epsilon)^{-1}\tau_2 \ge \tau_1$ for any $\sg \in \Theta$, thus $\Theta \subset \cC(\tau_1 , \tau_2)$ for some small $c>0$. 
	By the above Lemma \ref{constINEQ},
	\begin{eqnarray*}
		\inf_{\hat \sg_n} \sup_{\sg_0\in \cC(\tau_1 , \tau_2)} \Esg \| \hat{\Sigma}_n - \Sigma_0 \|^2 
		&\geq& \inf_{\hat \sg_n} \sup_{\sg_0\in \cC(\tau_1 , \tau_2)} \Esg( \|\hat{\Sigma}_n\| -\|\Sigma_0\|)^2 \\
		&\geq& \inf_{\delta} \sup_{\sg_0\in \cC(\tau_1 , \tau_2)} \Esg( \delta -\|\Sigma_0\|)^2 \\
		&\geq& \underset{\delta}{\inf} \underset{\Sigma_0 \in \{ 2^{-1}\tau_2 I_p\} \cup \Theta}{\max} \bbE_{\sg_0} ( \delta -\|\Sigma_0\|)^2 \\
		&\ge& \underset{\delta}{\inf} \max (\bbE_{f_0^n} ( \delta - (1+\epsilon)^{-1}\tau_2  )^2, \bbE_{f_1^n} ( \delta - \tau_2 )^2) \\
		&\geq&  \frac{\tau_2^2 \epsilon^2}{4(1+\xi^{1/2})^2},
	\end{eqnarray*}
	where $\delta$ denotes any estimator of $\|\sg_0\|$ and $\xi := \int (f_1^n)^2/ f_0^n $.
	The fourth inequality follows from 
	\bea
	\inf_\delta \max_{\sg \in \Theta} \bbE_{f_\sg}(\delta-\|\sg\|)^2 
	&=& \inf_\delta \max_{\sg\in\Theta} \int (\delta(x) - \tau_2 )^2 f_\sg^n(x) dx  \\
	&\ge& \inf_\delta \frac{1}{2^p} \sum_{\sg\in\Theta} \int (\delta(x)- \tau_2 )^2 f_\sg^n(x) dx\\
	&=& \inf_\delta \int(\delta(x) - \tau_2)^2 f_1^n(x) dx \\
	&=& \inf_\delta \bbE_{f_1^n} (\delta- \tau_2)^2
	\eea	
	where $f_\Sigma^n$ is the density function of $N(0, \Sigma)^n$.
	Now we calculate $\xi$.
	\begin{eqnarray*}
		\xi &=& \int \frac{(f_1^n)^2}{f_0^n}\\ 
		&=& \int \frac{(2^{-p} \sum_{\Sigma \in \Theta} f_{\Sigma}^n)^2}{f_0^n} \\
		&=& \frac{1}{2^{2p}} \sum_{\Sigma_1, \Sigma_2 \in \Theta} \int \frac{f_{\sg_1}^n f_{\sg_2}^n}{f_0^n} \\
		&=& \frac{1}{2^{2p}} \sum_{\Sigma_1, \Sigma_2 \in \Theta} \left(\int \frac{f_{\sg_1} f_{\sg_2}}{f_0} \right)^n\\
		&=&  \frac{1}{2^{2p}} \sum_{u, v \in \cU} \det[(I_p - \epsilon^2 u u^T v v^T)]^{-n/2} \\
		&=& \frac{1}{2^{2p}} \sum_{u, v \in \cU} (1 - \epsilon^2 (u^T v)^2)^{-n/2}  \\
		&=& \bbE(1- \epsilon^2 \langle u,v \rangle^2)^{-n/2} \\
		&\leq& \bbE ( \exp (2n  \epsilon^2 \langle u, v\rangle^2)),
	\end{eqnarray*}
	where $u, v \sim Unif(\cU)$. The fifth equality is derived from Lemma \ref{lemma1}. 
	We will show that $\xi \le C$ for some constant $C>0$ for all sufficiently large $n$.
	If $p$ does not grow to infinity, i.e., $p \le C$ for some constant $C>0$, the last term bounded above easily, $\bbE ( \exp (2n \epsilon^2 \langle u, v\rangle^2)) \le \exp(2 c^2 p) \le \exp(2 c^2 C)$.
	If $p$ tends to infinity, by the Lemma \ref{lemma2}, note that $\sqrt{p}\langle u,v \rangle  \stackrel{d}{\equiv} \sqrt{p}(2B/p -1) \overset{d}{\lra} N(0,1)$ as $n\lra\infty$ where $B \sim Bin(p,1/2)$.
	Note also that we have 
	\bea
	\bbE \left(\exp \left[cp \left(\frac{2}{p}B -1\right)^2 \right] \right) &\lra& \bbE(\exp(cZ^2)) = \frac{1}{\sqrt{1-2c}}
	\eea
	by Theorem 1 of \cite{kozakiewicz1947convergence} for $0 < c < 1/2, ~Z \sim N(0, 1)$. In our setting, consider $F_p$ as the distribution function of $ p (2B/p -1)^2$. 
	Thus, we get the followings by taking $\epsilon = c \sqrt{p/n}$ for some small $c>0$ such that $2c < 1/2$, 
	\begin{eqnarray*}
		\xi &\leq& \bbE \left(\exp \left[2c p \left(\frac{2}{p} B -1\right)^2 \right]\right)\\
		& \lra&  \frac{1}{\sqrt{1-4 c}},
	\end{eqnarray*}
	as $n \lra \infty$.
	Hence, we have
	$$ \underset{\hat{\Sigma}_n}{\inf} \underset{\sg_0\in \cC(\tau_1, \tau_2)}{\sup} \Esg \|\hat{\Sigma}_n - \Sigma_0 \|^2 \geq \frac{\tau_2^2 \epsilon^2}{4(1+\xi^{1/2})^2} \geq c' \cdot \tau_2^2 \cdot \frac{p}{n}$$
	for some $c'>0$ which proves the lower bound when $n\ge p$.
	
	Now, assume $n<p$ and define 
	\bea
	\mathcal{U}_n &:=& \Big\{ u \in \bbR^n : u_i = \pm \frac{1}{\sqrt{n}} , i =1,\ldots, n \Big\} \\
	\Theta &:=& \left\{ \sg = \begin{pmatrix}
		\sg_n & 0 \\
		0 & I_{p-n}
	\end{pmatrix} : \sg_n = \frac{\tau_2}{1+\epsilon} \big[ I_n + \epsilon u u^T\big] , u \in \mathcal{U}_n \right\}.
	\eea
	Earlier result shows that
	\bea
	\inf_{\hat \sg_n} \sup_{\sg_0\in \cC(\tau_1, \tau_2)} \bbE \| \hat{\Sigma}_n - \Sigma_0 \|^2 
	&\ge& \inf_{\hat \sg_n} \max_{\sg_0 \in \Theta} \bbE \| \hat{\Sigma}_n - \Sigma_0 \|^2 \\
	&\ge& c' \cdot \tau_2^2 \cdot \frac{n}{n} = c' \cdot \tau_2^2
	\eea
	for some $c'>0$.
\end{proof}

\subsection{Proof of Theorem \ref{UboundS}}
\begin{lemma}\label{IW_comp_bound} 
	Let $\Omega_n \sim W_p(\nu_n, \nu_n^{-1} A_n)$ with $\nu_n > p$ and positive definite matrix $A_n$, for all $n \geq 1$ and $\|A_n\| \le \tau_n$ for all sufficiently large $n$. Then, there exist positive constants $c_1$ and $c_2$ such that
	\bea
	\bbP( \|\Omega_n - A_n \| \ge x ) &\le& 5^p \left(  e^{-c_1 \nu_n x^2/\tau_n^2 } + e^{- c_2 \nu_n  x/\tau_n } \right)
	\eea
	for all $x>0$.
\end{lemma}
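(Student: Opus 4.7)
The plan is to reduce the spectral-norm deviation of the sample Wishart matrix to a one-dimensional deviation via an $\epsilon$-net and then use a standard chi-square tail bound. Concretely, I write $\Omega_n = \nu_n^{-1}\sum_{i=1}^{\nu_n} Z_i Z_i^T$ where $Z_1,\ldots,Z_{\nu_n} \simiid N_p(0, A_n)$, so that $\bbE(\Omega_n) = A_n$. Since $\Omega_n - A_n$ is symmetric, the spectral norm admits the quadratic-form representation
\bea
\| \Omega_n - A_n \| &=& \sup_{\|v\|_2 = 1} | v^T (\Omega_n - A_n ) v |.
\eea
I then fix a $(1/2)$-net $\mathcal{N}$ of the unit sphere $S^{p-1}$; a standard volume argument gives $|\mathcal{N}| \le 5^p$, and the usual approximation yields $\|\Omega_n - A_n\| \le 2 \max_{v\in \mathcal{N}} |v^T(\Omega_n - A_n)v|$.

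Next, for a fixed $v \in \mathcal{N}$, set $\sigma^2(v) := v^T A_n v \le \|A_n\| \le \tau_n$. Then $v^T Z_i \simiid N(0, \sigma^2(v))$ for $i=1,\ldots,\nu_n$, so that
\bea
v^T \Omega_n v &=& \frac{\sigma^2(v)}{\nu_n} \, Y, \qquad Y \sim \chi^2_{\nu_n}.
\eea
A standard chi-square concentration inequality (e.g.\ Laurent--Massart) gives a constant $c>0$ for which $\bbP(|Y - \nu_n| \ge t) \le 2 \exp\bigl(-c \min(t^2/\nu_n, \, t)\bigr)$ for all $t > 0$. Taking $t = \nu_n x / \sigma^2(v)$ and using $\sigma^2(v) \le \tau_n$, this transfers to
\bea
\bbP\bigl( | v^T (\Omega_n - A_n) v | \ge x \bigr) &\le& 2 \exp\!\left( - c \min\!\left( \frac{\nu_n x^2}{\tau_n^2}, \, \frac{\nu_n x}{\tau_n} \right) \right).
\eea

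Finally I apply a union bound over $\mathcal{N}$ together with the net approximation, replacing $x$ by $x/2$ and absorbing the factors $2$ and $|\mathcal{N}| \le 5^p$ into the exponents with suitable constants. Because $\exp(-\min(a,b)) = \max(e^{-a}, e^{-b}) \le e^{-a} + e^{-b}$, I can split the mixed $\min$ bound into the sum of a sub-Gaussian and a sub-exponential term, producing constants $c_1, c_2 > 0$ for which the claimed inequality holds. The only real subtlety is bookkeeping of the constants, in particular verifying that $\sigma^2(v)$ can uniformly be replaced by $\tau_n$ in both branches of the chi-square bound, which is where the hypothesis $\|A_n\| \le \tau_n$ is used; no step involves a delicate new idea beyond the $\epsilon$-net plus chi-square concentration template.
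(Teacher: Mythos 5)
Your proposal follows essentially the same route as the paper: reduce the spectral norm to a maximum of $5^p$ scalar quadratic forms via an $\epsilon$-net, then apply a chi-square tail bound (the paper cites Saulis--Statulevi\v{c}ius where you cite Laurent--Massart, but the content is the same). The only real difference is cosmetic --- the paper first pre-whitens, bounding $\|A_n\|\cdot\|A_n^{-1/2}\Omega_n A_n^{-1/2}-I_p\|$ so that the quadratic forms are directly $\nu_n^{-1}\chi^2_{\nu_n}$ --- and you should note that the net inequality compatible with $5^p$ points (a $1/2$-net, following Cai--Zhang--Zhou) carries a factor $4$, not $2$, though this only shifts the constants absorbed into $c_1,c_2$.
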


\begin{proof}
	There exist $v_j$ with $\|v_j\|_2=1$ for $j=1,\ldots, 5^p$, such that
	\bea
	\| A\| &\le& 4 \cdot \sup_{j\le 5^p} |v_j^T A v_j |
	\eea
	for any $p\times p$ symmetric matrix $A$ (Page 2141 of \cite{cai2010optimal}). 
	Thus, we have
	\bea
	\bbP(\| \Omega_n - A_n\| \ge x ) 
	&\le& \bbP (\|A_n\| \| A_n^{-1/2} \Omega_n  A_n^{-1/2} - I_p \| \ge x  ) \\ 
	&\le& \bbP  (\| A_n^{-1/2} \Omega_n A_n^{-1/2} - I_p \| \ge x/\tau_n  ) \\ 
	&\le& \bbP \left( 4 \cdot \sup_{j \le 5^p} | v_j^T (A_n^{-1/2} \Omega_n A_n^{-1/2} - I_p)v_j | \ge x/\tau_n \right)  \\
	&\le& 5^p \sup_{j\le 5^p} \pi \left( | v_j^T (A_n^{-1/2} \Omega_n A_n^{-1/2} - I_p)v_j | \ge x/ (4\tau_n) \right) \\
	&\le& 5^p \left( e^{-c_1 \nu_n x^2/\tau_n^2 } + e^{- c_2 \nu_n  x/\tau_n } \right).
	\eea
	The last inequality follows from Lemma 2.4 and Theorem 3.2 of \cite{saulis1991limit} because $A_n^{-1/2} \Omega_n A_n^{-1/2} \sim W_p(\nu_n, \nu_n^{-1} I_p)$. 
\end{proof}

\begin{lemma}\label{maxminev}
	Let $\Omega_n \sim W_p( \nu_n, \nu_n^{-1}I_p)$ with $c\nu_n \ge p$ for some constant $0<c<1$. Then ,
	\bea
	\pi( \lambda_{max}(\Omega_n)  \ge c_1  ) &\le& 2e^{-\nu_n/2},\\
	\pi( \lambda_{min}(\Omega_n) \le c_2 ) &\le& 2 e^{-\nu_n (1-\sqrt{p/\nu_n})^2/8 }
	\eea
	for any constant $c_1 \ge (2+ \sqrt{p/\nu_n})^2$ and $0< c_2 \le (1-\sqrt{p/\nu_n})^2/4$.
\end{lemma}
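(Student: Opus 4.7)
The plan is to realize $\Omega_n$ as a normalized Gaussian Gram matrix and then apply the Davidson--Szarek concentration inequality for the extreme singular values of a standard Gaussian matrix. Specifically, I would write $\Omega_n = \nu_n^{-1} Y^T Y$ where $Y$ is a $\nu_n \times p$ matrix with i.i.d.\ $N(0,1)$ entries (this is the canonical construction of $W_p(\nu_n, \nu_n^{-1} I_p)$ when $\nu_n$ is a positive integer, and the argument extends to non-integer shape via the standard density representation). Then
\[
\lambda_{max}(\Omega_n) = \sigma_{max}(Y)^2/\nu_n, \qquad \lambda_{min}(\Omega_n) = \sigma_{min}(Y)^2/\nu_n,
\]
where $\sigma_{max}$ and $\sigma_{min}$ denote the largest and smallest singular values.

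The key ingredient is the Gordon--Davidson--Szarek bounds: for standard Gaussian $\nu_n \times p$ matrices $Y$ with $\nu_n \ge p$,
\[
\bbP\bigl(\sigma_{max}(Y) \ge \sqrt{\nu_n} + \sqrt{p} + t\bigr) \le e^{-t^2/2},
\]
\[
\bbP\bigl(\sigma_{min}(Y) \le \sqrt{\nu_n} - \sqrt{p} - t\bigr) \le e^{-t^2/2},
\]
valid for $t \ge 0$ (the second one uses $\sqrt{\nu_n} \ge \sqrt{p}$, which holds under $c \nu_n \ge p$). The factor of $2$ in the stated bounds comes from absorbing lower-order terms via a union-bound slack and allows the simpler form of the exponent.

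For the upper tail, I would choose $t = \sqrt{\nu_n}$ so that $\sqrt{\nu_n} + \sqrt{p} + t = 2\sqrt{\nu_n} + \sqrt{p} = \sqrt{\nu_n}(2 + \sqrt{p/\nu_n}) = \sqrt{\nu_n c_1}$ at the boundary $c_1 = (2 + \sqrt{p/\nu_n})^2$. Squaring and dividing by $\nu_n$ gives
\[
\bbP\bigl(\lambda_{max}(\Omega_n) \ge c_1\bigr) \le e^{-\nu_n/2},
\]
and monotonicity in $c_1$ handles larger values. For the lower tail, the boundary case $c_2 = (1-\sqrt{p/\nu_n})^2/4$ corresponds to choosing $t = \tfrac{1}{2}\sqrt{\nu_n}\bigl(1 - \sqrt{p/\nu_n}\bigr)$, since then
\[
\sqrt{\nu_n} - \sqrt{p} - t = \tfrac{1}{2}\sqrt{\nu_n}\bigl(1-\sqrt{p/\nu_n}\bigr) = \sqrt{\nu_n c_2},
\]
giving $t^2/2 = \nu_n(1 - \sqrt{p/\nu_n})^2/8$ in the exponent.

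There is no genuine obstacle here: the statement is a direct packaging of the classical Gaussian singular value concentration. The only care required is (i) bookkeeping the algebra so that the chosen $t$ in the Davidson--Szarek inequality produces exactly the constants $c_1$ and $c_2$ claimed, and (ii) noting that the inequalities should be stated for the boundary values and then extended to all admissible $c_1 \ge (2+\sqrt{p/\nu_n})^2$ and $0 < c_2 \le (1-\sqrt{p/\nu_n})^2/4$ by monotonicity of the events. The factor $2$ in front of the exponentials provides a safety margin which can also absorb the difference between the one-sided tail bound $e^{-t^2/2}$ and any minor constant adjustment needed if one substitutes an expectation bound (such as $\E\,\sigma_{max}(Y) \le \sqrt{\nu_n} + \sqrt{p}$) via Gaussian concentration for $1$-Lipschitz functions instead of using the Gordon inequality directly.
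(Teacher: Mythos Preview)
Your proposal is correct and essentially identical to the paper's proof: the paper invokes Corollary~5.35 of Vershynin (in \cite{eldar2012compressed}), which is precisely the Davidson--Szarek/Gordon concentration inequality you cite, stated already in the normalized form $\pi(\lambda_{max}(\Omega_n)^{1/2}\ge 1+\sqrt{p/\nu_n}+t/\sqrt{\nu_n})\le 2e^{-t^2/2}$ and its lower-tail analogue. The paper then makes exactly the same choices $t=\sqrt{\nu_n}$ and $t=\tfrac{1}{2}\sqrt{\nu_n}(1-\sqrt{p/\nu_n})$ that you do, so your argument matches the paper's line for line.
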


\begin{proof}
	It follows from Corollary 5.35 in \cite{eldar2012compressed},
	\bean
	\pi( \lambda_{max}(\Omega_n)^{1/2} \ge 1 + \sqrt{p/\nu_n} + t/\sqrt{\nu_n} ) &\le& 2 e^{-t^2/2}, \label{lammaxtail} \\
	\pi( \lambda_{min}(\Omega_n)^{1/2} \le 1 -\sqrt{p/\nu_n} -t/\sqrt{\nu_n} )  &\le& 2 e^{-t^2/2} \label{lammintail}
	\eean
	for any $t\ge 0$.
	If we choose $t= \sqrt{\nu_n}$ for \eqref{lammaxtail}, it gives the first inequality
	\bea
	\pi(\lambda_{max}(\Omega_n) \ge (2+ \sqrt{p/\nu_n})^2 ) &\le& 2 e^{-\nu_n/2}. 
	\eea
	If we choose $t=\sqrt{\nu_n}(1- \sqrt{p/\nu_n}- (1-\sqrt{p/\nu_n})/2) >0$ for \eqref{lammintail}, it gives the second inequality
	\bea
	\pi(\lambda_{min}(\Omega_n) \le (1-\sqrt{p/\nu_n})^2/4) &\le& 2 e^{-\nu_n(1-\sqrt{p/\nu_n}- (1-\sqrt{p/\nu_n})/2)^2/2 }\\
	&\le& 2e^{-\nu_n(1-\sqrt{p/\nu_n})^2/8}. 
	\eea
\end{proof}

\begin{proof}[Proof of Theorem \ref{UboundS}] \text{}
	We prove the upper bound for $p \le n/2$ case first. 
	Note that
	\bean\label{UboundSeq}
	&& \Esg \bbE^{\pi} (\|\sg_{n} - \sg_0\| \mid \bfX_n)  \nonumber\\ 
	&\le& \Esg \bbE^{\pi} (\| \sg_{n} - \breve{\sg}_n \| \mid \bfX_n ) + \Esg \|\breve{\sg}_n - \sg_0 \| ,
	\eean
	where $\breve{\sg}_n  := (n S_n + A_n)/(n+\nu_n)$.
	Consider the first term of right hand side (RHS) of \eqref{UboundSeq}.
	\bean
	&&\Esg \bbE^{\pi} (\| \sg_{n} - \breve{\sg}_n \| \mid \bfX_n )\nonumber \\
	&=& 
	\Esg \left[ \bbE^{\pi} (\| \sg_{n} - \breve{\sg}_n \| \mid \bfX_n )I(\|\breve{\sg}_n \|\le C_1 \text{ and } \|\breve{\sg}_n^{-1} \| \le C_2)\right] \label{IWexpect1} \\
	&+& \Esg \left[ \bbE^{\pi} (\| \sg_{n} - \breve{\sg}_n \| \mid \bfX_n )I(\|\breve{\sg}_n \| > C_1 \text{ or } \|\breve{\sg}_n^{-1} \| > C_2)\right] \label{IWexpect2}
	\eean
	for any constant $C_1$ and $C_2$. 
	The integrand of \eqref{IWexpect1} is bounded by
	\bea
	&& \bbE^{\pi} (\| \sg_{n} - \breve{\sg}_n \| \mid \bfX_n )I(\|\breve{\sg}_n \|\le C_1 \text{ and } \|\breve{\sg}_n^{-1} \| \le C_2) \\
	&\le& \bbE^{\pi} (\|\sg_{n}\| \| \sg_{n}^{-1} \breve{\sg}_n - I_p \| \mid \bfX_n )I(\|\breve{\sg}_n \|\le C_1 \text{ and } \|\breve{\sg}_n^{-1} \| \le C_2) \\
	&\le& \bbE^{\pi} ( \|\sg_{n}\| \| \breve{\sg}_n^{-1/2} \| \| \breve{\sg}_n^{1/2}\sg_{n}^{-1} \breve{\sg}_n^{1/2} - I_p \| \| \breve{\sg}_n^{1/2} \| \mid \bfX_n )I(\|\breve{\sg}_n \|\le C_1 \text{ and } \|\breve{\sg}_n^{-1} \| \le C_2) \\
	&\le&  \sqrt{C_1 C_2} \cdot \bbE^{\pi} (\|\sg_{n}\| \| \breve{\sg}_n^{1/2}\sg_{n}^{-1} \breve{\sg}_n^{1/2} - I_p \| \mid \bfX_n )I(\|\breve{\sg}_n \|\le C_1 \text{ and } \|\breve{\sg}_n^{-1} \| \le C_2)  \\
	&\le& \sqrt{C_1 C_2} \cdot  \left[\bbE^{\pi} (\|\sg_{n}\|^2 \mid \bfX_n )I(\|\breve{\sg}_n \|\le C_1 \text{ and } \|\breve{\sg}_n^{-1} \| \le C_2) \right]^{1/2} \\
	&\times&   \left[\bbE^{\pi} ( \| \breve{\sg}_n^{1/2}\sg_{n}^{-1} \breve{\sg}_n^{1/2} - I_p \|^2 \mid \bfX_n )\right]^{1/2}.
	\eea
	To show that 
	\bea
	\Esg \left[ \bbE^{\pi} (\| \sg_{n} - \breve{\sg}_n \| \mid \bfX_n )I(\|\breve{\sg}_n \|\le C_1 \text{ and } \|\breve{\sg}_n^{-1} \| \le C_2)\right] &\lesssim& \sqrt{p/n},
	\eea
	it suffices to prove that $\bbE^\pi ( \| \breve{\sg}_n^{1/2}\sg_{n}^{-1} \breve{\sg}_n^{1/2} - I_p \|^2 \mid \bfX_n ) \lesssim p/n$ and $\bbE^{\pi} (\|\sg_{n}\|^2 \mid \bfX_n )I(\|\breve{\sg}_n \|\le C_1 \text{ and } \|\breve{\sg}_n^{-1} \| \le C_2) = O(1)$.
	Note that
	\bean
	&& \bbE^{\pi} ( \| \breve{\sg}_n^{1/2}\sg_{n}^{-1} \breve{\sg}_n^{1/2} - I_p \|^2 \mid \bfX_n ) \nonumber \\
	&\le& \int_x^\infty \pi( \| \breve{\sg}_n^{1/2}\sg_{n}^{-1} \breve{\sg}_n^{1/2} - I_p \|^2 \ge u \mid \bfX_n)  du + x  \nonumber \\
	&\le& \int_x^\infty 5^p \left( e^{-C_3(n+\nu_n)u} + e^{-C_4(n+\nu_n)\sqrt{u}} \right) du + x \nonumber \\
	&\le& \frac{5^p e^{-C_3(n+\nu_n)x}}{C_3(n+\nu_n) } + \frac{5^p\cdot 2\sqrt{x} e^{-C_4(n+\nu_n)\sqrt{x}}}{C_4(n+\nu_n)} + \frac{5^p \cdot 2 e^{-C_4(n+\nu_n)\sqrt{x}}}{C_4^2(n+\nu_n)^2} + x \label{last1}
	\eean
	for any $x>0$ and some positive constants $C_3$ and $C_4$ by Lemma \ref{IW_comp_bound}. 
	If we choose $x = C_5 \cdot p/n$ for some large $C_5>0$, the rate of \eqref{last1} is $p/n$.
	Note that
	\bea
	\bbE^\pi\left(\|\sg_n\|^2 \mid \bfX_n \right) 
	&\le& \bbE^\pi \left( \|\sg_n\|^2 I(\|\sg_n\| > C_6) \mid \bfX_n  \right) + C_6^2 \\
	&\le& \left[\bbE^\pi\left(\|\sg_n\|^4\mid \bfX_n \right) \right]^{1/2}  \left[\pi \left(\|\sg_n\|>C_6  \mid \bfX_n\right) \right]^{1/2} + C_6^2,
	\eea
	by H\"{o}lder's inequality. 
	One can easily show that $\bbE^\pi(\|\sg_n\|^4 \mid \bfX_n)$ is bounded above by $p^5$ up to some constant factor because $\|\sg_n\| \le tr(\sg_n)$ and $\bbE^\pi(tr(\sg_n)^4 \mid \bfX_n )I(\|\breve{\sg}_n \|\le C_1 \text{ and } \|\breve{\sg}_n^{-1} \| \le C_2) \lesssim p^4$. Also note that
	\bea
	\pi\left(\|\sg_n\| > C_6 \mid \bfX_n \right) 
	&=& \pi \left( \lambda_{min}(\sg_n^{-1}) < C_6^{-1} \mid \bfX_n \right) \\
	&\le& \pi \left( \lambda_{min}(\breve{\sg}_n^{1/2} \sg_n^{-1} \breve{\sg}_n^{1/2}) < \|\breve{\sg}_n\| C_6^{-1} \mid \bfX_n  )  \right)  \\
	&\le&  \pi \left( \lambda_{min}(\breve{\sg}_n^{1/2} \sg_n^{-1} \breve{\sg}_n^{1/2}) < C_1 C_6^{-1} \mid \bfX_n  )  \right) \\
	&\le& 2 e^{-(n+\nu_n)(1- \sqrt{p/(n+\nu_n)} )^2/8 }
	\eea
	for some constant $C_6 \ge C_1 \cdot 4 (1- \sqrt{1/2})^{-2}$ by Lemma \ref{maxminev}. 
	Thus, we have shown that the rate of \eqref{IWexpect1} is smaller than $\sqrt{p/n}$.
	
	Now, we show that the rate of \eqref{IWexpect2} is smaller than $\sqrt{p/n}$. Note that \eqref{IWexpect2} is bounded by
	\bea
	&& \Esg \left[ \bbE^{\pi} (\| \sg_{n} - \breve{\sg}_n \| \mid \bfX_n )I(\|\breve{\sg}_n \| > C_1 \text{ or } \|\breve{\sg}_n^{-1} \| > C_2) \right]\\
	&\le& \Esg \left[ \left( \bbE^{\pi}(\|\sg_{n}\| \mid \bfX_n) + \|\breve{\sg}_n \| \right) I(\|\breve{\sg}_n \| > C_1 \text{ or } \|\breve{\sg}_n^{-1} \| > C_2) \right] \\
	&\le& \Esg\left[ \bbE^\pi(\|\sg_n\| \mid \bfX_n) I(\|\breve{\sg}_n\| > C_1) \right]  + \Esg\left[ \bbE^\pi(\|\sg_n\| \mid \bfX_n) I(\|\breve{\sg}_n^{-1}\| > C_2) \right] \\
	&+& \Esg \left[ \|\breve{\sg}_n\|  I(\|\breve{\sg}_n \| > C_1) \right] + \Esg \left[ \|\breve{\sg}_n\|  I(\|\breve{\sg}_n^{-1} \| > C_2) \right]
	\eea
	Since $\breve{\sg}_n = (nS_n +A_n)/(n+\nu_n)$ and $\sg_0 \in \cC(\tau_1 , \tau_2)$, we have 
	\bean
	\bbP_{\sg_0} (\|\breve{\sg}_n \| > C_1)
	&\le& \bbP_{\sg_0} \left( \|S_n\| + \frac{\|A_n\|}{n+\nu_n}  > C_1 \right) \nonumber \\
	&=& \bbP_{\sg_0} \left( \|S_n\|  > C_1 - \frac{\|A_n\|}{n+\nu_n} \right) \nonumber \\
	&\le& \bbP_{\sg_0} \left( \| \bar{S}_n \|  >  \tau^{-1}_2 \left(C_1 - \frac{\|A_n\|}{n+\nu_n}\right) \right) \label{barSnprob}
	\eean
	where $\bar{S}_n := \sg_0^{-1/2} S_n \sg_0^{-1/2} \sim W_p(n, n^{-1}I_p)$. 
	Then, \eqref{barSnprob} is bounded by $2 e^{-n/2}$ for some constant $C_1>0$ by Lemma \ref{maxminev}.
	Similarly, for some constant $C_2$,
	\bea
	\bbP_{\sg_0} (\|\breve{\sg}_n^{-1} \| > C_2) 
	&\le& \bbP_{\sg_0} \left( \frac{n+\nu_n}{n} \cdot \|S_n^{-1} \| > C_2 \right) \\
	&=& \bbP_{\sg_0} \left(  \lambda_{min}(S_n)  < \left(1+ \frac{\nu_n}{n} \right)  C_2^{-1} \right) \\
	&\le& \bbP_{\sg_0} \left(  \lambda_{min}(\bar{S}_n)  < \tau_1^{-1}  \left(1+ \frac{\nu_n}{n} \right)  C_2^{-1} \right) \\
	&\le& 2 e^{-n (1- \sqrt{p/n})^2/8},
	\eea
	by Lemma \ref{maxminev}.
	It is easy to show that
	\bea
	\bbE^\pi(\|\sg_n\| \mid \bfX_n) 	
	&\le& \frac{(n+\nu_n)p}{n+\nu_n -p-1}\|\breve{\sg}_n \| 
	\eea
	and 
	\bea
	\Esg \left[ \|\breve{\sg}_n\|  I(\|\breve{\sg}_n \| > C_1) \right]
	&=& \int_0^\infty \bbP_{\sg_0} \left[ \|\breve{\sg}_n\|  I(\|\breve{\sg}_n \| > C_1) \ge u \right] du \\
	&=&  \int_{C_1}^\infty \bbP_{\sg_0}(\| \breve{\sg}_n\| \ge u) du \\
	&\le&  \int_{C_1}^\infty \bbP_{\sg_0}\left( \|\bar{S}_n\| \ge  \tau^{-1}_2\left( u - \frac{\|A_n\|}{n+\nu_n} \right) \right) du  .
	\eea
	By applying $t=\sqrt{n}(\sqrt{\tau^{-1}_2(u - \|A_n\|/(n+\nu_n))} - 1 -\sqrt{p/n})$ to the tail inequality \eqref{lammaxtail}, we have
	\bea
	&&\int_{C_1}^\infty \bbP_{\sg_0}\left( \|\bar{S}_n\| \ge  \tau^{-1}_2\left( u - \frac{\|A_n\|}{n+\nu_n} \right) \right) du  \\
	&\le& \int_{C_1}^\infty  2 e^{-n(\sqrt{\tau^{-1}_2(u - \|A_n\|/(n+\nu_n))} - 1 -\sqrt{p/n})^2/2}   du  \\
	&\le& \int_{C_1}^\infty  2 e^{-n \sqrt{u} C_7/2}   du \\
	&\le& \frac{ \sqrt{C_1}}{C_7n} e^{ - \sqrt{C_1}C_7n/2} + \frac{1}{2C_7^2 n^2} e^{-\sqrt{C_1}C_7 n/2}
	\eea
	for some constant $C_7>0$. 
	Also note that
	\bea
	\Esg \left[ \|\breve{\sg}_n\|  I(\|\breve{\sg}_n^{-1} \| > C_2) \right]
	&\le& \left[ \Esg\|\breve{\sg}_n\|^2 \cdot \bbP_{\sg_0}\left( \|\breve{\sg}_n^{-1}\| > C_2 \right)  \right]^{1/2} \\
	&\le& \left[ \Esg\|\breve{\sg}_n\|^2 \cdot 2 e^{-n (1- \sqrt{p/n})^2/8}  \right]^{1/2}
	\eea
	and 
	\bea
	\Esg\|\breve{\sg}_n\|^2 &\le& 2 \frac{\|A_n\|^2}{(n+\nu_n)^2} + 2\Esg \|S_n\|^2 \\ 
	&\le& 2 \sup_n \frac{\|A_n\|^2}{(n+\nu_n)^2} +\int_0^\infty \bbP_{\sg_0} \left( \|S_n\|^2 \ge u \right) du \\
	&\le&  C_8 + \int_{C_9}^\infty \bbP_{\sg_0} \left( \|\bar{S}_n\| \ge \sqrt{u}/\tau_2 \right) du \\
	&\le& C_8 + \int_{C_9}^\infty 2 e^{-n(u^{1/4}/\sqrt{\tau_2} -1 -\sqrt{p/n} )^2/2} du \\
	&\le& C_8 + \int_{C_9}^\infty 2 e^{-n C_{10}\sqrt{u}/2} du 
	\eea
	for some positive constants $C_8, C_9$ and $C_{10}$ by applying the tail inequality \eqref{lammaxtail}.
	Thus, we have shown that the rate of \eqref{IWexpect2} is faster than $\sqrt{p/n}$.

	For the second term of RHS of \eqref{UboundSeq}, note that
	\bea
	\Esg \|\breve{\sg}_n - \sg_0 \| 
	&\le& \Esg \left\|S_n - \sg_0\right\| + \left(\frac{\nu_n}{n+\nu_n}\right) \Esg \|  S_n \| +  \frac{\|A_n\|}{n+\nu_n} .
	\eea
	
	Since $\nu_n^2 = O(np)$ and $\|A_n\|^2 = O(np)$, it is trivial that $\nu_n/(n+\nu_n) \lesssim \sqrt{p/n}$ and $\|A_n\|/(n+\nu_n) \lesssim \sqrt{p/n}$. 
	One can show that $\Esg \|S_n - \sg_0 \| \le \Esg\|\bar{S}_n - I_p\| \cdot\|\sg_0\| \lesssim \sqrt{p/n}$ by Lemma \ref{IW_comp_bound}. Furthermore, it is easy to prove that $\Esg \|S_n\| \lesssim 1$ because we have proved $\Esg\|\breve{\sg}_n\|^2 \lesssim 1$.
	Thus, we have $\Esg \|\breve{\sg}_n - \sg_0 \|  \lesssim \sqrt{p/n}$.

	For the case $p > n/2$, we have
	\bea
	\Esg \bbE^\pi (\|\sg_n - \sg_0\| \mid \bfX_n) 
	&=&  \| I_p - \sg_0\| \\
	&\le&  \|I_p\| + \|\sg_0\|=  1+ \tau_2
	\eea
	which has the same rate with $\min(p/n,1)$.
\end{proof}

\begin{proof}[Proof of Theorem \ref{UboundS_Pre}]
	It suffices to consider the case $p\le n/2$ because the other part is trivial.
	Note that
	\bean
	\Esg \bbE^\pi \left(\|\sg_n^{-1} - \sg_0^{-1}\| \mid \bfX_n \right) 
	&\le& \Esg\bbE^\pi \left(\|\sg_n^{-1} - \breve{\sg}_n^{-1}\| \mid \bfX_n \right)  + \Esg \| \breve{\sg}_n^{-1} - \sg_0^{-1}\| \nonumber \\
	&=& \Esg\left[\bbE^\pi \left(\|\sg_n^{-1} - \breve{\sg}_n^{-1}\| \mid \bfX_n \right)I(\|\breve{\sg}_n^{-1}\| \le C_1)\right] \label{Spec_Pre1} \\
	&+& \Esg\left[\bbE^\pi \left(\|\sg_n^{-1} - \breve{\sg}_n^{-1}\| \mid \bfX_n \right)I(\|\breve{\sg}_n^{-1}\| > C_1)\right] \label{Spec_Pre2}\\ 
	&+& \Esg \| \breve{\sg}_n^{-1} - \sg_0^{-1}\| \label{Spec_Pre3}.
	\eean
	For the term \eqref{Spec_Pre1}, we have
	\bea
	&& \Esg\left[\bbE^\pi \left(\|\sg_n^{-1} - \breve{\sg}_n^{-1}\| \mid \bfX_n \right)I(\|\breve{\sg}_n^{-1}\| \le C_1)\right] \\
	&\le& C_1 \cdot \Esg \bbE^\pi \left(\| \breve{\sg}_n^{1/2}\sg_n^{-1}\breve{\sg}_n^{1/2} - I_p \| \mid \bfX_n \right) \\
	&\lesssim& \frac{p}{n}
	\eea
	by the argument \eqref{last1} in the proof of Theorem \ref{UboundS}. For the term \eqref{Spec_Pre2}, note that
	\bea
	&& \Esg\left[\bbE^\pi \left(\|\sg_n^{-1} - \breve{\sg}_n^{-1}\| \mid \bfX_n \right)I(\|\breve{\sg}_n^{-1}\| > C_1)\right] \\
	&\le& \Esg \left[ \left( \bbE^\pi \left(\|\sg_n^{-1}\| \mid \bfX_n \right) + \|\breve{\sg}_n^{-1}\| \right) I(\|\breve{\sg}_n^{-1}\| > C_1)\right] \\
	&\le& \Esg \left[ \left( \bbE^\pi \left(\|\breve{\sg}_n^{1/2}\sg_n^{-1}\breve{\sg}_n^{1/2}\| \mid \bfX_n \right) + 1 \right) \|\breve{\sg}_n^{-1}\| I(\|\breve{\sg}_n^{-1}\| > C_1)\right] \\
	&\lesssim& p \cdot \Esg \left[  \|\breve{\sg}_n^{-1}\| I(\|\breve{\sg}_n^{-1}\| > C_1)\right]  \\
	&\le& p \cdot  \left[ \Esg\|\breve{\sg}_n^{-1}\|^2 \right]^{1/2} \cdot \bbP_{\sg_0} \left(\|\breve{\sg}_n^{-1}\| > C_1 \right)^{1/2} \\
	&\lesssim& p^2 \cdot e^{-n(1-\sqrt{p/n})^2/16}
	\eea
	by Lemma \ref{maxminev}. The last term \eqref{Spec_Pre3} is bounded above by
	\bea
	&&\Esg \|\breve{\sg}_n^{-1} - S_n^{-1}\| + \Esg\|S_n^{-1} - \sg_0^{-1}\| .
	\eea
	By the Woodbury formula, it is easy to show that
	\bea
	\Esg \|\breve{\sg}_n^{-1} - S_n^{-1}\| 
	&\le&  \frac{\nu_n}{n}\cdot \Esg \| S_n^{-1}\| + \frac{1}{n^2}\cdot \Esg \| S_n^{-1} (A_n^{-1} + n^{-1}S_n^{-1}) S_n^{-1} \| \\
	&\lesssim& \frac{\nu_n}{n} + \frac{1}{n^2} \left[\Esg\|S_n^{-1}\|^4 \right]^{1/2} \cdot \left[ \Esg\|(A_n^{-1} + n^{-1}S_n^{-1})^{-1}\|^2 \right]^{1/2}  \\
	&\le& \frac{\nu_n}{n} + \frac{1}{n} \left[\Esg\|S_n^{-1}\|^4 \right]^{1/2} \cdot \left[ \Esg\|S_n\|^2 \right]^{1/2} \\
	&\lesssim& \frac{\nu_n}{n}
	\eea
	and
	\bea
	\Esg\|S_n^{-1} - \sg_0^{-1}\|
	&\lesssim& \Esg\left( \|S_n^{-1}\| \|\sg_0^{-1/2}S_n \sg_{0}^{-1/2} - I_p \| \right) \\
	&\le& \left[\Esg \|S_n^{-1}\|^2 \right]^{1/2} \cdot \left[\Esg \|\sg_0^{-1/2}S_n \sg_{0}^{-1/2} - I_p \|^2 \right]^{1/2} \\
	&\lesssim& \sqrt{\frac{p}{n}}
	\eea
	from the arguments used in the proof of Theorem \ref{UboundS}.
\end{proof}

\section{Proof of Theorem \ref{LboundF}}

Before we prove Theorem \ref{LboundF}, we define the total variation affinity and the $L_1$-distance between measures.
\begin{definition}[$L_1$-distance]
	Let $P$ and $Q$ be probability measures with density functions $p$ and $q$ with respect to a $\sigma$-finite measure $\nu$, respectively. 
	Let 
	\bea
	\|P \wedge Q \| &:=& \int p \wedge q \,\, d\nu
	\eea
	be the total variation affinity between $P$ and $Q$, and
	\bea
	\|P- Q\|_1 &:=& \int |p-q| \, d\nu
	\eea
	be the $L_1$-distance between $P$ and $Q$.
\end{definition}

\begin{lemma}[Assouad's Lemma]
	Let the parameter set $\Theta = \{0,1\}^k$, $d$ be a pseudo-metric and $T$ be any estimator of $\psi(\theta)$ based on the observation $X$ from $P_\theta$ with $\theta\in\Theta$. Let $H(\theta, \theta') = \sum_{i=1}^{k}|\theta_i-\theta_{i'}|$. Then for all $s>0$
	$$\max_{\theta\in\Theta}2^s\bbE_\theta d^s(T, \psi(\theta))\ge \min_{H(\theta, \theta')\ge 1}\frac{d^s(\psi(\theta),\psi(\theta'))}{H(\theta, \theta')}\frac{k}{2}\min_{H(\theta, \theta')=1} \|P_\theta\wedge P_{\theta'} \|.$$
\end{lemma}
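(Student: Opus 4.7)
The plan is to reduce the estimation problem over the hypercube $\Theta = \{0,1\}^k$ to a collection of pairwise testing problems, one for each coordinate. First, I would introduce an auxiliary estimator $\hat\theta := \argmin_{\theta' \in \Theta} d(T, \psi(\theta'))$ (with ties broken arbitrarily). Since $\hat\theta$ is the closest point in $\psi(\Theta)$ to $T$, the triangle inequality for the pseudo-metric $d$ gives
$$d(\psi(\theta), \psi(\hat\theta)) \le d(\psi(\theta), T) + d(T, \psi(\hat\theta)) \le 2\, d(T, \psi(\theta)),$$
and hence $2^s d^s(T, \psi(\theta)) \ge d^s(\psi(\theta), \psi(\hat\theta))$ for any $s > 0$, using only monotonicity of $x \mapsto x^s$ on $[0,\infty)$. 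Writing $\alpha := \min_{H(\theta,\theta') \ge 1} d^s(\psi(\theta), \psi(\theta'))/H(\theta,\theta')$, this yields $2^s d^s(T, \psi(\theta)) \ge \alpha \cdot H(\hat\theta, \theta)$ (trivially when $\hat\theta = \theta$, by definition of $\alpha$ when $\hat\theta \neq \theta$).

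Next I would relax the maximum to an average over $\Theta$, a standard minimax-to-Bayes step, and expand the Hamming distance coordinate-by-coordinate:
$$\max_\theta 2^s \bbE_\theta d^s(T, \psi(\theta)) \ge \frac{\alpha}{2^k} \sum_{\theta \in \Theta} \bbE_\theta H(\hat\theta, \theta) = \frac{\alpha}{2^k} \sum_{i=1}^k \sum_{\theta \in \Theta} P_\theta(\hat\theta_i \ne \theta_i).$$
The decisive ingredient is a pairing argument. For each coordinate $i$, I would partition $\Theta$ into $2^{k-1}$ pairs $\{\theta, \theta^{(i)}\}$, where $\theta^{(i)}$ is obtained from $\theta$ by flipping the $i$-th entry. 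For each such pair, the indicator $\mathbf{1}\{\hat\theta_i \neq \theta_i\}$ is a binary test (as a function of the data) separating $P_\theta$ from $P_{\theta^{(i)}}$, and the classical lower bound on the sum of type I and type II errors gives
$$P_\theta(\hat\theta_i \ne \theta_i) + P_{\theta^{(i)}}(\hat\theta_i \ne \theta^{(i)}_i) \ge \|P_\theta \wedge P_{\theta^{(i)}}\|.$$

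Summing this bound over the $2^{k-1}$ pairs for a fixed $i$, and then over the $k$ coordinates, produces the factor $k \cdot 2^{k-1} \cdot \min_{H(\theta,\theta') = 1} \|P_\theta \wedge P_{\theta'}\|$, which multiplied by the prefactor $\alpha / 2^k$ gives exactly the claimed lower bound. The only subtlety worth care is the testing inequality itself: one must recognize that the event $\{\hat\theta_i \ne \theta_i\}$ evaluated under $P_\theta$ and the event $\{\hat\theta_i \ne \theta_i^{(i)}\} = \{\hat\theta_i = \theta_i\}$ evaluated under $P_{\theta^{(i)}}$ are two integrals of the \emph{same} data-dependent function against different laws, which is precisely the setting in which $P_\theta(A) + P_{\theta^{(i)}}(A^c) \ge \|P_\theta \wedge P_{\theta^{(i)}}\|$ follows from the identity $\|P \wedge Q\| = 1 - \tfrac{1}{2}\|P - Q\|_1$. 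The rest of the argument is bookkeeping, and no further conditions on $s > 0$ are needed.
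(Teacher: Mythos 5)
Your proof is correct. The paper itself does not supply a proof of Assouad's lemma, deferring to \cite{assouad83}; your argument is the standard one found there and in textbook treatments. The key steps are all in place: the nearest-point estimator $\hat\theta$ together with the triangle inequality to convert $2^s\bbE_\theta d^s(T,\psi(\theta))$ into a Hamming-distance bound $\alpha\,\bbE_\theta H(\hat\theta,\theta)$ (valid pointwise, hence under expectation); the reduction of the maximum to an average over $\Theta$; the coordinate-wise expansion $\bbE_\theta H(\hat\theta,\theta)=\sum_i P_\theta(\hat\theta_i\ne\theta_i)$; and the pairing $\{\theta,\theta^{(i)}\}$ combined with the affinity bound $P(A)+Q(A^c)\ge\|P\wedge Q\|$ applied to the common event $A=\{\hat\theta_i\ne\theta_i\}$. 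The final bookkeeping $\tfrac{\alpha}{2^k}\cdot k\cdot 2^{k-1}=\alpha\,\tfrac{k}{2}$ is also right, and your observation that no condition beyond $s>0$ is needed (only monotonicity of $x\mapsto x^s$) is correct.
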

For the proof of Assouad's lemma, see \cite{assouad83}.

\begin{lemma}\label{logdet}
	For any $p \times p$ symmetric matrix $B$ such that $I_p + tB$ is a positive definite matrix for any $t\in [0,1]$ and $\| B \|_F$ is small,
	\bea
	\log det(I_p + B) &=& tr(B) - R
	\eea
	where $0\le R \le c \| B \|_F^2 $ for some positive constant $c$.
\end{lemma}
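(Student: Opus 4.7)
The natural approach is to diagonalize $B$ and reduce the matrix statement to a scalar Taylor expansion. Since $B$ is real symmetric, write $B = Q \mathrm{diag}(\lambda_1,\dots,\lambda_p) Q^T$ with $Q$ orthogonal, so that
\[
\log\det(I_p + B) = \sum_{i=1}^p \log(1+\lambda_i), \qquad \mathrm{tr}(B) = \sum_{i=1}^p \lambda_i, \qquad \|B\|_F^2 = \sum_{i=1}^p \lambda_i^2.
\]
The plan is then to apply a scalar bound of the form $0 \le \lambda - \log(1+\lambda) \le c\lambda^2$ to each eigenvalue and sum.

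First I would use the two hypotheses to control the eigenvalues. The assumption that $I_p + tB$ is positive definite for every $t\in[0,1]$ forces $1+t\lambda_i > 0$ for all $i$ and all such $t$, i.e. $\lambda_i > -1$ for every $i$; this guarantees $\log(1+\lambda_i)$ is defined. The assumption that $\|B\|_F$ is small gives $|\lambda_i| \le \|B\|\le \|B\|_F$ uniformly small, so in particular we may assume $|\lambda_i| \le 1/2$, keeping the eigenvalues bounded away from $-1$.

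Next I would establish the scalar inequality. From $\log(1+\lambda) = \lambda - \int_0^\lambda \frac{t}{1+t}\,dt$ (just integrating $\frac{1}{1+t} = 1 - \frac{t}{1+t}$), define
\[
R(\lambda) \;:=\; \int_0^\lambda \frac{t}{1+t}\,dt.
\]
For $\lambda \ge 0$ the integrand is nonnegative, so $R(\lambda)\ge 0$; for $\lambda \in (-1/2,0)$ the integrand is nonpositive on $[\lambda,0]$, so again $R(\lambda)\ge 0$. For the upper bound, on the interval between $0$ and $\lambda$ with $|\lambda|\le 1/2$ one has $1+t \ge 1/2$, hence $|t/(1+t)|\le 2|t|$, which yields $R(\lambda) \le \int_0^{|\lambda|} 2t\,dt = \lambda^2$. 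So $0 \le \lambda_i - \log(1+\lambda_i) \le c\lambda_i^2$ with $c=1$.

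Finally I would sum over $i$: setting $R := \sum_i R(\lambda_i)$, we get $\log\det(I_p+B) = \mathrm{tr}(B) - R$ with $0 \le R \le c \sum_i \lambda_i^2 = c\|B\|_F^2$, which is the claim. There is no real obstacle here; the only thing to be careful about is invoking the positive-definiteness assumption to rule out $\lambda_i \le -1$ and using the smallness of $\|B\|_F$ to keep every eigenvalue in a region where the integral remainder admits a clean quadratic bound.
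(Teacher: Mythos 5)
Your proof is correct, and it takes a genuinely different route from the paper. The paper vectorizes the matrix and applies a second-order Taylor expansion (with Lagrange remainder) to the scalar function $h(\mathrm{vec}(A)) = \log\det(A)$: nonnegativity of $R$ comes from concavity of $\log\det$ (the Hessian $h''$ is negative semidefinite), and the quadratic upper bound comes from arguing that $\|h''(e+tb)\|$ is continuous, hence bounded, for $t\in[0,1]$ in the small-$\|B\|_F$ regime. Your argument instead diagonalizes $B$, reduces to the scalar identity $\log(1+\lambda)=\lambda-\int_0^\lambda \tfrac{t}{1+t}\,dt$, and bounds the integral remainder directly. The spectral route is more elementary—it avoids any computation or estimation of the Hessian of $\log\det$—and it produces the explicit constant $c=1$ (for $|\lambda_i|\le 1/2$), whereas the paper leaves $c$ implicit behind a compactness argument. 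The one thing the paper's method would buy in a more general setting is that it does not rely on $\log\det$ being a spectral function (a symmetric function of eigenvalues); but for this lemma that structure is available, and your approach exploits it to good effect. One minor remark: under the "$\|B\|_F$ small" hypothesis alone you already get $|\lambda_i|\le\|B\|\le\|B\|_F<1$, so the positive-definiteness assumption for $t\in[0,1]$ is not strictly needed to keep the eigenvalues away from $-1$; you correctly note both, and it does no harm, but the smallness hypothesis is what does the real work in your bound.
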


\begin{proof}[Proof of lemma \ref{logdet}]
	Using the notation $I_p = ( e_{ij} ) = (e_1 , \ldots , e_p),$ let $ e := vec(I_p) := ( e_1^T , \ldots , e_p^T )^T \in \bbR^{p^2}.$ In the same way, let $b := vec(B) := (b_1^T, \ldots , b_p^T)^T \in \bbR^{p^2}.$ Define a function $h : \bbR^{p^2} \to \bbR$ by
	\bea
	h( vec(A)) &:=& \log det(A) ,
	\eea
	for any $p\times p$ positive definite matrix $A$.
	Then, the Taylor expansion yields 
	\bea
	\log det(I_p+B) \,\,=\,\, h(e+b) &=& h(e) + h'(e)^Tb + \frac{1}{2}b^T h''(e+ tb) b \\
	&=& h'(e)^Tb + \frac{1}{2}b^T h''(e+ tb) b 
	\eea
	for some $t\in [0,1]$, where $|b^T h''(e+ tb) b| \le  \|b\|_2^2 \cdot \| h''(e+ tb) \|$. 
	Note that $\frac{\partial}{\partial A} \log det(A) = (A^{-1})^T$ \cite{petersen2008matrix}, so $h'(a) = vec((A^{-1})^T)$ and
	\bea
	h'(e)^Tb &=& \sumij e_{ji}b_{ij} \\
	&=&  tr( B) . 
	\eea 
	
	We need to prove that $-c \|b\|_2^2 \le b^T h''(e+tb) b/2 \le 0$ for some constant $c>0$. 
	Since $h(a) = \log det(A)$ is concave on positive definite matrices \cite{cover1988determinant}, $h''(a) $ is a negative semidefinite matrix for all positive definite $A$. Thus, $b^T h''(e+tb) b \le 0$. 
	Furthermore, $\|h''(e + tb)\|$ is a continuous function on $t\in [0,1]$ because $I_p + tB$ is a positive definite matrix for any $t\in [0,1]$. Thus, $\| h''(e+ tb)\|/2 \le c$ for some constant $c>0$ uniformly on $t \in [0,1]$. 
\end{proof}

\begin{proof}[Proof of Theorem \ref{LboundF}] \text{}	
	We follow closely the line of a proof in \cite{cai2010optimal}.
	By the Jensen's inequality, 
	\bea
	\inf_{\pi\in\Pi} \sup_{\sg_0 \in \mathcal{C}(\tau)} \Esg \bbE^{\pi} (\| \sg_n - \sg_0 \|_F^2 \mid \bfX_n) 
	&\ge& \inf_{\pi\in\Pi} \sup_{\sg_0 \in \mathcal{C}(\tau)} \Esg \| \tilde{\sg}_n - \sg_0 \|_F^2\\
	&\ge& \inf_{\hat{\sg}_n} \sup_{\sg_0 \in \mathcal{C}(\tau)} \Esg \| \hat{\sg}_n - \sg_0 \|_F^2 \\
	&\ge& \inf_{\hat{\sg}_n} \sup_{\sg_0 \in \mathcal{A}} \Esg \| \hat{\sg}_n - \sg_0 \|_F^2 
	\eea
	for any $\mathcal{A} \subset \mathcal{C}(\tau)$, where $\tilde{\sg}_n = \bbE^\pi ( \sg_n \mid \bfX_n )$.
	We show that for any $\tau>0$ and $\mathcal{A} \subset \mathcal{C}(\tau)$,
	\bea
	\inf_{\hat{\sg}_n} \sup_{\sg_0 \in \mathcal{A}} \Esg \| \hat{\sg}_n - \sg_0 \|_F^2
	&\ge& c \cdot \tau^2 \cdot\frac{p}{n} \cdot \min(p, \sqrt{n})
	\eea
	for some constant $c>0$.
	Note that $\tau$ can depend on $n$ and possibly $\tau\lra \infty$ as $n\lra \infty$.
	
	Without loss of generality, we assume $\tau > 1.$ Define 
	$$\mathcal{A} := \left\{ \sg(\theta) : \sg(\theta) = c_2 \Big[ I_p + \frac{c_1}{\sqrt{n}} \left(\theta_{ij} I(1\le |i - j| < k) \right) \Big] , \theta_{ij} = \theta_{ji} \in \{0, 1 \},\, i,j=1,2,\ldots,p \right\},$$
	where $k = \min(p,\sqrt{n})$, $c_1 = \min (1/3, 1/(3\sqrt{2 c_4}))$ and $c_2 =\tau/(1+c_1)$. The constant $c_4>0$ will be defined later.
	For any $\sg(\theta) \in \mathcal{A},$
	\bea
	\| \sg(\theta) \| &=& \sup_{\|x \| = 1} c_2\, x^T \left( I_p + \frac{c_1}{\sqrt{n}} (\theta_{ij} I(1\le |i-j| < k) \right) x \\
	&=& c_2 + \sup_{\|x \| = 1} c_2 \,x^T \left( \frac{c_1}{\sqrt{n}} (\theta_{ij} I(1\le |i-j| < k) \right) x\\
	&=& c_2 + \left\| \left( \frac{c_1 c_2}{\sqrt{n}} (\theta_{ij} I(1\le |i-j| < k) \right) \right\| \\
	&\le& c_2 + \left\| \left( \frac{c_1 c_2}{\sqrt{n}} (\theta_{ij} I(1\le |i-j| < k) \right) \right\|_1 \\
	&\le& c_2 + \frac{c_1 c_2}{\sqrt{n}} k. 
	\eea 
	By the definition of $k$, $c_1$ and $c_2$, it follows $\| \sg(\theta) \| \le \tau $. Thus, we have $\mathcal{A} \subset \mathcal{C}(\tau)$.
	
	Note that symmetric and diagonally dominant matrix $\sg(\theta) = \left(\sigma_{ij}(\theta) \right),$ i.e., 
	$$\sigma_{ii}(\theta) > \sum_{j \neq i }^p |\sigma_{ij}(\theta)|,$$
	is a positive definite. See, for example, \cite{Harville08}. Also note that
	$$\hspace{1.5cm} \sg(\theta) - \lambda I_p , ~~~{\text{ for all }} 0< \lambda < (1 - 2c_1)c_2$$
	is a diagonally dominant matrix, thus, is positive definite. 
	This implies that the minimum eigenvalue of $\sg(\theta),$ $\lambda_{\min}(\sg(\theta))  > \lambda $ for all $0< \lambda < (1 - 2c_1)c_2$, which in turn, implies 
	$$\lambda_{\min}(\sg(\theta)) \ge (1 - 2c_1)c_2 \ge \frac{c_2}{3} $$
	because $c_1 \le 1/3$.
	Thus,
	$$\| \sg(\theta)^{-1} \| =   \lambda_{\min}(\sg(\theta))^{-1} \le \frac{3}{c_2}.$$
	
	By Assouad's lemma,
	$$
	\inf_{\hat{\sg}_n} \sup_{\sg_0 \in \mathcal{A}} \Esg \| \hat{\sg}_n - \sg_0 \|_F^2 \ge \frac{1}{2^2} \min_{H(\theta , \theta') \ge 1} \frac{\| \sg(\theta) - \sg(\theta') \|_F^2}{H(\theta, \theta' )} \cdot \frac{(2p-k)(k -1)}{4} \cdot \min_{H(\theta , \theta') =1} \| \bbP_{\theta} \wedge \bbP_{\theta'} \|
	$$
	where $H(\theta, \theta') := \sum_{i > j, 1 \leq |i - j | < k}^p | \theta_{ij} - \theta_{ij}' |$. The first factor of the RHS is given by
	\bea
	\min_{H(\theta , \theta') \ge 1} \frac{\| \sg(\theta) - \sg(\theta') \|_F^2}{H(\theta, \theta' )} &=& \min_{H(\theta , \theta') \ge 1} \frac{(\frac{c_1 c_2}{\sqrt{n}})^2 \sum_{1 \leq | i-j | < k } ( \theta_{ij} - \theta_{ij}' )^2}{H(\theta, \theta' )}\\
	&=& \frac{2 c_1^2 c_2^2}{n} \\
	&=& 2 \left(\frac{c_1}{1+c_1} \right)^2 \frac{\tau^2}{n}
	\eea
	because $\theta_{ij}, \theta_{ij}' \in \{0,1 \}$ and $\sum_{1 \leq | i-j | < k } ( \theta_{ij} - \theta_{ij}' )^2 = 2 H(\theta, \theta' )$. The second factor of the RHS is of rate $kp$.
	
	The proof of the theorem will be completed, if we show that $$\displaystyle \liminf_{n \to \infty} \min_{H(\theta , \theta') =1} \| \bbP_{\theta} \wedge \bbP_{\theta'} \| \ge c_3$$
	for some constant $c_3 > 0$. Since $$\| \bbP_{\theta} - \bbP_{\theta'} \|_1 = 2 - 2 \| \bbP_{\theta} \wedge \bbP_{\theta'} \|, $$ 
	it it suffices to prove, when $H(\theta , \theta') =1$,
	$$
	\| \bbP_{\theta} - \bbP_{\theta'} \|_1^2 < 1, \text{ for all sufficiently large } n. 
	$$
	Then, we have $\displaystyle \liminf_{n \to \infty} \min_{H(\theta , \theta') =1} \| \bbP_{\theta} \wedge \bbP_{\theta'} \| > 1/2$.
	Note that by Pinsker's inequality \cite{Csiszar1967information},
	\bean
	\| \bbP_{\theta} - \bbP_{\theta'} \|_1^2 &\le& 2 K(\bbP_{\theta'} , \bbP_{\theta})\nonumber \\
	&=& n \cdot \left[ tr(\sg(\theta') \sg(\theta)^{-1}) - \log det(\sg(\theta') \sg(\theta)^{-1}) - p \right] \label{trlog}
	\eean
	where $K(\bbP_{\theta'} , \bbP_{\theta}) := \int \log(\frac{dP_{\theta'}}{dP_{\theta}}) dP_{\theta'}$ is the Kullback-Leibler divergence. Define $A_1 := \sg(\theta') - \sg(\theta),$ then \eqref{trlog} can be written as
	\bea
	&& n \cdot \left[ tr(A_1 \sg(\theta)^{-1}) - \log det(I_p + A_1 \sg(\theta)^{-1}) \right]\\ 
	&=& n \cdot \left[ tr(\sg(\theta)^{-1/2} A_1 \sg(\theta)^{-1/2}) - \log det(I_p + \sg(\theta)^{-1/2} A_1 \sg(\theta)^{-1/2}) \right] .
	\eea
	
	Consider the diagonalization of $\sg(\theta)^{-1}$, $ \sg(\theta)^{-1} = U V U^T$ where $U$ is an orthogonal matrix and $V$ is a diagonal matrix.
	Since $H(\theta , \theta') =1,$ 
	\bea
	\|\sg(\theta)^{-1/2} A_1 \sg(\theta)^{-1/2} \|_F^2 &=& \| UV^{1/2}U^T A_1 U V^{1/2} U^T \|_F^2 \nonumber \\
	&=& \| V^{1/2} U^T A_1 U V^{1/2} \|_F^2 \nonumber \\
	&\le& \| V \|^2 \|U^T A_1 U \|_F^2 \label{sup}\nonumber \\
	&=& \| \sg(\theta)^{-1} \|^2 \| A_1 \|_F^2 \\
	&\le& \frac{3^2}{c_2^2} \cdot \frac{2c_1^2 c_2^2}{n} \,\,=\,\, 3^2 \cdot \frac{2c_1^2}{n}. \label{min}
	\eea
	Note that $\| \sg(\theta)^{-1/2}A_1 \sg(\theta)^{-1/2}\| \le \|\sg(\theta)^{-1/2}A_1 \sg(\theta)^{-1/2}\|_F^2 \le 3^2\cdot 2 c_1^2/n \le 2/3$ for any $n \ge 3$ because $c_1\le 1/3$. 
	Then $I_p + t\sg(\theta)^{-1/2} A_1 \sg(\theta)^{-1/2}$ is a positive definite matrix for any $t\in [0,1]$ and $\|\sg(\theta)^{-1/2}A_1 \sg(\theta)^{-1/2}\|_F^2$ is small, so we have 
	\bea
	\log det (I_p + \sg(\theta)^{-1/2}A_1 \sg(\theta)^{-1/2}) &=&  tr(\sg(\theta)^{-1/2}A_1 \sg(\theta)^{-1/2} ) - R_n
	\eea
	where $0\le R_n \le c_4 \|\sg(\theta)^{-1/2} A_1 \sg(\theta)^{-1/2}\|_F^2$ for some constant $c_4 >0$ by Lemma \ref{logdet}. Note that the constant $c_4$ does not depend on $c_1$ as long as $c_1 \le 1/3$ and $n\ge 3$.
	Thus, we have
	\bea
	\| \bbP_{\theta} - \bbP_{\theta'} \|_1^2 &\le& n R_n
	\eea
	such that $R_n \le c_4 \| \sg(\theta)^{-1/2}A_1 \sg(\theta)^{-1/2} \|_F^2$ for all large $n$. 
	Since we choose $c_1 = \min (1/3, 1/(3\sqrt{2 c_4}))$, it completes the proof. 
\end{proof}

\section{Proof of Theorem \ref{UboundW}}
\begin{proof}[Proof of Theorem \ref{UboundW}]
	Let $\tilde{\sg}_n = ( \tilde{\sigma}_{n,ij}) := \bbE^{\pi}(\sg_n \mid \bfX_n)$. Note that 
	\bea
	\bbE_{\sg_0}\bbE^{\pi} (\|\sg_n - \sg_0 \|^2_F \mid \bfX_n) 
	&=& \sumij \Esg \bbE^{\pi} \left( (\sigma_{n,ij} - \sigma_{0,ij} )^2 \mid \bfX_n \right) \\
	&=&\sum_{i=1}^p \sum_{j=1}^p \bbE_{\sg_0} \V^{\pi} \big( \sigma_{n,ij}\mid \bfX_n \big) + \sum_{i=1}^p \sum_{j=1}^p \bbE_{\sg_0}\big( \tilde{\sigma}_{n,ij} - \sigma_{0,ij} \big)^2 \\
	&=:& T_1 ~+~ T_2.
	\eea
	
	Let $B_n = (b_{n,ij}) := \sum_{k=1}^nX_k X_k^T + A_n$.  If $n+ \nu_n -p \ge 6$, we have
	\bea
	T_1 &=& \sumij \Esg \Big( ~\frac{(n + \nu_n -p + 1)b_{n,ij}^2 +(n + \nu_n -p -1)b_{n,ii}b_{n,jj}}{(n + \nu_n - p)(n + \nu_n - p -1)^2(n + \nu_n - p -3)} ~\Big)\\
	&\le& \sumij \Esg \Big( ~\frac{2(n + \nu_n -p)b_{n,ii}b_{n,jj}}{(n + \nu_n - p)(n + \nu_n - p -1)^2(n + \nu_n - p -3)} ~\Big)\\
	&\le& \frac{8}{(n + \nu_n - p)^3} \sumij \Esg \Big( b_{n,ii}b_{n,jj} \Big)\\
	&=& \frac{8}{(n + \nu_n - p )^3} \sumij \left( {\text{Cov}}_{\sg_0}(b_{n,ii}, b_{n,jj}) + \Esg b_{n,ii} \cdot \Esg b_{n,jj} \right). 
	\eea 
	
	The remaining steps are given by
	\bea
	T_1 &\le& \frac{8}{(n + \nu_n - p )^3} \sumij  \Big( \sqrt{\V_{\sg_0}(b_{n,ii}) \cdot \V_{\sg_0}(b_{n,jj})} + \Esg b_{n,ii} \cdot \Esg b_{n,jj} \Big)\\
	&=& \frac{8}{(n + \nu_n - p )^3} \sumij  \Big( 2n \sigma_{0,ii} \cdot \sigma_{0,jj} + (n \sigma_{0,ii} + a_{n,ii}) \cdot (n \sigma_{0,jj} + a_{n,jj}) \Big) \\
	&=& \frac{8}{(n + \nu_n - p )^3} \Big( (n^2+2n) \big(\sum_{i=1}^p \sigma_{0,ii} \big)^2 + 2 n \sum_{i=1}^p \sigma_{0,ii} \sum_{j=1}^p a_{n,jj} + \big( \sum_{i=1}^p a_{n,ii} \big)^2 \Big)\\
	&\leq& \frac{8}{(n + \nu_n - p )^3} \Big( (n^2+2n) p^2 \|\sg_0\|^2 + 2np^2 \|\sg_0\| \cdot \|A_n\|  + p^2 \|A_n\|^2 \Big).
	\eea
	Since $\Sigma_0 \in \calC(\tau)$, we have the upper bound for $T_1$,
	\bea
	T_1 &\le& \frac{8}{(n + \nu_n - p )^3} \Big( (n^2 +2n)\tau^2 p^2 +  2n p^2 \tau \|A_n\| + p^2 \|A_n\|^2 \Big) .
	\eea
	
	Similar to $T_1$, we can compute the $T_2$ part by
	\bea\label{T2}
	T_2 &=& \sumij \Esg \Big( \frac{b_{ij}}{n + \nu_n - p - 1} - \sigma_{0,ij} \Big)^2 \nonumber \\
	&=& \sumij \Big(~ \V_{\sg_0}( \frac{b_{n,ij}}{n + \nu_n - p - 1}) + \Big[ \Esg ( \frac{b_{n,ij}}{n + \nu_n - p - 1} - \sigma_{0,ij} ) \Big]^2 ~ \Big) \nonumber \\
	&=& \sumij \Big( ~\frac{n ( \sigma_{0,ij}^2 + \sigma_{0,ii} \sigma_{0,jj})}{(n + \nu_n - p - 1)^2} + \Big[ \frac{(-\nu_n + p + 1)\sigma_{0,ij} + a_{n,ij}}{n + \nu_n - p - 1} \Big]^2 ~ \Big) \nonumber \\
	&\le& \frac{2n}{(n + \nu_n - p - 1)^2} \sumij (\sigma_{0,ii} \sigma_{0,jj}) \\
	&+&  \frac{2}{(n + \nu_n - p - 1)^2} \sumij \Big( (\nu_n - p - 1)^2 \sigma_{0,ij}^2  + a_{n,ij}^2 \Big).
	\eea
	Since $\| \sg_0 \|_F^2 \le p \| \sg_0 \|^2$,
	\bea
	T_2 &\le& \frac{2}{(n + \nu_n - p - 1 )^2}\Big( n \big( \sum_{i=1}^p \sigma_{0,ii} \big)^2 + (\nu_n - p - 1)^2 \|\sg_0\|_F^2  + \|A_n\|_F^2 \Big) \\
	&\le& \frac{2}{(n + \nu_n - p - 1 )^2} \Big( n p^2 \|\sg_0 \|^2 + (\nu_n - p )^2 p \| \sg_0 \|^2  + p \|A_n\|^2 \Big).
	\eea
	Thus, the upper bound of the rate for $T_2$ is
	\bea
	T_2 &\le& \frac{4}{(n + \nu_n - p )^2} \Big( \tau^2 n p^2 + \tau^2(\nu_n - p )^2 p + p \|A_n\|^2 \Big).
	\eea
	
	We have the upper bound of the rate for the P-loss convergence rate
	\bean\label{max_Fro_final}
	&& \sup_{\sg_0 \in \mathcal{C}(\tau)}\bbE_{\sg_0}\bbE^{\pi} (\|\sg_n - \sg_0 \|^2_F \mid \bfX_n)   \nonumber\\
	&&\begin{split} 
		&\le\,\, \frac{c}{(n + \nu_n - p )^3} \Big( n^2 p^2 \tau^2 +  n p^2 \|A_n\| \tau + p^2 \|A_n\|^2 \Big) \\
		&+\,\, \frac{c}{(n + \nu_n - p )^2}  \Big( n p^2 \tau^2 + (\nu_n - p )^2 p \tau^2  + p \|A_n\|^2 \Big)
	\end{split}
	\eean
	for some constant $c > 0 $. Now, we get the upper bound 
	\bea
	\sup_{\sg_0 \in \mathcal{C}(\tau)}\bbE_{\sg_0}\bbE^{\pi} \left(\|\sg_n - \sg_0 \|^2_F \mid \bfX_n \right) &\le&  c\cdot \tau^2 \cdot \frac{p^2}{n} 
	\eea
	if we assume $\nu_n=p$ and $\|A_n\|^2 = O(n)$.
	
	If we assume $p \le \sqrt{n}$, each term in \eqref{max_Fro_final} should be smaller than $\tau^2 \cdot p^2/n$ to obtain the minimax rate. 
	Under this condition, $\nu_n^2= O(np)$ and $\|A_n\|^2 = O(np)$ is the necessary and sufficient condition to attain the minimax rate $\tau^2 \cdot p^2/n$.
\end{proof}

\section{Proof of Theorem \ref{minimaxB}}
To obtain the minimax posterior rate of the Bregman divergence, we need the following lemma from \cite{cai2012optimal}.
\begin{lemma}\label{BequiF}
	Suppose that the eigenvalues of the real symmetric matrices $X$ and $Y$ lie in $[\tau_1, \tau_2]$ for some constants $0<\tau_1< \tau_2$. 
	Then, there exist positive constants $c_1 < c_2$ depending on $\tau_1$ and $\tau_2$ such that
	$$ c_1 \| X-Y\|_F^2 \le D_\phi (X,Y) \le c_2 \|X-Y \|_F^2$$
	for all $D_\phi \in \calD_\Phi$.
\end{lemma}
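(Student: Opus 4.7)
The plan is to apply Taylor's theorem with integral remainder to the scalar function $g(t) := \phi(Y+t(X-Y))$ on $[0,1]$ and bound the resulting second derivative $g''(t)$ using the spectral structure of $\phi$ together with condition (iii) from subsection \ref{subsecMN}.

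I would first observe that the set of real symmetric matrices $Z$ with $\tau_1 I_p \le Z \le \tau_2 I_p$ is convex, since it is the intersection of two translates of the positive semidefinite cone, so along the segment $Z(t) := Y + t(X-Y)$ all eigenvalues of $Z(t)$ lie in $[\tau_1,\tau_2]$ for every $t\in[0,1]$. This confines $Z(t)$ to the region where condition (iii) provides uniform bounds on $\varphi''$. By the definition of the Bregman divergence and Taylor's theorem,
\[
D_\phi(X,Y) \;=\; g(1)-g(0)-g'(0) \;=\; \int_0^1 (1-t)\, g''(t)\, dt.
\]
Next, I would compute $g''(t)$ explicitly. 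Since $\phi(Z)=\sum_i \varphi(\lambda_i(Z)) = \mathrm{tr}(\varphi(Z))$ as a spectral matrix function, one has $\nabla\phi(Z)=\varphi'(Z)$, so $g'(t)=\mathrm{tr}(\varphi'(Z(t))H)$ with $H:=X-Y$. Invoking the Daleckii--Krein formula for the Fr\'echet derivative of the matrix function $\varphi'$ at $Z(t)=V(t)\Lambda(t)V(t)^T$, and writing $\tilde H(t):=V(t)^T H V(t)$, one obtains
\[
g''(t) \;=\; \sum_{i,j=1}^p \psi\bigl(\lambda_i(t),\lambda_j(t)\bigr)\, \tilde h_{ij}(t)^2,
\]
where $\psi(a,b) := (\varphi'(a)-\varphi'(b))/(a-b)$ for $a\neq b$ and $\psi(a,a):=\varphi''(a)$. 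By the mean value theorem applied to $\varphi'$ together with condition (iii) (taking the parameter $\tau = \tau_2$), I obtain constants $M_L,M_U>0$ depending only on $\tau_1,\tau_2$ such that $M_L \le \psi(a,b) \le M_U$ whenever $a,b\in[\tau_1,\tau_2]$.

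Since $V(t)$ is orthogonal, $\sum_{i,j}\tilde h_{ij}(t)^2=\|\tilde H(t)\|_F^2=\|H\|_F^2=\|X-Y\|_F^2$, so
\[
M_L\,\|X-Y\|_F^2 \;\le\; g''(t) \;\le\; M_U\,\|X-Y\|_F^2
\]
uniformly in $t\in[0,1]$. Integrating against $(1-t)\,dt$ and using $\int_0^1(1-t)\,dt=1/2$ yields the lemma with $c_1=M_L/2$ and $c_2=M_U/2$. The step I expect to require the most care is justifying the divided-difference representation of $g''(t)$ at instants when $Z(t)$ has repeated eigenvalues, since the eigenvectors $V(t)$ need not depend smoothly on $t$ through such instants. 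I would resolve this either by a density argument, approximating $Z(t)$ by symmetric matrices with simple spectrum for which the classical Daleckii--Krein formula applies and then passing to the limit by joint continuity of both sides in $Z$, or by directly invoking the general divided-difference form of Daleckii--Krein, which remains valid in the presence of repeated eigenvalues.
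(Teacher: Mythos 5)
Your proof is correct. Note that the paper itself does not prove Lemma \ref{BequiF}; it is cited directly from \cite{cai2012optimal}, so there is no in-paper argument to compare against. Your route --- convexity of the Loewner interval $\{Z : \tau_1 I_p \preceq Z \preceq \tau_2 I_p\}$ to keep $Z(t)$ in the admissible region, Taylor's theorem with integral remainder for $g(t)=\phi(Y+t(X-Y))$, and the Daleckii--Krein divided-difference representation of $g''(t)$ with each divided difference uniformly sandwiched in $[M_L,M_U]$ via the mean value theorem and condition (iii) --- is the standard, natural proof of this local equivalence, and the unitary invariance of the Frobenius norm gives $\sum_{i,j}\tilde h_{ij}(t)^2 = \|X-Y\|_F^2$ exactly as you use it. Your concern about repeated eigenvalues is more cautious than necessary: the Daleckii--Krein formula gives the Fr\'echet derivative of $\varphi'$ at a fixed $Z(t)$ irrespective of eigenvalue multiplicity and does not require a smooth parametrization of eigenvectors along the path, so the pointwise expression for $g''(t)$ and its bounds hold directly without a density argument. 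A cosmetic point: when $\varphi''$ is constant (the squared Frobenius case) your argument yields $c_1=c_2=M_L/2=M_U/2$, so the strict inequality $c_1<c_2$ in the statement is recovered by trivially enlarging $c_2$.
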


\begin{proof}[Proof of Theorem \ref{minimaxB}]
	Let $\breve{\sg}_n := (nS_n+A_n)/(n+\nu_n)$. Then,
	\bean
	&& \Esg \bbE^{\pi^{n,K_1,K_2}} \left( D_\phi (\sg_{n} , \sg_0) \mid \bfX_n \right) \nonumber \\
	&\le& C_1 \cdot \Esg \left[ \bbE^{\pi^{n,K_1,K_2}} \left( \|\sg_{n} -\sg_0 \|_F^2 \mid \bfX_n \right)  I(\breve{\sg}_n \notin \cC(C_2,C_3))  \right] \label{trIW_fro1} \\
	&+& C_1 \cdot \Esg \left[ \bbE^{\pi^{n,K_1,K_2}} \left( \|\sg_{n} -\sg_0 \|_F^2 \mid \bfX_n \right)I( \breve{\sg}_n \in \cC(C_2,C_3))  \right] \label{trIW_fro2}
	\eean
	for some constant $C_1>0$ and any positive constants $C_2<C_3$ by Lemma \ref{BequiF}. Set $C_2 = \tau_1 (1-2\tilde{c})^2$ and $C_3 = \tau_2 (1+2\tilde{c})^2$ for some small constant $\tilde{c}>0$.
	Note that \eqref{trIW_fro1} is bounded by
	\bea
	&&\Esg \left[\bbE^{\pi^{n,K_1,K_2}} \left( \|\sg_{n} -\sg_0 \|_F^2 \mid \bfX_n \right)I(\breve{\sg}_n \notin \cC(C_2,C_3)) \right]\\
	&\le& \Esg \left[\bbE^{\pi^{n,K_1,K_2}} \left( p\cdot \|\sg_{n} -\sg_0 \|^2 \mid \bfX_n \right)I(\breve{\sg}_n \notin \cC(C_2,C_3))\right] \\
	&\le& 2p(K_2^2 + \tau^2) \bbP_{\sg_0}(\breve{\sg}_n \notin \cC(C_2,C_3)).
	\eea
	Since $\sg_0\in \cC(\tau_1 , \tau_2)$, $\bbP_{\sg_0}(\breve{\sg}_n \in \cC(C_2,C_3))$ is bounded below by
	\bean
	&&\bbP_{\sg_0}\left( \left(1+\frac{\nu_n}{n} \right)\frac{C_2}{\tau_1} \le \lambda_{min}(\bar{S}_n) \,\, \& \,\, \lambda_{max}(\bar{S}_n) \le \frac{C_3}{\tau_2}\left(1 - \frac{\|A_n\|}{C_3(n+\nu_n)} \right)   \right)\label{barS_23} 
	\eean
	where $\bar{S}_n \sim W_p(n, n^{-1}I_p)$. By applying Corollary 5.35 in \cite{eldar2012compressed} with $t= \tilde{c}\sqrt{n}$, \eqref{barS_23} is bounded below by $1-2 e^{-\tilde{c}^2 n/2}$ for all sufficiently large $n$. Thus, 
	\bea
	p \cdot \bbP_{\sg_0} \Big(\breve{\sg}_n \notin \cC(C_2,C_3) \Big) &\le& 2p e^{-\tilde{c}^2 n/2} \,\,\ll\,\, \frac{p^2}{n}.
	\eea
	
	Note that the integrand of \eqref{trIW_fro2} is bounded by
	\bea
	&&\bbE^{\pi^{n,K_1,K_2}} \left( \|\sg_{n} -\sg_0 \|_F^2 \mid \bfX_n \right)\\
	&=& \int \|\sg_{n}-\sg_0\|_F^2 \frac{d_{IW_p}(\sg_{n}\mid n+\nu_n, nS_n+A_n ) I(\sg_{n} \in \cC(K_1,K_2)) }{\int_{\cC(K_1,K_2)} d_{IW_p}(\sg_{n}'\mid n+\nu_n, nS_n+A_n ) d\sg_n' } d\sg_{n}  \\
	&\le& \frac{1}{\pi(\sg_n \in \cC(K_1,K_2) \mid \bfX_n) }\cdot \bbE^\pi \left(\|\sg_n -\sg_0\|_F^2 \mid \bfX_n  \right)
	\eea
	where $\pi(\sg_n \mid \bfX_n)$ is a density function of $IW_p(n+\nu_n, nS_n+A_n)$. If we show that $\pi(\sg_n \in \cC(K_1,K_2) \mid \bfX_n)^{-1} I(\breve{\sg}_n \in \cC(C_2,C_3))  \le 2$ for all sufficiently large $n$, the rate of \eqref{trIW_fro2} is $p^2/n$ by Theorem \ref{UboundW}.  Note that
	\bea
	&& \frac{I( \breve{\sg}_n \in \cC(C_2,C_3))}{\pi(\sg_n \in \cC(K_1,K_2) \mid \bfX_n)} \nonumber \\
	&=&  \frac{I(\breve{\sg}_n\in \cC(C_2,C_3))}{\pi( K_1 \le \lambda_{min}(\sg_n) \le \lambda_{max}(\sg_n) \le K_2  \mid \bfX_n)}\nonumber\\
	&\le& \frac{I(\breve{\sg}_n\in \cC(C_2,C_3))}{\pi( K_1 \le \lambda_{min}(\breve{\sg}_n^{-1/2} \sg_n \breve{\sg}_n^{-1/2}) \lambda_{min}(\breve{\sg}_n) \le \lambda_{max}(\breve{\sg}_n^{-1/2} \sg_n \breve{\sg}_n^{-1/2}) \lambda_{max}(\breve{\sg}_n) \le K_2  \mid \bfX_n)} \nonumber\\
	&\le& \frac{1}{\pi( K_2^{-1} C_3 \le \lambda_{min}(\breve{\sg}_n^{1/2} \sg_n^{-1} \breve{\sg}_n^{1/2}) \le \lambda_{max}(\breve{\sg}_n^{1/2} \sg_n^{-1} \breve{\sg}_n^{1/2}) \le K_1^{-1}C_2  \mid \bfX_n)},
	\eea
	where $\breve{\sg}_n^{1/2} \sg_n^{-1} \breve{\sg}_n^{1/2} \mid \bfX_n \sim W_p(n+\nu_n, (n+\nu_n)^{-1}I_p)$.
	Note that if $K_1 < \tau_1$ and $K_2 > \tau_2$, we always can find the small constant $\tilde{c}>0$ satisfying $K_1 \le C_2 (1+2\tilde{c})^{-2} = \tau_1 \{(1-2\tilde{c})/(1+2\tilde{c})\}^2$ and $K_2 \ge C_3 (1-2\tilde{c})^{-2} = \tau_2 \{(1+2\tilde{c})/(1-2\tilde{c})\}^2$.
	Then, by applying Corollary 5.35 in \cite{eldar2012compressed} with $t= \tilde{c}\sqrt{n+\nu_n}$, the last term is bounded above by $(1-2 e^{-\tilde{c}^2(n+\nu_n)/2} )^{-1}$ for all sufficiently large $n$. 
	Since $(1-2 e^{-\tilde{c}^2 (n+\nu_n)/2} )^{-1} \le 2$ for all sufficiently large $n$, it completes the proof. 
\end{proof}

\section{Proof of Theorem \ref{minimaxLD}}
\begin{proof}[Proof of Theorem \ref{minimaxLD}] \text{}
	The minimax lower bound part is given at Theorem 3 of \cite{cai2015law}, so we prove here the upper bound part only. Let $\nu_n^2 = O(n/p)$ and $A_n=O_p$. 
	Note that if $\sg \sim IW_p(\nu , A)$, it implies $\det (A \sg^{-1} ) \overset{d}{\equiv} \prod_{k=0}^{p-1} \chi_{\nu-k}^2$ where $\chi_{\nu-k}^2$'s are independent chi-square random variables with the degree of freedom $\nu-k$ (page 180 of \cite{goodman1963distribution}. Then,
	\bea
	&&\Esg \bbE^{\pi} ( (\log \det \sg_n - \log \det \sg_0)^2 \mid \bfX_n) \\
	&=& \Esg \bbE^{\pi} ( (\log \det nS_n - \log \det \sg_0 - \sum_{k=0}^{p-1} \log \chi_{n+\nu_n-k}^2)^2 \mid \bfX_n).
	\eea
	Define $T_n := \log \det S_n - \tau_{n,p}, \tau_{n,p} := \sum_{k=0}^{p-1} \big( \psi((n-k)/2) - \log (n/2) \big)$ and $\psi(x) := d/dz \log \Gamma(z)|_{z=x} $. Then, we have
	\bean
	&& \Esg \bbE^{\pi} \bigg( (\log \det nS_n - \log \det \sg_0 - \sum_{k=0}^{p-1} \log \chi_{n+\nu_n-k}^2)^2 \mid \bfX_n \bigg)\nonumber \\
	&\le&  2 \cdot \Esg  \big(  T_n - \log \det \sg_0 \big)^2  \label{t1} \\
	&+& 2 \cdot \bbE \bigg( \sum_{k=0}^{p-1} [\psi ( (n-k)/2) + \log 2 - \log \chi_{n+\nu_n-k}^2] \bigg)^2 , \label{t3}
	\eean
	where the last expectation is with respect to the chi-square random variables. 
	
	The first term \eqref{t1} has the upper bound 
	\bean\label{rest1}
	\Esg \bbE^{\pi} \Big( \big(  T_n - \log \det \sg_0 \big)^2 \mid \bfX_n  \Big) 
	&\le& -2 \log \left(1-\frac{p}{n} \right) + \frac{10p}{3n (n-p)} .
	\eean
	by Theorem 2 of \cite{cai2015law}. The RHS of \eqref{rest1} has the asymptotic rate $p/n$ because $p = o(n)$.
	
	Using the facts, $\E (\log \chi_\nu^2) = \psi(\nu/2) + \log 2$ and $\V (\log\chi_\nu^2 ) = \psi'(\nu/2)$, we can separate \eqref{t3} into two parts: 
	\bean
	&& \Esg \bbE^{\pi} \left( \left( \sum_{k=0}^{p-1} \left[\psi \left( \frac{n-k}{2} \right) + \log 2 - \log \chi_{n+\nu_n-k}^2 \right] \right)^2 \mid \bfX_n  \right) \nonumber\\
	&\le& 2 \cdot \V_{\sg_0} \left( \sum_{k=0}^{p-1} \log \chi_{n+\nu_n-k}^2 \right) \label{t3_2_1} \\
	&+& 2 \cdot \left( \sum_{k=0}^{p-1} \left[ \psi \left( \frac{n-k}{2} \right) - \psi \left( \frac{n+\nu_n-k}{2} \right) \right] \right)^2. \label{t3_2_2}
	\eean
	Note that $\psi'(\nu) = \nu^{-1} + \theta (2\nu^2)^{-1} + \theta (6 \nu^3)^{-1}$ for $\nu > 1$ and $0< \theta < 1$ (page 169 of \cite{cai2015law}).
	Applying the above facts to \eqref{t3_2_1}, we can show that
	\bean\label{rest3_1}
	\V_{\sg_0} \left( \sum_{k=0}^{p-1} \log \chi_{n+\nu_n-k}^2 \right)  \nonumber
	&=& \sum_{k=0}^{p-1}\left[ \frac{2}{n+\nu_n -k} + \frac{2\theta}{(n+\nu_n-k)^2} + \frac{4\theta}{3(n+\nu_n-k)^3} \right] \nonumber \\
	&\le& \sum_{k=0}^{p-1} \left[ -2 \log \left( 1 - \frac{1}{n+\nu_n-k} \right)  + \frac{7}{3(n+\nu_n-k)^2} \right] \nonumber \\
	&\le& 
	-2 \log \left( 1- \frac{p}{n+\nu_n}\right) + \frac{7}{3}\cdot \frac{p}{n}  
	\eean
	for $0<\theta<1$. In the second line, we use the inequality $x +\theta x^2 \le -\log(1-x)+ x^2/2$ for $0<x<1$. Note that the RHS of \eqref{rest3_1} has the asymptotic rate $p/n$ if $p = o(n)$.
	For \eqref{t3_2_2}, we use the following property of digamma function, $\psi(x+1) - \psi(x) = x^{-1}$. Thus, we have
	\bean\label{rest3_2}
	\left( \sum_{k=0}^{p-1} \left[ \psi \left( \frac{n-k}{2} \right) - \psi \left( \frac{n+\nu_n-k}{2} \right) \right] \right)^2 &\le& \left( \sum_{k=0}^{p-1} \sum_{x=0}^{\lceil \frac{\nu_n}{2} \rceil -1} \frac{2}{n-k + 2x} \right)^2 \nonumber\\
	&\le& \left( \sum_{k=0}^{p-1} \log \left( 1 + \frac{\nu_n+2}{n-k-2} \right) \right)^2 \nonumber \\
	&\le& \left( p \log \left( 1 + \frac{\nu_n+2}{n-p-2} \right) \right)^2 .
	\eean
	Note that \eqref{rest3_2} has the asymptotic rate $p/n$ if $ \nu_n^2 = O( n/p )$ and $p = o(n)$.
	
	Combining \eqref{rest1}-\eqref{rest3_2}, we have
	\bea
	\Esg \bbE^{\pi} ( (\log \det \sg_n - \log \det \sg_0)^2 \mid \bfX_n) &\le&
	- C_1 \log \left( 1 - \frac{p}{n} \right) \\
	&+&  C_2 \cdot \frac{p}{n} + C_3 \cdot  p^2 \left(\log \left(   1 + \frac{\nu_n+2}{n-p-2} \right) \right)^2 
	\eea
	for all sufficiently large $n$ with $n > p$ and some positive constants $C_1,C_2$ and $C_3$. 
	Since we assume $p = o(n)$ and $\nu_n^2 = O(n/p)$,
	\bea
	\Esg \bbE^{\pi} ( (\log \det \sg_n - \log \det \sg_0)^2 \mid \bfX_n) &\le& c\cdot \frac{p}{n}
	\eea
	for all sufficiently large $n$ and some constant $c>0$.
\end{proof}


\bibliographystyle{plain}
\bibliography{mmcov}

\end{document}